\newcommand{\ra}[1]{\renewcommand{\arraystretch}{#1}}
\newtheorem{theorem}{Theorem}
\newtheorem{lemma}[theorem]{Lemma}
\newtheorem{proposition}[theorem]{Proposition}
\newtheorem{definition}[theorem]{Definition}
\newtheorem{example}[theorem]{Example}
\newtheorem{result}[theorem]{Result}
\numberwithin{theorem}{section}
\numberwithin{equation}{section}
\numberwithin{table}{section}
\newtheorem{remark}[theorem]{Remark}
\newcommand{\bb}{\mathbb}
\newcommand{\Z}{\bb{Z}}
\newcommand{\cL}{\mathcal{L}}
\newcommand{\de}{\delta}
\newcommand{\la}{\lambda}
\newcommand{\pr}{\prime}
\newcommand{\sm}{\setminus}
\newcommand{\es}{\emptyset}
\newcommand{\ol}{\overline}
\newcommand{\lan}{\langle}
\newcommand{\ran}{\rangle}
\newcommand{\lf}{\lfloor}
\newcommand{\rf}{\rfloor}
\newcommand{\lc}{\lceil}
\newcommand{\rc}{\rceil}
\newcommand{\F}{\mathbb{F}}
\newcommand{\Fp}{\mathbb{F}_p}
\newcommand{\Fq}{\mathbb{F}_q}
\newcommand{\PGaL}{\text{P$\Gamma$L}}
\newcommand{\Spec}{\text{Spec}}
\long\def\symbolfootnote[#1]#2{\begingroup%
\def\thefootnote{\fnsymbol{footnote}}\footnote[#1]{#2}\endgroup}
\begin{document}

\title{Intersection distribution, non-hitting index and Kakeya sets in affine planes}
\author{Shuxing Li \and Alexander Pott}
\date{}
\maketitle

\symbolfootnote[0]{
S.~Li and A.~Pott are with the Faculty of Mathematics, Otto von Guericke University Magdeburg, 39106 Magdeburg, Germany (e-mail: shuxing\_li@sfu.ca, alexander.pott@ovgu.de).
}

\begin{abstract}
In this paper, we propose the concepts of intersection distribution and non-hitting index, which can be viewed from two related perspectives. The first one concerns a point set $S$ of size $q+1$ in the classical projective plane $PG(2,q)$, where the intersection distribution of $S$ indicates the intersection pattern between $S$ and the lines in $PG(2,q)$. The second one relates to a polynomial $f$ over a finite field $\Fq$, where the intersection distribution of $f$ records an overall distribution property of a collection of polynomials $\{f(x)+cx \mid c \in \Fq\}$. These two perspectives are closely related, in the sense that each polynomial produces a $(q+1)$-set in a canonical way and conversely, each $(q+1)$-set with certain property has a polynomial representation. Indeed, the intersection distribution provides a new angle to distinguish polynomials over finite fields, based on the geometric property of the corresponding $(q+1)$-sets. Among the intersection distribution, we identify a particularly interesting quantity named non-hitting index. For a point set $S$, its non-hitting index counts the number of lines in $PG(2,q)$ which do not hit $S$. For a polynomial $f$ over a finite field $\Fq$, its non-hitting index gives the summation of the sizes of $q$ value sets $\{f(x)+cx \mid x \in \Fq\}$, where $c \in \Fq$. We derive lower and upper bounds on the non-hitting index and show that the non-hitting index contains much information about the corresponding set and the polynomial. More precisely, using a geometric approach, we show that the non-hitting index is sufficient to characterize the corresponding point set and the polynomial, when it is very close to the lower and upper bounds. Moreover, we employ an algebraic approach to derive the non-hitting index and the intersection distribution of several families of point sets and polynomials. As an application, we consider the determination of the sizes of Kakeya sets in affine planes. The polynomial viewpoint of intersection distributions enable us to compute the size of a few families of Kakeya sets with nice algebraic properties. Finally, we describe the connection between these new concepts and various known results developed in different contexts and propose some future research problems.

\smallskip
\noindent \textbf{Keywords.} Affine plane, arc, equivalence of polynomial, finite field, graph of a function, Kakeya set, o-polynomial, permutation polynomial, point set in projective plane, power mapping, value set.

\noindent {{\bf Mathematics Subject Classification\/}: 05B25, 51E20, 11T06, 51E15.}
\end{abstract}

\section{Intersection distribution: two sides of a coin}\label{sec-1}

This paper proposes the concept of intersection distribution and investigates this concept from two related perspectives, namely a geometric viewpoint concerning point sets in the classical projective plane $PG(2,q)$, and an algebraic viewpoint concerning polynomials over finite fields. In this section, we introduce a series of definitions related to polynomials over finite fields and point sets in $PG(2,q)$. Then, we explain how these definitions can be connected from the viewpoint of the intersection distribution. These connections, together with the application to the Kakeya sets in affine planes, justify our motivation to study the intersection distribution.

First of all, we supply a list of definitions related to a polynomial $f$ over a finite field, after which some illustrative remarks are in order.

\begin{definition}\label{def-poly}
Let $f$ be a polynomial over $\Fq$.
\begin{itemize}
\item[(1)] For $0 \le i \le q$, define
$$
v_i(f)=|\{(a,b) \in \Fq^2 \mid \mbox{$f(x)-ax-b=0$ has $i$ solutions in $\Fq$} \}|.
$$
The sequence $(v_i(f))_{i=0}^{q}$ is the intersection distribution of $f$. The integer $v_0(f)$ is the non-hitting index of $f$.
\item[(2)] For $c \in \Fq$ and $0 \le i \le q$, define $M_i(f,c)$ to be the number of elements in $\Fq$, which occur $i$ times in the multiset $\{f(x)-cx \mid x \in \Fq\}$. The sequence $(M_i(f,c))_{i=0}^q$ is the multiplicity distribution of $f$ at $c$.
\item[(3)] For $c \in \Fq$, define $V_{f,c}=\{f(x)+cx \mid x \in \Fq\}$, which is the value set of the polynomial $f(x)+cx$. Define $N_f=\{ c \in \Fq \mid |V_{f,c}|=q \}$, namely, the set of elements $c \in \Fq$ such that $f(x)+cx$ is a permutation polynomial.
\item[(4)] Let $q$ be an even prime power. The polynomial $f$ is an o-polynomial, if $f$ is a permutation polynomial and $f(x)+cx$ is $2$-to-$1$ for each $c \in \Fq^*$.
\end{itemize}
\end{definition}

\begin{remark}\label{rem-polyint}
\quad
\begin{itemize}
\item[(1)] There are $q^2+q$ lines in the affine plane $AG(2,q)$. The intersection distribution of $f$ records the intersection pattern between the graph $\{(x,f(x)) \mid x \in \Fq\}$ and $q^2$ lines in $\Fq^2$, excluding the $q$ vertical lines $\{x=c \mid c \in \Fq\}$. We omit $q$ vertical lines to make the definition of intersection distribution more compact, and in fact, since each vertical line intersects the graph of $f$ in exactly one point, no information is lost by restricting to non-vertical lines. By definition, for each linear function $l$, the polynomials $f$ and $f+l$ have the same intersection distribution. Moreover, the non-hitting index of $f$ counts the number of $q^2$ non-vertical lines in $\Fq^2$ which do not hit the graph $\{(x,f(x)) \mid x \in \Fq \}$.
\item[(2)] The sequence $(M_i(f,c))_{i=0}^q$ records the multiplicities of elements in the value multiset $\{f(x)-cx \mid x \in \Fq\}$, where an element not in the multiset has multiplicity $0$. For a polynomial $f$ over $\Fq$ and $0 \le i \le q$, we have
    $$
    v_i(f)=\sum_{c \in \Fq}M_i(f,c).
    $$
    Hence, to compute the intersection distribution of $f$, it suffices to compute its multiplicity distribution at each element $c \in \Fq$. In particular,
    $$
    v_0(f)=\sum_{c \in \Fq}M_0(f,c)=\sum_{c \in \Fq} (q-|V_{f,c}|)=q^2-\sum_{c \in \Fq} |V_{f,c}|.
    $$
    Therefore, $v_0(f)$ records the summation of the sizes of the $q$ value sets $V_{f,c}$, where $c \in \Fq$.
\item[(3)] The o-polynomial only exists when $q$ is even. When $q$ is odd, as we shall see, $f(x)=x^2$ plays an analogous role as o-polynomial. For a detailed account of o-polynomials, please refer to {\rm \cite[Section 6]{CM}}.
\end{itemize}
\end{remark}

Now, we introduce a second viewpoint of intersection distribution related to point sets in $PG(2,q)$. Later, we shall see that the polynomial and the point set viewpoints are closely related. A point set of $PG(2,q)$ of size $k$ is called a $k$-set of $PG(2,q)$. Next, we supply a list of definitions related to point sets in $PG(2,q)$ and some remarks.

\begin{definition}\label{def-set}
Let $D$ be a point set in $PG(2,q)$ with $|D|>1$.
\begin{itemize}
\item[(1)] For $0 \le i \le q+1$, define $u_i(D)$ to be the number of lines in $PG(2,q)$, which intersect $D$ in exactly $i$ points. The sequence $(u_i(D))_{i=0}^{q+1}$ is the intersection distribution of $D$. The integer $u_0(D)$ is the non-hitting index of $D$. The largest integer $2 \le n \le q+1$, such that $u_n(D)>0$, is the degree of $D$.
\item[(2)] A point $P \in D$ is called an internal nucleus of $D$ if each line through $P$ meets $D$ in at most one more point. For a $(q+1)$-set $S$, a point $P \notin S$ is a nucleus to $S$, if each line through $P$ intersects $S$ in exactly one point.
\end{itemize}
\end{definition}

\begin{remark}
\quad
\begin{itemize}
\item[(1)] The intersection distribution of $D$ reflects the intersection pattern between the set $D$ and lines in $PG(2,q)$. The non-hitting index of $D$ counts the number of lines which do not hit $D$. Indeed, some configurations with special intersection distribution have been intensively studied, including:
\begin{itemize}
\item[(a)] If $D$ is a $k$-set of degree $n$ in $PG(2,q)$, then $D$ is called a $(k;n)$-arc in the literature {\rm\cite[Chapter 12]{Hirs}}. In particular, if $n=2$, then $D$ is called a $k$-arc, which is one of the most well-studied configurations in classical projective planes {\rm\cite[Chapters 9,10]{Hirs}}.
\item[(b)] If $u_0(D)=u_{q+1}(D)=0$, namely, the set $D$ meets every line and does not contain any, then $D$ is a blocking set in $PG(2,q)$ {\rm\cite[Chapter 13]{Hirs}}.
\end{itemize}
\item[(2)] A subset $D$ contains an internal nucleus only if $|D| \le q+2$.
\end{itemize}
\end{remark}

Having defined the intersection distribution of polynomials and point sets, the following result, which is a combination of Lemma~\ref{lem-polyrep} and Proposition~\ref{prop-intdispoly}, establishes the relation between them. We use $\lan (x,y,z) \ran$ to denote a point in $PG(2,q)$.

\begin{result}\label{res-polyset}
\begin{itemize}
\item[(1)] Each polynomial $f$ over $\Fq$ corresponds to a $(q+1)$-set
\begin{equation*}
S_f=\{ \lan (x,f(x),1) \ran \mid x \in \Fq \} \cup \{ \lan (0,1,0) \ran \}
\end{equation*}
in $PG(2,q)$, where $\lan (0,1,0) \ran$ is an internal nucleus of $S_f$.
\item[(2)] Let $S$ be a $(q+1)$-set in $PG(2,q)$ containing an internal nucleus. Then there exists a polynomial $f$ over $\Fq$, such that $$
    S:=S_f=\{\lan (x,f(x),1) \ran \mid x \in \Fq\} \cup \{ \lan (0,1,0) \ran \},
    $$
    where $\lan (0,1,0) \ran$ is an internal nucleus of $S_f$.
\item[(3)] The intersection distribution of $f$ implies that of $S_f$ and vice versa. In particular, $v_0(f)=u_0(S_f)$.
\end{itemize}
\end{result}

Before we proceed to illustrate the significance of the intersection distribution and the non-hitting index, we first introduce a new type of equivalence between polynomials, which relies on the $(q+1)$-sets derived from them.

\begin{definition}[projective equivalence]\label{def-projequiv}
Two polynomials $f$ and $f^{\pr}$ are projectively equivalent, if the two $(q+1)$-sets $S_f$ and $S_{f^{\pr}}$ are isomorphic, in the sense that there exists an automorphism of $PG(2,q)$, which is an element of $\PGaL(3,q)$, mapping $S_f$ into $S_{f^{\pr}}$.
\end{definition}

By Result~\ref{res-polyset}(3), the intersection distribution of a polynomial is an invariant under the projective equivalence. The following result, which follows from Propositions~\ref{prop-upperbound} and \ref{prop-intdispoly}, greatly motivates our investigation.

\begin{result}\label{res-nonhitting}
Let $S$ be a $(q+1)$-set in $PG(2,q)$ and $f$ a polynomial over $\Fq$. Then we have the following.
\begin{itemize}
\item[(1)] $0 \le u_0(S) \le\frac{q(q-1)}{2}$. Moreover, $u_0(S)=0$ if and only if $S$ is a line and $u_0(S)=\frac{q(q-1)}{2}$ if and only if $S$ is a $(q+1)$-arc.
\item[(2)] $v_0(f)=u_0(S_f) \ge q-1$. Moreover, $v_0(f)=q-1$ if and only if $f$ is a linear function.
\item[(3)] $v_0(f)=u_0(S_f) \le \frac{q(q-1)}{2}$. Moreover, $v_0(f)=u_0(S_f)=\frac{q(q-1)}{2}$ if and only if $S_f$ is a $(q+1)$-arc and one of the following holds.
    \begin{itemize}
      \item[(3a)] When $q$ is even, for exactly one element $c \in \Fq$, we have $f(x)-cx$ being an o-polynomial.
      \item[(3b)] When $q$ is odd, $f$ is projectively equivalent to $x^2$.
    \end{itemize}
\end{itemize}
\end{result}

\begin{remark}
Let $S$ be a $(q+1)$-set in $PG(2,q)$ and $f$ a polynomial over $\Fq$.
\begin{itemize}
\item[(1)] Recall that the degree of $S$ is an integer between $2$ and $q+1$. The non-hitting index $u_0(S)$ achieves the lower bound $0$ if and only if the point set $S$ forms a line, which has the maximum degree $q+1$. The non-hitting index $u_0(S)$ achieves the upper bound $\frac{q(q-1)}{2}$ if and only if the point set $S$ forms a $(q+1)$-arc, which has the minimum degree $2$. Therefore, the non-hitting index of a $(q+1)$-set indicates its distance to the aforementioned two extremal configurations.
\item[(2)] The non-hitting index $v_0(f)$ attains the minimal value $q-1$ if and only if $f$ is a linear function. The non-hitting index $v_0(f)$ attains the maximal value $\frac{q(q-1)}{2}$ if and only if $f$ is an o-polynomial up to adding a linear function when $q$ is even or $f$ is projectively equivalent to $x^2$ when $q$ is odd. Therefore, the non-hitting index of a polynomial indicates its distance to the aforementioned polynomials.
\item[(3)] We note that a Kakeya set in the affine plane $AG(2,q)$ can be viewed in a dual way, as a $(q+2)$-set in $PG(2,q)$ with an internal nucleus. We are greatly inspired by some recent work characterizing Kakeya sets with small and large sizes {\rm\cite{BDMS,BM,DM1}}, which have essentially characterized $(q+2)$-sets with an internal nucleus by their non-hitting index. A more detailed account is in Section~\ref{sec-4}.
\end{itemize}
\end{remark}

Now we explain how Result~\ref{res-nonhitting} inspired our investigation and describe the organization of the remaining sections. First of all, by definition, a $(q+1)$-arc is a $(q+1)$-set of degree $2$, that is, $u_i(D)=0$ for $3 \le i \le q+1$. By knowing this, we can actually determine the intersection distribution $(u_i(D))_{i=0}^{q+1}$ (see Proposition~\ref{prop-upperbound}). Instead of specifying the whole intersection distribution, Result~\ref{res-nonhitting}(3) characterizes a $(q+1)$-arc $S$ by its non-hitting index $u_0(S)=\frac{q(q-1)}{2}$, which achieves the upper bound.
Having $(q+1)$-arcs, which is one of the most interesting configurations in $PG(2,q)$, as a primary example, one may ask if there are any other $(q+1)$-sets which can also be characterized by their non-hitting index. This constitutes the main theme of Section~\ref{sec-2}.

Second, the non-hitting index $v_0(f)$ of a polynomial $f$ measures the distance from $f$ to the o-polynomial when $q$ is even, or to $x^2$ when $q$ is odd, and also its distance to a linear function. Therefore, non-hitting index offers a new perspective to study polynomials over finite fields. The estimate and computation of the non-hitting index, or more generally, the intersection distribution of polynomials, is the main purpose of Section~\ref{sec-3}.

Third, in Section~\ref{sec-4}, we present an application of the intersection distribution, which concerns Kakeya sets in affine planes. While computing the size of Kakeya sets is difficult in general, the results concerning intersection distributions in Section~\ref{sec-3} immediately lead to several families of Kakeya sets with known sizes.

Finally, in Section~\ref{sec-5}, we mention some work related to the intersection distribution and the non-hitting index. By observing these connections, we hope that the techniques in several papers can be applied to study the intersection distribution and the non-hitting index. We conclude that the intersection distribution and the non-hitting index deserve further investigation, where several open problems are proposed. To keep the main text focus on the conceptual ideas, most technical proofs are intentionally presented in the Appendices~\ref{sec-appendixA} and \ref{sec-appendixB}.

\section{Characterizing $(q+1)$-sets by their non-hitting indices}\label{sec-2}

Let $D$ be a point set in the classical projective plane $PG(2,q)$. A natural question is, to what extent, the interaction between $D$ and the lines of $PG(2,q)$, implies information about $D$? There has been intensive research along this direction, in terms of arcs \cite[Chapters 9,10,12]{Hirs}, ovals and hyperovals \cite[Chapter 8]{Hirs}, blocking sets \cite[Chapter 13]{Hirs}, to name just a few.
In this section, we investigate a few classes of $(q+1)$-sets $S$, which can be fully characterized by their non-hitting index $u_0(S)$. For most results in this section, proofs are presented in Appendix~\ref{sec-appendixA}.

Now we introduce some notation and terminology which will be used later. Let $P$ be a point and $\ell$ a line in $PG(2,q)$. Then we write $P \in \ell$ if $P$ is on the line $\ell$ and $P \notin \ell$ otherwise. Given $k$ points $P_i$, $1 \le i \le k$, if they are collinear, then we use $\ol{P_1P_2\cdots P_k}$ to denote the line passing through them. Let $D$ be a point set of $PG(2,q)$. A line $\ell$ in $PG(2,q)$ is called an external line (resp. a tangent line) to $D$, if $\ell$ does not intersect $D$ (intersects $D$ in one point). For $2 \le i \le q+1$, a line $\ell$ in $PG(2,q)$ is called an $i$-secant line to $D$, if $\ell$ intersects $D$ in $i$ points. More generally, a line $\ell$ in $PG(2,q)$ is called a secant line to $D$, if $\ell$ intersects $D$ in at least two points. By abusing of notation, we sometimes also use $\ell$ to denote the point set consisting of the $q+1$ points on the line $\ell$.

From now on, we always use $S$ to denote a $(q+1)$-set in $PG(2,q)$. Note that for simplicity, whenever referring to the intersection distribution, we only list the $u_i$'s with nonzero values and omit all the $u_i$'s equal to zero. The following proposition gives an upper bound on $u_0(S)$ and characterizes the $(q+1)$-set achieving this upper bound.

\begin{proposition}\label{prop-upperbound}
Let $S$ be a $(q+1)$-set in $PG(2,q)$. The following equations hold.
\begin{align*}
\sum_{i=0}^{q+1} u_i(S)&=q^2+q+1, \\
\sum_{i=1}^{q+1} iu_i(S)&=(q+1)^2, \\
\sum_{i=2}^{q+1} i(i-1)u_i(S)&=q(q+1),
\end{align*}
which implies $u_0(S)=\frac{q(q-1)}{2}-\sum_{i=3}^{q+1} \frac{(i-1)(i-2)}{2} u_i(S)$. Consequently, $u_0(S) \le \frac{q(q-1)}{2}$, where the equality holds if and only if $S$ is a $(q+1)$-arc, whose intersection distribution is
$$
u_0(S)=\frac{q(q-1)}{2}, \; u_1(S)=q+1, \; u_2(S)=\frac{q(q+1)}{2}.
$$
\end{proposition}
\begin{proof}
It suffices to prove the three equalities, which follow immediately from \cite[Lemma 12.1]{Hirs}.
\end{proof}

Moreover, we are going to show that when $u_0(S)$ is close to the upper bound $\frac{q(q-1)}{2}$, the set $S$ can also be characterized by its non-hitting index $u_0(S)$. These $(q+1)$-sets are described in the following examples, which are very close to $(q+1)$-arcs.

\begin{example}\label{exam-evenlarge}
Let $q$ be an even prime power and $S$ a $(q+1)$-set, which is not a $(q+1)$-arc and contains a $q$-arc $A$, where $S=A \cup \{Q\}$. When $q$ is even, each $q$-arc is contained in a $(q+1)$-arc $T$ {\rm \cite[Corollary 10.13]{Hirs}}, say, $T=A \cup \{P\}$. Through each point of $T$, there exists exactly one tangent line to $T$ and these $q+1$ tangent lines intersect in one point $O$. By the definition of $S$, we have $Q \notin \{O,P\}$. By {\rm\cite[Corollary 8.8]{Hirs}}, the number of external lines, tangent lines and $2$-secant lines to $T$ through $Q$ are  $\frac{q}{2}$, $1$ and $\frac{q}{2}$. Suppose the unique tangent line to $T$ through $P$ is $\ell$. Then we have the following two cases.
\begin{itemize}
\item[(1)] If $q>2$ and $Q \notin \ell$, then the number of tangent lines, $2$-secant lines and $3$-secant lines to $S$ through $Q$ are $\frac{q}{2}$, $2$ and $\frac{q}{2}-1$. It is routine to verify that
    $$
      u_0(S)=\frac{q(q-1)}{2}-\frac{q}{2}+1, \; u_1(S)=\frac{5q}{2}-2, \; u_2(S)=\frac{q(q-2)}{2}+3, \; u_3(S)=\frac{q}{2}-1.
    $$

\item[(2)] If $Q \in \ell$, then the number of tangent lines and $3$-secant lines to $S$ through $Q$ are $\frac{q}{2}+1$ and $\frac{q}{2}$. It is routine to verify that
    $$
      u_0(S)=\frac{q(q-1)}{2}-\frac{q}{2}, \; u_1(S)=\frac{5q}{2}+1, \; u_2(S)=\frac{q(q-2)}{2}, \; u_3(S)=\frac{q}{2}.
    $$
\end{itemize}
Note that when $q=2$, we must have $Q \in \ell$, and therefore, (1) is vacuous in this case.
\end{example}

\begin{example}\label{exam-oddlarge}
Let $q$ be an odd prime power and $S$ a $(q+1)$-set which is not a $(q+1)$-arc and contains a $q$-arc $A$, where $S=A \cup \{Q\}$. When $q$ is odd, each $q$-arc is contained in a $(q+1)$-arc $T$ {\rm \cite[Theorem 10.28]{Hirs}}, say, $T=A \cup \{P\}$. Noting that $Q \notin T$, by {\rm\cite[Table 8.2]{Hirs}}, either there are two tangent lines to $T$ through $Q$, which implies the number of external lines, tangent lines and $2$-secant lines to $T$ through $Q$ are $\frac{q-1}{2}$, $2$ and $\frac{q-1}{2}$, or there is no tangent line to $T$ through $Q$, which implies the number of external lines and $2$-secant lines to $T$ through $Q$ are both $\frac{q+1}{2}$. Suppose the unique tangent line to $T$ through $P$ is $\ell$. Then we have the following two cases.
\begin{itemize}
\item[(1)] If $q>3$, and there are two tangent lines to $T$ through $Q$ and neither of them is $\ell$, then the number of tangent lines, $2$-secant lines and $3$-secant lines to $S$ through $Q$ are $\frac{q-1}{2}$, $3$ and $\frac{q-3}{2}$. It is routine to verify that
    $$
      u_0(S)=\frac{q(q-1)}{2}-\frac{q-3}{2}, \; u_1(S)=\frac{5q-7}{2}, \; u_2(S)=\frac{q^2-2q+9}{2}, \; u_3(S)=\frac{q-3}{2}.
    $$
\item[(2)] If there are two tangent lines to $T$ through $Q$ and one of them is $\ell$, or there is no tangent line to $T$ through $Q$, then the number of tangent lines, $2$-secant lines and $3$-secant lines to $S$ through $Q$ are $\frac{q+1}{2}$, $1$ and $\frac{q-1}{2}$. It is routine to verify that
    $$
      u_0(S)=\frac{q(q-1)}{2}-\frac{q-1}{2}, \; u_1(S)=\frac{5q-1}{2}, \; u_2(S)=\frac{q^2-2q+3}{2}, \; u_3(S)=\frac{q-1}{2}.
    $$
\end{itemize}
Note that when $q=3$, we must have $Q \in \ell$, and therefore, (1) is vacuous in this case.
\end{example}

In the sense of Proposition~\ref{prop-upperbound}, the non-hitting index of a $(q+1)$-set $S$ measures how much $S$ resembles a $(q+1)$-arc in $PG(2,q)$. For instance, Examples~\ref{exam-evenlarge} and \ref{exam-oddlarge} indicate that when $S$ is one step away from a $(q+1)$-arc, $u_0(S)$ is close to the upper bound $\frac{q(q-1)}{2}$. In the following, we aim to give a more precise description of this phenomenon. For this purpose, we need to introduce more concepts. Let $A$ be a subset of $S$, then $A$ is an $S$-maximal arc \cite[p. 309]{BB}, if
\begin{itemize}
\item[(a)] $A$ is an arc in $PG(2,q)$,
\item[(b)] For each point $P \in S \setminus A$, the set $A \cup \{P\}$ is not an arc in $PG(2,q)$.
\end{itemize}
We note that the $S$-maximal arc may not be unique. For instance, consider a $4$-set $S$ in $PG(2,3)$, where three points of $S$ are on one line $\ell$ and the remaining one is not on $\ell$. Then, $S$ has three distinct $S$-maximal $3$-arcs. Each $S$-maximal arc gives a best possible local approximation of $S$ by using points forming an arc. When $S$ is a $(q+1)$-set which contains an $S$-maximal $q$-arc, the intersection distribution of $S$ has been determined in Examples~\ref{exam-evenlarge} and \ref{exam-oddlarge}.

The second concept concerns the interaction between points of $S \sm A$ and $A$. A point $P \in S \setminus A$ is called a pro-arc point of $A$ if there exists exactly one $2$-secant line to $A$ through $P$. For each pro-arc point $P$ of $A$, the set $A \cup \{P\}$ is nearly an arc, since every line, except the $2$-secant line through $P$, intersects $A \cup \{P\}$ in at most two points. This justifies the name of pro-arc point. Let $\cL:=\cL(A)$ be the set of all $2$-secant lines to $A$. Let $S$ be a $(q+1)$-set and $A$ an $S$-maximal arc, define a set $B:=B(S,A)$ satisfying the following two conditions:
\begin{itemize}
\item[(a)] $B \subset \{ P \in S \sm A \mid \mbox{$P$ is a pro-arc point to $A$} \}$,
\item[(b)] for each $\ell \in \cL$ which contains a pro-arc point to $A$, we have $|B \cap \ell|=1$.
\end{itemize}
We note that the set $B$ may not be unique. For instance, when a line $\ell \in \cL$ contains two pro-arc points, then $B$ may contain either of the two. On the other hand, for different choices of $B$, their sizes remain the same. This is because, by definition, the size of $B$ is equal to the number of $2$-secant lines to $A$ which contain at least one pro-arc point. Given a set $B$ defined as above, the set $A \cup B$ is a largest possible subset of $S$ containing $A$, so that $u_3(A \cup B)>0$ and $u_i(A \cup B)=0$ for each $4 \le i \le q+1$. Thus, the set $B$ indicates a way of expanding $A$ to a largest possible subset $A \cup B$ of $S$, so that $A \cup B$ is still close to an arc, in the sense that each line meets $A \cup B$ in at most three points. Therefore, we call $B$ a pro-arc set with respect to $S$ and $A$.

Since the non-hitting index $u_0(S)$ indicates the similarity between $S$ and a $(q+1)$-arc, one may expect a connection between $u_0(S)$, the size of an $S$-maximal arc $A$, and the size of a pro-arc set $B$ with respect to $S$ and $A$. Indeed, employing the idea of \cite[Lemma 1.1]{BB} and \cite[Lemma 3.2]{BDMS}, we have the following upper bounds on $u_0(S)$ related to the sizes of $A$ and $B$. Again, we note that most technical proofs of the results in this section are included in Appendix~\ref{sec-appendixA}.

\begin{lemma}\label{lem-upperboundstructure}
Let $S$ be a $(q+1)$-set and $A$ an $S$-maximal $k$-arc. Let $B$ be a pro-arc set of size $l$ with respect to $S$ and $A$, with $0 \le l \le q+1-k$. Then we have the following.
\begin{itemize}
\item[(1)] Suppose for each $P \in S \sm A$, there are at most $\la$ tangent lines to $A$ through $P$, where $\la \le k$. Then $u_0(S) \le \frac{q(q-1)}{2}-(q+1-k)\frac{k-\la}{2}$.
\item[(2)]  $u_0(S) \le \frac{q(q-1)}{2}-2(q+1)+2k+l$.
\end{itemize}
\end{lemma}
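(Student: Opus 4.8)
The plan is to reduce both inequalities to a single lower bound on the quantity $\Sigma := \sum_{i=3}^{q+1}\frac{(i-1)(i-2)}{2}u_i(S)$. Indeed, Proposition~\ref{prop-upperbound} already yields $u_0(S) = \frac{q(q-1)}{2} - \Sigma$, so it suffices to prove $\Sigma \ge (q+1-k)\frac{k-\la}{2}$ for part~(1) and $\Sigma \ge 2(q+1-k) - l$ for part~(2). The central device is to extract contributions to $\Sigma$ from the $2$-secant lines to $A$. For a $2$-secant line $\ell$ to $A$, let $m_\ell$ denote the number of points of $S\sm A$ lying on $\ell$; since $A$ is an arc we have $|A\cap\ell|=2$, so $\ell$ meets $S$ in exactly $2+m_\ell$ points and contributes $\binom{m_\ell+1}{2}$ to $\Sigma$. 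As every summand of $\Sigma$ is nonnegative, retaining only these lines gives $\Sigma \ge \sum_{\ell}\binom{m_\ell+1}{2}$, the sum running over all $2$-secants to $A$ (those with $m_\ell=0$ harmlessly contribute $0$).

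The bookkeeping is a double count. For $P\in S\sm A$ let $s_P$ and $t_P$ be the numbers of $2$-secant and tangent lines to $A$ through $P$. Counting the $k$ points of $A$ along the $q+1$ lines through $P$ gives $t_P + 2s_P = k$, whence $s_P = \frac{k-t_P}{2}$, and summing over $P$ yields $\sum_\ell m_\ell = \sum_{P\in S\sm A} s_P$. For part~(1) I would use only the crude estimate $\binom{m_\ell+1}{2}\ge m_\ell$ together with the hypothesis $t_P\le\la$, which forces $s_P\ge\frac{k-\la}{2}$; since $|S\sm A| = q+1-k$, this gives $\Sigma \ge \sum_\ell m_\ell = \sum_P s_P \ge (q+1-k)\frac{k-\la}{2}$, as required.

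For part~(2) the estimate $\binom{m_\ell+1}{2}\ge m_\ell$ is too lossy, and the main obstacle is to recover the \emph{exact} correction $l$ rather than the weaker quantity $p$ (the total number of pro-arc points). The plan is to keep the quadratic term by writing $\binom{m_\ell+1}{2} = m_\ell + \binom{m_\ell}{2}$. First, $S$-maximality of $A$ forces $s_P\ge1$ for every $P\in S\sm A$ (else $A\cup\{P\}$ would be an arc), while non-pro-arc points satisfy $s_P\ge2$ by definition; writing $p$ and $n$ for the numbers of pro-arc and non-pro-arc points, with $p+n=q+1-k$, this gives $\sum_\ell m_\ell = \sum_P s_P \ge p+2n$. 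Second, each pro-arc point lies on a \emph{unique} $2$-secant to $A$, so the pro-arc points distribute among the $l$ lines of $\cL$ carrying one; a line carrying $j$ pro-arc points has $m_\ell\ge j$ and contributes $\binom{m_\ell}{2}\ge\binom{j}{2}\ge j-1$, and summing over these $l$ lines gives $\sum_\ell\binom{m_\ell}{2}\ge p-l$. Combining the two parts, $\Sigma \ge (p+2n)+(p-l) = 2(q+1-k)-l$, which is exactly the bound sought.

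The delicate point is entirely this last step: the quadratic term $\binom{m_\ell}{2}$ evaluated on lines carrying several pro-arc points is precisely what upgrades the naive bound (which only detects $p$) to the sharp one involving $l$, and one must verify that the attachment of each pro-arc point to a unique $2$-secant of $A$ gives $\sum_\ell(j_\ell-1) = p-l$. I expect part~(1) to be routine; the care is concentrated in the quadratic term used for part~(2) and in confirming that passing from the chosen subfamily of $2$-secants back to $\Sigma$ involves no double counting, which is automatic since distinct lines give distinct summands of $\Sigma$ and all omitted summands are nonnegative.
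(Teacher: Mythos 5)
Your argument is correct; all the supporting facts you invoke ($t_P+2s_P=k$ for $P\notin A$ since $A$ is an arc, $s_P\ge 1$ from $S$-maximality, $s_P\ge 2$ for non-pro-arc points, each pro-arc point lying on a unique $2$-secant to $A$ so that the $p$ pro-arc points partition among exactly $l$ lines) are valid, and the final arithmetic in both parts matches the stated bounds. However, your route is organized differently from the paper's. The paper orders the points of $S$ so that $A$ comes first and $B$ immediately after, adds the points one at a time, and measures for each new point $P_j$ the excess $m_j$ of external lines to the partial set $S_{j-1}$ beyond the generic count $q+1-j$; this yields $u_0(S)=\frac{q(q-1)}{2}-\sum_{j\ge k}m_j$, and the two parts follow from the per-point estimates $m_j\ge\frac{k-\la}{2}$ (obtained from the same tangent/secant count of $A$ through $P_j$ that you use) and $m_j\ge 1$ on $B$, $m_j\ge 2$ on $S\sm(A\cup B)$. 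You instead work line-by-line: you start from the identity $u_0(S)=\frac{q(q-1)}{2}-\sum_{i\ge 3}\frac{(i-1)(i-2)}{2}u_i(S)$ of Proposition~\ref{prop-upperbound} and lower-bound the sum by the contributions $\binom{m_\ell+1}{2}$ of the $2$-secants to $A$ alone. The two bookkeepings are dual to one another (summing the paper's $m_j$ over the points of a fixed line recovers exactly the term $\frac{(i-1)(i-2)}{2}$ for that line), but your split $\binom{m_\ell+1}{2}=m_\ell+\binom{m_\ell}{2}$ has the merit of making explicit \emph{why} the correction in part (2) is $l$ rather than the number $p$ of pro-arc points: a $2$-secant carrying $j$ pro-arc points donates $\binom{j}{2}\ge j-1$ through its quadratic term. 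In the paper this same gain is hidden in the choice $|B\cap\ell|=1$ together with the observation that a point of $S\sm(A\cup B)$ sitting on the unique $2$-secant through a point of $B$ still satisfies $m_j\ge 2$. The paper's sequential formulation, on the other hand, is the one reused verbatim in the finer case analyses of Appendix~\ref{sec-appendixA} (e.g.\ Lemma~\ref{lem-evensmall}), which is presumably why it was chosen.
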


In order to apply Lemma~\ref{lem-upperboundstructure}(1), one needs an estimate on the number of tangent lines to $A$ through each $P \in S \sm A$. To exploit Lemma~\ref{lem-upperboundstructure}(2), an estimate on the size of $B$ is required. The following lemma provides the bounds in need.

\begin{lemma}\label{lem-numtangent}
Let $S$ be a $(q+1)$-set and $A$ an $S$-maximal $k$-arc such that $k<q+1$. Let $P$ be a point of $S \setminus A$. Let $B$ be a pro-arc set of size $l$ with respect to $S$ and $A$. Then we have the following.
\begin{itemize}
\item[(1)] The number of tangent lines to $A$ through $P$ is at most $k-2$.
\item[(2)] If $q$ is even and $k > \frac{q}{2}+1$, the number of tangent lines to $A$ through $P$ is at most $q+2-k$.
\item[(3)] If $q$ is odd and $k > \frac{2q+4}{3}$, the number of tangent lines to $A$ through $P$ is at most $2(q+2-k)$.
\item[(4)] If through each point of $A$, there exists at most one $2$-secant line to $A$, which contains one point of $B$, then $l \le \lf \frac{k}{2} \rf$.
\end{itemize}
\end{lemma}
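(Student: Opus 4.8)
The plan is to handle the four parts by different means, using throughout the single combinatorial consequence of $S$-maximality: since $A$ is an $S$-maximal arc and $P \in S \sm A$, the set $A \cup \{P\}$ is not an arc, and as $A$ itself contains no collinear triple, there must be at least one $2$-secant to $A$ through $P$.

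For part (1) I would argue by a one-line pencil count. Fix $P \in S \sm A$. Because $A$ is an arc, every line through $P$ is external, tangent, or a $2$-secant to $A$. Writing $t$ for the number of tangents and $\sigma$ for the number of $2$-secants through $P$ and distributing the $k$ points of $A$ over the pencil at $P$ gives $t + 2\sigma = k$. The remark above forces $\sigma \ge 1$, whence $t = k - 2\sigma \le k - 2$, which is exactly (1).

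For parts (2) and (3), the first step is the observation that the tangents through $P$ cut out a subarc: if $Q_1,\dots,Q_t$ are the points of $A$ lying on the tangents through $P$, then each $\ol{PQ_i}$ meets $A$ only in $Q_i$, so no line through $P$ contains two of the $Q_i$, and hence $\{Q_1,\dots,Q_t\} \cup \{P\}$ is a $(t+1)$-arc with $\{Q_1,\dots,Q_t\}$ a $t$-subarc of $A$. However, bounding $t$ by the maximum arc size only yields $t \le q$ (or $q-1$), which is far too weak; the near-hyperoval examples (a $q$-arc with $q$ even, whose two completing points each carry $q$ tangents) show that the sharp bounds $q+2-k$ and $2(q+2-k)$ cannot come from incidence counting alone and must use the algebraic structure of large arcs. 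The plan is therefore to invoke Segre's Lemma of Tangents and the associated tangent envelope from \cite[Chapter 10]{Hirs}: for $k$ above the stated thresholds the tangents to $A$ are the lines of an algebraic envelope whose class is $q+2-k$ when $q$ is even and $2(q+2-k)$ when $q$ is odd, so the number of tangents through any point is at most that class. The thresholds $k > \tfrac{q}{2}+1$ and $k > \tfrac{2q+4}{3}$ are precisely what guarantees that the envelope has the asserted class, while the condition $\sigma \ge 1$ coming from $S$-maximality is what excludes the exceptional nucleus-type points at which the tangent count blows up. I expect correctly setting up this envelope and verifying the threshold hypotheses to be the main obstacle of the whole lemma.

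For part (4) I would use a matching argument. Every point of $B$ is a pro-arc point, so it lies on exactly one $2$-secant to $A$, and by the defining condition $|B \cap \ell| = 1$ for each $2$-secant $\ell$ meeting $B$; hence the $l$ points of $B$ correspond bijectively to $l$ distinct $2$-secants $\ell_1,\dots,\ell_l$ of $A$, each carrying two points of $A$. Forming the graph on vertex set $A$ whose edges are the pairs $A \cap \ell_j$, the hypothesis that through each point of $A$ there is at most one $2$-secant containing a point of $B$ says exactly that every vertex has degree at most one, so these edges form a matching on the $k$ points of $A$. A matching has at most $\lfloor k/2 \rfloor$ edges, giving $l \le \lf \frac{k}{2} \rf$.
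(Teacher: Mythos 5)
Your proposal is correct and follows essentially the same route as the paper: part (1) via the $2$-secant to $A$ through $P$ forced by $S$-maximality, part (4) via the observation that the $l$ distinct $2$-secants meeting $B$ pick out $2l$ distinct points of $A$, and parts (2) and (3) by deferring to the classical tangent-envelope results of \cite[Chapter 10]{Hirs} (the paper cites Theorems 10.1 and 10.4 there, together with the proofs of Theorems 1.2 and 1.3 of \cite{BB}). The only minor caveat is that in (2) and (3) you attribute the thresholds on $k$ to the existence of an envelope of the stated class and the condition $\sigma \ge 1$ to excluding exceptional points, whereas the envelope of that class exists for any $k$-arc and it is the lower bound on $k$ that rules out a pencil being contained in it; since the paper likewise gives no detail beyond the citation, this does not affect the comparison.
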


Combining Lemmas~\ref{lem-upperboundstructure} and \ref{lem-numtangent}, we have the following upper bounds on $u_0(S)$ which only involves the size of $S$-maximal $k$-arcs.

\begin{lemma}\label{lem-upperboundmaxarc}
Let $S$ be a $(q+1)$-set containing an $S$-maximal $k$-arc $A$ with $2 \le k \le q$. Then the following holds.
\begin{itemize}
\item[(1)] When $q$ is even, we have
\begin{equation*}
  u_0(S) \le \begin{cases}
    \frac{q(q-1)}{2}-(q+1-k) & \mbox{if $2 \le k < \frac{q}{2}+2$,} \\
    \frac{q(q-1)}{2}-(q+1-k)(k-\frac{q+2}{2}) & \mbox{if $\frac{q}{2}+2 \le k \le q$.}
  \end{cases}
\end{equation*}
\item[(2)] When $q$ is odd, we have
\begin{equation*}
  u_0(S) \le \begin{cases}
    \frac{q(q-1)}{2}-(q+1-k) & \mbox{if $2 \le k < \frac{2q+6}{3}$,} \\
    \frac{q(q-1)}{2}-\frac{3}{2}(q+1-k)(k-\frac{2q+4}{3}) & \mbox{if $\frac{2q+6}{3} \le k \le q$.}
  \end{cases}
\end{equation*}
\end{itemize}
\end{lemma}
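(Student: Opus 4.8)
The plan is to read Lemma~\ref{lem-upperboundmaxarc} off directly from the two preceding lemmas. Lemma~\ref{lem-upperboundstructure}(1) converts a uniform bound $\la$ on the number of tangent lines to $A$ through points of $S\sm A$ into the estimate $u_0(S)\le \frac{q(q-1)}{2}-(q+1-k)\frac{k-\la}{2}$, while Lemma~\ref{lem-numtangent} supplies exactly such bounds on $\la$ in the relevant ranges of $k$. So the whole proof amounts to feeding the correct value of $\la$ into this formula and simplifying, with the two subcases of each part corresponding to the two regimes in which different tangent-line estimates are available.

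For the first subcase (the small-$k$ range in both parts (1) and (2)), I would take $\la=k-2$, which holds for every $P\in S\sm A$ by Lemma~\ref{lem-numtangent}(1) and trivially satisfies the hypothesis $\la\le k$ of Lemma~\ref{lem-upperboundstructure}(1). Then $\frac{k-\la}{2}=1$, so Lemma~\ref{lem-upperboundstructure}(1) yields $u_0(S)\le \frac{q(q-1)}{2}-(q+1-k)$, which is precisely the claimed bound. This step uses no hypothesis beyond $2\le k\le q$, and hence already establishes the first line of both (1) and (2).

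For the second subcase I would invoke the sharper counts. When $q$ is even and $k\ge \frac{q}{2}+2$ we have $k>\frac{q}{2}+1$, so Lemma~\ref{lem-numtangent}(2) gives $\la=q+2-k$; the side condition $\la\le k$ reduces to $k\ge \frac{q+2}{2}$, which holds. Substituting, $\frac{k-\la}{2}=\frac{2k-q-2}{2}=k-\frac{q+2}{2}$, and Lemma~\ref{lem-upperboundstructure}(1) produces the stated even-$q$ bound. When $q$ is odd and $k\ge \frac{2q+6}{3}$ we have $k>\frac{2q+4}{3}$, so Lemma~\ref{lem-numtangent}(3) gives $\la=2(q+2-k)$; here $\la\le k$ reduces to $k\ge \frac{2q+4}{3}$, which again holds, and $\frac{k-\la}{2}=\frac{3k-2q-4}{2}=\frac{3}{2}\bigl(k-\frac{2q+4}{3}\bigr)$, yielding the odd-$q$ bound.

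The only genuine care required—and the closest thing to an obstacle—is to verify that the thresholds defining the two subcases line up precisely with the validity ranges of the tangent-line estimates (so that the sharper estimate is available exactly where it is invoked) and that the side condition $\la\le k$ of Lemma~\ref{lem-upperboundstructure}(1) is met throughout each range. Both reduce to the elementary inequalities checked above. All the substantive geometric content has already been absorbed into Lemmas~\ref{lem-upperboundstructure} and \ref{lem-numtangent}, so no further argument in $PG(2,q)$ is needed at this stage.
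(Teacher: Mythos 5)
Your proposal is correct and follows essentially the same route as the paper: the authors likewise combine Lemma~\ref{lem-numtangent}(1)--(3) with Lemma~\ref{lem-upperboundstructure}(1), splitting at $k=\frac{q}{2}+2$ (resp.\ $k=\frac{2q+6}{3}$) so that the sharper tangent-line count is used exactly where it is available. Your explicit verification of the side condition $\la\le k$ and of the threshold alignment is a slightly more careful write-up of the same argument.
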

\begin{proof}
Suppose that $q$ is even. Then by Lemma~\ref{lem-numtangent}(1)(2), for each $P \in S \sm A$, the number of tangent lines to $A$ through $P$ is at most $\min\{k-2,q+2-k\}$. By dividing into two cases $2 \le k < \frac{q}{2}+2$ and $\frac{q}{2}+2 \le k \le q$ and applying Lemma~\ref{lem-upperboundstructure}(1), we complete the proof of (1). The proof of (2) is analogous to that of (1).
\end{proof}

\begin{remark}
The upper bounds in Lemma~\ref{lem-upperboundmaxarc}(1)(2) consider the interaction between the $S$-maximal $k$-arc $A$ and the points of $S \sm A$. On the other hand, they do not take the internal relations among the points of $S \sm A$ into consideration. Hence, the two bounds are more favorable when the size $k$ of the $S$-maximal arc is close to $q$.
\end{remark}

Now we are ready to present our main theorems aiming to characterize $S$ when $u_0(S)$ is close to the upper bound $\frac{q(q-1)}{2}$. The following theorem concerns the $q$ even case, where we achieve a full description of the second and third largest value of $u_0(S)$. Given two integers $a \le b$, we use $[a,b]$ to denote the set of integers $\{x \in \Z \mid a \le x \le b\}$. In the case that $a>b$, we define $[a,b]$ to be the empty set.

\begin{theorem}\label{thm-evenupper}
Let $q$ be an even prime power. Let $S$ be a $(q+1)$-set which is not a $(q+1)$-arc. Then $u_0(S) \le \frac{q(q-1)}{2}-\frac{q}{2}+1$. Moreover, we have the following.
\begin{itemize}
\item[(1)] When $q \ge 16$, we have $u_0(S)=\frac{q(q-1)}{2}-\frac{q}{2}+1$ if and only if $S$ is of the form in Example~\ref{exam-evenlarge}(1).
\item[(2)] When $q \ge 16$, we have $u_0(S)=\frac{q(q-1)}{2}-\frac{q}{2}$ if and only if $S$ is of the form in Example~\ref{exam-evenlarge}(2).
\item[(3)] When $q \in \{2,4,8\}$, the $(q+1)$-sets $S$ satisfying $u_0(S) \in \{\frac{q(q-1)}{2}-\frac{q}{2}, \frac{q(q-1)}{2}-\frac{q}{2}+1\}$ are listed in Lemma~\ref{lem-evensmall}.
\end{itemize}
\end{theorem}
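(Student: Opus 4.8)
The plan is to prove that any near-extremal $S$ must contain an $S$-maximal $q$-arc, which places it into the exact configuration of Example~\ref{exam-evenlarge}, and then read off the admissible values of $u_0(S)$ from that example. First I would dispose of the global inequality $u_0(S)\le\frac{q(q-1)}{2}-\frac{q}{2}+1$ using only Lemma~\ref{lem-upperboundmaxarc}(1). Since $S$ is not a $(q+1)$-arc, any $S$-maximal $k$-arc $A$ has $2\le k\le q$, and I optimize the two branches of the bound in $k$. The branch on $2\le k<\frac{q}{2}+2$ is increasing, with maximum $\frac{q(q-1)}{2}-\frac{q}{2}$ at $k=\frac{q}{2}+1$; after the substitution $j=q+1-k$ the branch on $\frac{q}{2}+2\le k\le q$ becomes $\frac{q(q-1)}{2}-\frac{1}{2}j(q-2j)$, where $\frac{1}{2}j(q-2j)$ is a downward parabola in $j$ whose minimum over $1\le j\le\frac{q}{2}-1$ is attained at both endpoints $j=1$ (so $k=q$) and $j=\frac{q}{2}-1$ (so $k=\frac{q}{2}+2$), each giving $\frac{q(q-1)}{2}-\frac{q}{2}+1$. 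The maximum over both branches yields the claimed bound for every even $q$.

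For the characterizations, the crux is to show that when $q\ge 16$ and $u_0(S)\ge\frac{q(q-1)}{2}-\frac{q}{2}$, every $S$-maximal arc has size exactly $q$. The same optimization shows that for $q\ge 16$ the only $k$ for which the Lemma~\ref{lem-upperboundmaxarc}(1) bound even reaches $\frac{q(q-1)}{2}-\frac{q}{2}$ are $k\in\{\frac{q}{2}+1,\frac{q}{2}+2,q\}$, and only $k\in\{\frac{q}{2}+2,q\}$ reach $\frac{q(q-1)}{2}-\frac{q}{2}+1$; here $q>8$ is exactly what forces $\frac{1}{2}j(q-2j)>\frac{q}{2}$ already at $j=2$, opening a gap at every interior $k$. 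To eliminate the two medium values I would switch to the second structural bound, Lemma~\ref{lem-upperboundstructure}(2): forcing $u_0(S)\ge\frac{q(q-1)}{2}-\frac{q}{2}$ makes the pro-arc set large, namely $l\ge\frac{3q}{2}+2-2k$, which gives $l=\frac{q}{2}$ for $k=\frac{q}{2}+1$ and $l\ge\frac{q}{2}-2$ for $k=\frac{q}{2}+2$. Against this, Lemma~\ref{lem-numtangent}(4) caps $l$ at $\lfloor k/2\rfloor\approx\frac{q}{4}$; for $q\ge 16$ these two estimates collide, so no such medium arc can occur and $k=q$ is forced.

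With $k=q$ in hand, $S=A\cup\{Q\}$ for an $S$-maximal $q$-arc $A$, which is precisely the setting of Example~\ref{exam-evenlarge}. Its two sub-cases, distinguished by whether $Q$ lies on the tangent $\ell$ to the completing $(q+1)$-arc at its extra point, compute $u_0(S)=\frac{q(q-1)}{2}-\frac{q}{2}+1$ and $u_0(S)=\frac{q(q-1)}{2}-\frac{q}{2}$ respectively. These computations give the backward implications of (1) and (2) directly, and since the two values are distinct they also give the forward implications. The exceptional orders $q\in\{2,4,8\}$ are treated by hand in Lemma~\ref{lem-evensmall}, since there the medium-arc elimination breaks down: already for $q=8$ one has $\frac{1}{2}j(q-2j)=\frac{q}{2}$ at $j=2$, so the first bound no longer separates $k=\frac{q}{2}+2$ from $k=q$ and a finite direct analysis is required.

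I expect the genuine obstacle to be the elimination of the medium arcs $k\in\{\frac{q}{2}+1,\frac{q}{2}+2\}$. The first bound is honestly loose there, returning the same value as at $k=q$, so the argument must pass through the pro-arc set and, crucially, verify that the hypothesis of Lemma~\ref{lem-numtangent}(4) is actually met in these near-extremal configurations, i.e. that the abundance of tangents and $2$-secants through $S\sm A$ forced by equality does not route two marked $2$-secants through a single point of $A$. Making the two incompatible estimates on $l$ bite is where the threshold $q\ge 16$ is consumed, and it is the delicate heart of the proof.
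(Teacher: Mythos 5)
Your outline is the paper's argument almost exactly: optimize Lemma~\ref{lem-upperboundmaxarc}(1) over $k$ to get the global bound and to isolate $k\in\{\frac{q}{2}+1,\frac{q}{2}+2,q\}$ as the only sizes compatible with the top two values of $u_0(S)$, settle $k=q$ via Example~\ref{exam-evenlarge}, and kill the medium sizes by playing the lower bound $l\ge \frac{3q}{2}+2-2k$ from Lemma~\ref{lem-upperboundstructure}(2) against the cap $l\le\lfloor k/2\rfloor$ from Lemma~\ref{lem-numtangent}(4); your numerics for where the two branches peak and where the collision needs $q\ge 16$ are all correct. However, the one step you explicitly leave open --- verifying that the hypothesis of Lemma~\ref{lem-numtangent}(4) holds, i.e.\ that no point of $A$ carries two $2$-secants to $A$ each containing a point of $B$ --- is a genuine gap as written: without it the cap $l\le\lfloor k/2\rfloor$ is simply unavailable, and the collision argument does not close. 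You correctly diagnose this as the delicate heart, but diagnosis is not a proof.

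The paper closes this gap not by verifying the hypothesis directly but by showing that its failure is also fatal. If for $k=\frac{q}{2}+2$ there were a point $Q\in A$ and $P_1,P_2\in B$ with $\ol{P_1Q}$ and $\ol{P_2Q}$ both $2$-secants to $A$, then $(A\sm\{Q\})\cup\{P_1,P_2\}$ is a $(\frac{q}{2}+3)$-arc contained in $S$, so $S$ admits an $S$-maximal $k^{\pr}$-arc with $k^{\pr}\ge\frac{q}{2}+3$; by the same case analysis of Lemma~\ref{lem-upperboundmaxarc}(1), either $k^{\pr}=q$ (and we are back in the Example~\ref{exam-evenlarge} case) or $u_0(S)\le\frac{q(q-1)}{2}-\frac{q}{2}-1$, a contradiction. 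Hence the hypothesis may be assumed, the cap applies, and $\frac{q}{2}-2\le l\le\frac{q}{4}+1$ forces $q\le 12$. The case $k=\frac{q}{2}+1$ is handled by the same arc-swap upgrade to $k^{\pr}\ge\frac{q}{2}+2$ (rather than by your direct collision, which would again need the unverified hypothesis). So your route is the right one, but it needs this dichotomy --- ``either the hypothesis of Lemma~\ref{lem-numtangent}(4) holds, or a strictly larger arc exists and the first bound re-enters'' --- to be a complete proof.
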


When $q$ is odd, the following theorem provides partial information about the second largest value of $u_0(S)$.

\begin{theorem}\label{thm-oddupper}
Let $q$ be an odd prime power. Let $S$ be a $(q+1)$-set and which is not a $(q+1)$-arc. Then we have the following.
\begin{equation}\label{eqn-oddupperbound}
   u_0(S) \le \begin{cases}
                  \frac{q(q-1)}{2}-\frac{q-3}{3} & \mbox{if $q \equiv 0 \bmod 3$ and $q \ge 9$,} \\
                  \frac{q(q-1)}{2}-\frac{q-1}{3} & \mbox{if $q \equiv 1 \bmod 3$ and $q \ge 13$,} \\
                  \frac{q(q-1)}{2}-\frac{q-2}{3} & \mbox{if $q \equiv 2 \bmod 3$ and $q \ge 11$,}
               \end{cases}
\end{equation}
where the above equality holds only if
$$
\begin{cases}
  \mbox{$S$ contains an $S$-maximal $\frac{2q+6}{3}$-arc} & \mbox{if $q \equiv 0 \bmod 3$ and $q \ge 9$,} \\
  \mbox{$S$ contains an $S$-maximal $\frac{2q+4}{3}$-arc} & \mbox{if $q \equiv 1 \bmod 3$ and $q \ge 13$,} \\
  \mbox{$S$ contains an $S$-maximal $\frac{2q+5}{3}$-arc} & \mbox{if $q \equiv 2 \bmod 3$ and $q \ge 11$.}
\end{cases}
$$
Moreover, when $q \in \{3,5,7\}$, the second largest value of $u_0(S)$ is described in Lemma~\ref{lem-oddsmall}.
\end{theorem}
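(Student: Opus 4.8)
The plan is to reduce everything to the piecewise upper bound of Lemma~\ref{lem-upperboundmaxarc}(2) and then carry out a careful \emph{integer} optimization. Since $S$ is a $(q+1)$-set that is not a $(q+1)$-arc, $S$ itself is not an arc, so any arc inside $S$ that is maximal with respect to inclusion has size at most $q$; thus $S$ contains an $S$-maximal $k$-arc $A$ with $2 \le k \le q$. Applying Lemma~\ref{lem-upperboundmaxarc}(2) to this $A$ gives $u_0(S) \le \frac{q(q-1)}{2} + g(k)$, where $g(k) = -(q+1-k)$ for $2 \le k < \frac{2q+6}{3}$ and $g(k) = -\frac{3}{2}(q+1-k)\bigl(k - \frac{2q+4}{3}\bigr)$ for $\frac{2q+6}{3} \le k \le q$. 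Hence the universal bound in \eqref{eqn-oddupperbound} amounts to maximizing $g$ over the integers $k \in [2,q]$, while the structural ``only if'' statement will come from the \emph{uniqueness} of the maximizer.

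First I would study the real relaxation $\tilde g$. On the first branch $\tilde g$ is linear with slope $1$, hence increasing; at the junction $x^\ast=\frac{2q+6}{3}$ one checks that $\tilde g$ is continuous with value $-\frac{q-3}{3}$, that the derivative of the second branch there is negative (for $q\ge 9$), and that on the second branch $\tilde g$ dips to a minimum and rises again only to a value $g(q)=-\frac{q-4}{2}$ at $x=q$ that stays below $-\frac{q-3}{3}$ (this comparison needs $q\ge 6$). Therefore $\tilde g$ attains its global maximum $-\frac{q-3}{3}$ at the single point $x^\ast=\frac{2q+6}{3}$. The integer maximizer is then governed by whether $x^\ast\in\Z$: when $q\equiv 0\bmod 3$ the point $x^\ast=\frac{2q+6}{3}$ is itself an integer and the integer maximum equals $-\frac{q-3}{3}$; when $q\equiv 1$ or $2\bmod 3$ the maximum is attained at $\lfloor x^\ast\rfloor$, namely at $k=\frac{2q+4}{3}$ with value $-\frac{q-1}{3}$, and at $k=\frac{2q+5}{3}$ with value $-\frac{q-2}{3}$, respectively. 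Evaluating $g$ at these integers and at their immediate neighbours reproduces exactly the three bounds in \eqref{eqn-oddupperbound} and identifies the argmax $k^\ast$ in each residue class.

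For the structural statement, note that the bound $u_0(S)\le\frac{q(q-1)}{2}+g(k_A)$ holds for \emph{every} $S$-maximal arc $A$ of size $k_A$ inside $S$. If $u_0(S)$ meets the bound in \eqref{eqn-oddupperbound}, then $g(k_A)$ equals the maximum of $g$ over $[2,q]$, and since the residue-by-residue comparison shows the integer maximizer $k^\ast$ is \emph{unique}, we conclude $k_A=k^\ast$; that is, $S$ contains an $S$-maximal arc of the stated size. This is precisely where the hypotheses $q\ge 9$, $q\ge 13$, $q\ge 11$ are used: they keep $k^\ast\in[2,q]$ and make the two integers straddling $x^\ast$ (together with the endpoint $k=q$) strictly worse than $k^\ast$. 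The small cases $q\in\{3,5,7\}$ fall outside these ranges---there the threshold hypotheses of Lemma~\ref{lem-numtangent}(3) become vacuous and the two regimes of $g$ collapse---so they are treated separately in Lemma~\ref{lem-oddsmall}. I expect the main obstacle to be exactly this integer optimization at the boundary between the two regimes of $g$: the maximum sits right at, or just below, the kink $\frac{2q+6}{3}$, so pinning down both the exact value and the uniqueness of the maximizer requires comparing $g$ at the two integers straddling $x^\ast$ and at $k=q$ separately in each residue class, with the stated lower bounds on $q$ being precisely the thresholds that render these comparisons strict.
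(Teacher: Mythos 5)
Your proposal is correct and follows essentially the same route as the paper: both reduce to the two-branch bound of Lemma~\ref{lem-upperboundmaxarc}(2), optimize over the integer size $k$ of an $S$-maximal arc separately in each residue class of $q$ modulo $3$, and obtain the structural conclusion from the uniqueness of the maximizing $k$ (the paper's displays \eqref{eqn-upperbound1} and \eqref{eqn-upperbound2} are precisely your branch-by-branch maxima, modulo the paper additionally rounding the second-branch value $-\frac{q-4}{2}$ to $-\frac{q-3}{2}$ via integrality of $u_0(S)$, which is immaterial for the stated ranges of $q$). The small cases $q\in\{3,5,7\}$ are deferred to Lemma~\ref{lem-oddsmall} in both treatments.
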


When the set $S$ satisfies some additional conditions, the upper bounds on $u_0(S)$ in Theorem~\ref{thm-oddupper} can be further improved.

\begin{proposition}\label{prop-improvedupper}
Let $q$ be an odd prime power. Let $S$ be a $(q+1)$-set which is not a $(q+1)$-arc. Then we have the following.
\begin{itemize}
\item[(1)] If $S$ contains an internal nucleus and has a nucleus, then $u_0(S) \le \frac{q(q-1)}{2}-\frac{q-1}{2}$, where the equality holds if and only if $S$ is of the form in Example~\ref{exam-oddlarge}(2).
\item[(2)] If $S$ contains no internal nucleus and has a nucleus, then $u_0(S) < \frac{q(q-1)}{2}-\frac{q-1}{2}$.
\end{itemize}
\end{proposition}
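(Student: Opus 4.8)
The plan is to isolate the role of the nucleus and reduce both parts to a single inequality together with an equality analysis. Concretely, I will prove that \emph{every} $(q+1)$-set $S$ with a nucleus $N$ (and $S$ not a $(q+1)$-arc) satisfies $u_0(S)\le \frac{q(q-1)}{2}-\frac{q-1}{2}$, and that equality forces $S$ to be of the form in Example~\ref{exam-oddlarge}(2). Since a set of this form always carries an internal nucleus (exactly one point of its $S$-maximal $q$-arc lies off every $3$-secant), part~(1) is then immediate, and part~(2) follows at once: a set with no internal nucleus cannot realize the equality case, so the inequality is strict. By Proposition~\ref{prop-upperbound} the bound $u_0(S)\le \frac{q(q-1)}{2}-\frac{q-1}{2}$ is equivalent to
\[
\sum_{i=3}^{q+1}\frac{(i-1)(i-2)}{2}\,u_i(S)\ \ge\ \frac{q-1}{2},
\]
so it suffices to force enough high secants.

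The key geometric input is that, by definition, the nucleus $N$ lies on no secant of $S$, hence on no $2$-secant of any arc contained in $S$. Thus, if $A$ is an $S$-maximal $k$-arc, then $A\cup\{N\}$ is a $(k+1)$-arc, and moreover $(A\cup\{N\})\cup\{P\}$ fails to be an arc for every $P\in S\setminus A$ (it contains the non-arc $A\cup\{P\}$). I would first record a free-standing version of Lemma~\ref{lem-numtangent}(3) — its proof should use only that the set in question is an arc that cannot be extended by the external point, not full $S$-maximality — and apply it to the $(k+1)$-arc $A\cup\{N\}$. Since the line $\overline{PN}$ is external to $A$ but tangent to $A\cup\{N\}$, this bounds the number of tangents to $A$ through each $P\in S\setminus A$ by $2(q+1-k)-1$. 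Feeding $\la=2q-2k+1$ into Lemma~\ref{lem-upperboundstructure}(1) then yields, for $\frac{2q+4}{3}\le k\le q$,
\[
u_0(S)\ \le\ \frac{q(q-1)}{2}-\frac{(q+1-k)(3k-2q-1)}{2}\ \le\ \frac{q(q-1)}{2}-\frac{q-1}{2},
\]
where the subtracted term is $\ge \frac{q-1}{2}$ on the whole range and equals $\frac{q-1}{2}$ precisely at the two ends $k=q$ and $k=\frac{2q+4}{3}$. This simultaneously gives the bound for large maximal arcs and confines equality to these two arc sizes.

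For small $k$ I would invoke Lemma~\ref{lem-upperboundmaxarc} (together with the pro-arc estimate Lemma~\ref{lem-upperboundstructure}(2) and Lemma~\ref{lem-numtangent}(4) where its hypothesis holds), giving $\sum_{i\ge 3}\frac{(i-1)(i-2)}{2}u_i(S)\ge q+1-k\ge \frac{q-1}{2}$, strictly, as soon as $k\le \frac{q+3}{2}$. The equality analysis then pins $k=q$, so $S=A\cup\{Q\}$ falls under Example~\ref{exam-oddlarge}, and the nucleus performs the final selection: one checks that a set of the form Example~\ref{exam-oddlarge}(1) admits no nucleus, because every candidate point lies on a chord of $A$ (an off-conic point meets the circumscribing $(q+1)$-arc in at least two secants, at most one of which can avoid the deleted point $P$, and $P$ itself is a nucleus only when $\overline{PQ}$ is the tangent $\ell$, which is excluded in case~(1)). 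Hence $S$ must be of the form Example~\ref{exam-oddlarge}(2), delivering $u_0(S)=\frac{q(q-1)}{2}-\frac{q-1}{2}$ and the stated characterization.

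The main obstacle is the intermediate band $\frac{q+3}{2}<k<\frac{2q+4}{3}$, where neither the small-$k$ arc bound nor the nucleus-improved large-$k$ bound is by itself sharp enough to reach $\frac{q-1}{2}$; a closely related point is ruling out the boundary case $k=\frac{2q+4}{3}$ as a spurious equality configuration. Closing the band is where the real work lies: I expect to need the pro-arc set controlled via the nucleus (the arc $A\cup\{N\}$ restricts how pro-arc points can distribute along the $2$-secants of $A$), combined with the structural half of Theorem~\ref{thm-oddupper}, which already forces the unique near-extremal configuration in this range to a single maximal-arc size. A secondary technical task is to justify the transfer of Lemma~\ref{lem-numtangent}(3) from $S$-maximal arcs to $A\cup\{N\}$, i.e. to verify that its proof requires only non-extendability by the external point.
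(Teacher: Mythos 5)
Your overall strategy---bounding $u_0(S)$ through an $S$-maximal $k$-arc $A$ and using the nucleus $N$ to sharpen the tangent-line count via the $(k+1)$-arc $A\cup\{N\}$---is sound in its two outer regimes, and the computations you give there check out: $(q+1-k)\frac{3k-2q-1}{2}\ge\frac{q-1}{2}$ on $[\frac{2q+4}{3},q]$ with equality only at the endpoints, and $q+1-k\ge\frac{q-1}{2}$ for $k\le\frac{q+3}{2}$. But the argument is not complete, and you say so yourself. The band $\frac{q+3}{2}<k<\frac{2q+4}{3}$ is non-empty for every odd prime power $q\ge 9$ (for $q=13$ it contains $k=9$, where Lemma~\ref{lem-upperboundmaxarc}(2) only gives $u_0(S)\le 73$ while the target is $72$, and your nucleus-improved bound needs $k\ge\frac{2q+4}{3}=10$), and none of the lemmas you invoke covers it. In addition, the equality analysis at the two boundary values $k=\frac{2q+4}{3}$ and $k=\frac{q+3}{2}$---where your bounds permit equality but the configuration is not of the form of Example~\ref{exam-oddlarge}(2)---is missing, and this equality analysis is exactly what part (2) of the statement rests on in your reduction. ``I expect to need the pro-arc set controlled via the nucleus'' is a research plan, not a proof; as written, the claim is established only for sets whose $S$-maximal arcs avoid the middle band. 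The transfer of Lemma~\ref{lem-numtangent}(3) to the non-$S$-maximal arc $A\cup\{N\}$ is a smaller issue, but it too is flagged rather than resolved.

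The paper avoids all of this with a short reduction you should compare against. Since $N$ is a nucleus, every line through $N$ meets $S$, so $T=S\cup\{N\}$ is a $(q+2)$-set with $u_0(T)=u_0(S)$ in which $N$ is an internal nucleus (and $O$ is a second one in case (1)). The bounds $u_0(T)<\frac{q(q-1)}{2}-\frac{q-1}{2}$ when $T$ has exactly one internal nucleus and $u_0(T)\le\frac{q(q-1)}{2}-\frac{q-1}{2}$ when it has two are then quoted from the proof of Proposition 8 of \cite{BM}, and the equality case is pinned down by combining the characterization there (a $(q+1)$-arc plus a point lying on exactly two of its tangents) with the theorem of \cite{BK} that a $(q+2)$-set in odd characteristic has at most two internal nuclei. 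If you want to salvage the direct approach, the honest statement of what remains is: prove the bound, with strictness, when every $S$-maximal arc has size strictly between $\frac{q+3}{2}$ and $\frac{2q+4}{3}$, and rule out equality for $k\in\{\frac{q+3}{2},\frac{2q+4}{3}\}$.
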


\begin{remark}
When $S$ has no nucleus, as indicated by Example~\ref{exam-oddlarge}(1), the upper bound $u_0(S) \le \frac{q(q-1)}{2}-\frac{q-1}{2}$ in Proposition~\ref{prop-improvedupper} does not hold in general. Therefore, when $q$ is odd, we can only derive partial information about the second largest value of $u_0(S)$. It remains unclear whether the $(q+1)$-sets in Example~\ref{exam-oddlarge} always leads to the configurations achieving the second and third largest non-hitting index. As shown in Lemma~\ref{lem-oddsmall}, this is indeed the case when $q \in \{3,5,7\}$. In contrast, when $q$ is even, Theorem~\ref{thm-evenupper} indicates that the $(q+1)$-sets $S$ in Example~\ref{exam-evenlarge} always give the second and third largest value of $u_0(S)$, and very few $(q+1)$-sets $S$ which do not come from Example~\ref{exam-evenlarge} can achieve the same.
\end{remark}

Now, we proceed to derive a lower bound on $u_0(S)$ and give a characterization of $S$ when $u_0(S)$ is close to the lower bound.

\begin{theorem}\label{thm-lower}
Let $S$ be a $(q+1)$-set of degree $n$ with $3 \le n \le q+1$. Then we have $u_0(S) \ge n(q+2-n)-(q+1)$. More precisely,
\begin{itemize}
\item[(1)] $u_0(S)=0$ if and only if $n=q+1$, namely, $S$ forms a line in $PG(2,q)$. In this case, we have
$$
u_1(S)=q^2+q, \quad u_{q+1}(S)=1.
$$
\item[(2)] $u_0(S)=q-1$ if and only if $n=q$, namely, $q$ points of $S$ are on a line $\ell$ and the remaining one point is not on $\ell$. In this case, we have
$$
u_0(S)=q-1, \quad u_1(S)=q^2-q+1, \quad u_2(S)=q, \quad u_q(S)=1.
$$

\item[(3)] If $3 \le n \le q-1$, then $u_0(S) \ge 2q-4$. In particular,
   \begin{itemize}
      \item[(3a)] $u_0(S)=2q-4$ if and only if $n=q-1$, namely, $q-1$ points of $S$ are on one line, and the line determined by the remaining two points is a $3$-secant line to $S$. In this case, we have
$$
u_0(S)=2q-4, \quad u_1(S)=q^2-3q+7, \quad u_2(S)=2q-4, \quad u_3(S)=1, \quad u_{q-1}(S)=1.
$$
      \item[(3b)] $u_0(S)=2q-3$ if and only if $n=q-1$, namely, $q-1$ points of $S$ are on one line, and the line determined by the remaining two points is a $2$-secant line to $S$. In this case, we have
$$
u_0(S)=2q-3, \quad u_1(S)=q^2-3q+4, \quad u_2(S)=2q-1, \quad u_{q-1}(S)=1.
$$
   \end{itemize}
\end{itemize}
\end{theorem}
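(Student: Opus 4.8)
The plan is to prove the lower bound by a direct double count of the external lines relative to a single $n$-secant line, and then to extract each characterization by analyzing the equality cases together with the counting identities of Proposition~\ref{prop-upperbound}.

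\textbf{The lower bound.} Since $S$ has degree $n$, I fix an $n$-secant line $\ell$ and write $A=S\cap\ell$ and $T=S\sm\ell$, so $|A|=n$ and $|T|=q+1-n$. Every external line is distinct from $\ell$ and hence meets $\ell$ in a single point, which must lie in $\ell\sm S$ since the line misses $S$; conversely $\ell$ is not external. Grouping the external lines by their intersection point with $\ell$ therefore gives $u_0(S)=\sum_{P\in\ell\sm S}e(P)$, where $e(P)$ denotes the number of external lines through $P$. For a fixed $P\in\ell\sm S$, the $q$ lines through $P$ other than $\ell$ all avoid $A$ (each meets $\ell$ only at $P\notin S$) and partition the $q^2$ points off $\ell$; at most $|T|=q+1-n$ of them can meet $T$, so at least $q-(q+1-n)=n-1$ of them are external. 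Summing over the $q+1-n$ points of $\ell\sm S$ yields $u_0(S)\ge(q+1-n)(n-1)=n(q+2-n)-(q+1)$.

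\textbf{Cases (1) and (2).} For (1), $u_0(S)=0$ forces $(q+1-n)(n-1)=0$, and since $n\ge 3$ this gives $n=q+1$; then $A\subseteq\ell$ with $|A|=q+1$ forces $S=\ell$, and the stated distribution follows because every line other than $\ell$ is a tangent. For (2), viewing $(q+1-n)(n-1)$ as a function of $n$ on $[3,q-1]$, its minimum $2q-4$ occurs at the endpoints, so $u_0(S)\ge 2q-4>q-1$ whenever $3\le n\le q-1$ and $q\ge 4$ (the range being empty when $q=3$); together with (1) this isolates $n=q$. Conversely, when $n=q$ one has $|T|=1$, and a direct count through the unique point of $\ell\sm S$ gives $e(P)=q-1$; the remaining entries are obtained by sorting the lines through the single off-line point and through the points of $A$.

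\textbf{Part (3) and the main obstacle.} For $3\le n\le q-1$ the bound already yields $u_0(S)\ge 2q-4$. To pin down the values $2q-4$ and $2q-3$, I first treat $n=q-1$, where $T=\{Q_1,Q_2\}$ has only two points: the line $\ol{Q_1Q_2}$ meets $\ell$ in one point $R$, and one checks that $u_0(S)=2q-4$ exactly when $R\in A$ (so $\ol{Q_1Q_2}$ is a $3$-secant, case (3a)) and $u_0(S)=2q-3$ exactly when $R\in\ell\sm S$ (so $\ol{Q_1Q_2}$ is a $2$-secant, case (3b)), the full distributions being read off from this configuration. The delicate point, which is the main obstacle, is to show that no other degree produces these values. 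For $4\le n\le q-2$ the endpoint analysis of the parabola gives $u_0(S)\ge 3q-9>2q-3$ once $q\ge 7$, but the single-line bound is too weak for $n=3$; there I instead invoke Proposition~\ref{prop-upperbound}, whose identities for a degree-$3$ set reduce to $u_0(S)=\frac{q(q-1)}{2}-u_3(S)$ with $u_3(S)\le\frac{q(q+1)}{6}$ (from $u_2(S)\ge 0$), whence $u_0(S)\ge\frac{q(q-2)}{3}>2q-3$ for $q\ge 7$. This rules out every competing degree for $q\ge 7$, and the finitely many remaining prime powers in range (only $q\in\{4,5\}$, since $q=6$ is not a prime power) are disposed of by direct verification.
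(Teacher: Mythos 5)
Your argument is correct and follows essentially the same route as the paper: a counting bound relative to a fixed $n$-secant line giving $u_0(S)\ge(q+1-n)(n-1)=n(q+2-n)-(q+1)$, separation of the degrees via the behaviour of this quadratic (with the degree-$3$ case handled through Proposition~\ref{prop-upperbound} and the bound $u_3(S)\le q(q+1)/6$), and direct verification for the remaining small values of $q$. The only cosmetic differences are that you bound the external lines from below by grouping them through the points of $\ell\sm S$, where the paper bounds the number of lines hitting $S$ from above, and that you deduce $u_3(S)\le q(q+1)/6$ from $2u_2(S)+6u_3(S)=q(q+1)$ and $u_2(S)\ge 0$ rather than from counting $3$-secants through each point; both variants are valid.
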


\begin{remark}\label{rem-nonhittingspec}
For a prime power $q$, define its non-hitting spectrum as
$$
\Spec(q)=\{ u_0(S) \mid \mbox{$S$ is a $(q+1)$-set in $PG(2,q)$}\}.
$$
Combining Proposition~\ref{prop-upperbound}, Examples~\ref{exam-evenlarge}, \ref{exam-oddlarge} and Theorems~\ref{thm-evenupper}, \ref{thm-oddupper}, \ref{thm-lower}, we have
$$
\Spec(2)=\{0,1\}, \quad \Spec(3)=\{0,2,3\}, \quad \Spec(4)=\{0,3,4,5,6\}, \quad \Spec(5)=\{0,4,6,7,8,9,10\}.
$$
In addition, when $q=7$, we have
$$
\Spec(7) \cap [0,12]=\{0,6,10,11,12\}, \quad \Spec(7) \cap [18,21]=\{18,19,21\},
$$
and the set $\Spec(7) \cap [13,17]$ is still open.
\end{remark}


\section{The polynomial aspect of the intersection distribution}\label{sec-3}

In Section~\ref{sec-2}, we used a geometric approach to characterize $(q+1)$-set $S$ by its non-hitting index $u_0(S)$, when $u_0(S)$ is very close to the lower and upper bounds. Except these extremal cases, we know very little about the non-hitting index and intersection distribution of $S$.
On the other hand, when $u_0(S)$ is far away from $0$ and $\frac{q(q-1)}{2}$, the geometric approach becomes increasingly complicated. This motivates us to consider the polynomial aspect of the intersection distribution, where an algebraic approach involving polynomials over finite fields comes into play. The polynomial viewpoint enables us to derive bounds on the non-hitting index of certain $(q+1)$-sets, and to determine the intersection distribution of several classes of $(q+1)$-sets, which have nice polynomial representations.

Recall that a polynomial $f$ over $\Fq$ corresponds to a $(q+1)$-set
\begin{equation}\label{eqn-q+1}
S_f=\{ \lan (x,f(x),1) \ran \mid x \in \Fq \} \cup \{ \lan (0,1,0) \ran \},
\end{equation}
in which $\lan (0,1,0) \ran$ is an internal nucleus of $S_f$. Conversely, for a $(q+1)$-set $S$ with an internal nucleus, the following lemma indicates that $S$ can be associated with a polynomial over the finite field $\Fq$ of degree at most $q-1$, by using a proper coordinate system. We note that $S$ has a polynomial representation \eqref{eqn-q+1} only if it contains an internal nucleus.
This assumption imposes no restriction to study Kakeya sets, which will be the main theme of Section~\ref{sec-4}. We use $\lan (a,b,c)^T \ran$ to denote a line in $PG(2,q)$ consisting of points $\lan (x,y,z) \ran$ satisfying $ax+by+cz=0$.

\begin{lemma}\label{lem-polyrep}
Let $S$ be a $(q+1)$-set in $PG(2,q)$ containing an internal nucleus. Then $PG(2,q)$ can be coordinatized, such that $S=S_f$, for some polynomail $f$ over $\Fq$ of degree at most $q-1$ and $\lan (0,1,0) \ran$ is an internal nucleus of $S_f$. Moreover, $S_f$ has a nucleus $\lan (1,c,0) \ran$, where $c \in \Fq$, if and only if $f(x)-cx$ is a permutation polynomial.
\end{lemma}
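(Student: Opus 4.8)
The plan is to prove the two assertions separately: first realize $S$ as some $S_f$ by choosing good coordinates, and then compute exactly which points of the line at infinity are nuclei.

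For the coordinatization, I would begin from the internal nucleus $P_0 \in S$ and analyze the $q+1$ lines through it. Since $|S \sm \{P_0\}| = q$ and $P_0$ is an internal nucleus, each line through $P_0$ carries at most one further point of $S$; as the $q$ remaining points inject into the pencil of $q+1$ lines, exactly $q$ of these lines are $2$-secants and precisely one, say $\ell_0$, is tangent to $S$ at $P_0$. The key move is then to select an element of $\PGL(3,q)$ sending the incident pair $(P_0,\ell_0)$ to the flag $(\lan (0,1,0) \ran, \lan (0,0,1)^T \ran)$; this is possible because $\PGL(3,q)$ is transitive on flags. As projective maps preserve incidence, the image of $S$ again has $\lan (0,1,0) \ran$ as an internal nucleus with tangent line $z=0$. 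After this transformation the only point of $S$ on $z=0$ is $\lan (0,1,0) \ran$, so the other $q$ points are affine; because the $2$-secants through $\lan (0,1,0) \ran$ are exactly the vertical lines $x=t$ with $t \in \Fq$, each such line contains exactly one of these $q$ points. This assigns to each $t$ a unique value $f(t)$ with $\lan (t,f(t),1) \ran \in S$, and Lagrange interpolation furnishes a representing polynomial $f$ of degree at most $q-1$, whence $S=S_f$.

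For the nucleus statement, I would first note that $\lan (1,c,0) \ran \notin S_f$ for every $c \in \Fq$, since the only point of $S_f$ on $z=0$ is $\lan (0,1,0) \ran$; thus $\lan (1,c,0) \ran$ is a legitimate candidate for a nucleus. A short incidence computation shows the lines through $\lan (1,c,0) \ran$ are the line at infinity $z=0$ together with the affine lines $y=cx-d$ for $d \in \Fq$, that is, the non-vertical lines of slope $c$. The line $z=0$ always meets $S_f$ in the single point $\lan (0,1,0) \ran$, so the nucleus condition reduces to requiring that each line $y=cx-d$ meet $S_f$ in exactly one point. Since $\lan (x,f(x),1) \ran$ lies on $y=cx-d$ if and only if $f(x)-cx=-d$, this says $|\{x \in \Fq \mid f(x)-cx=-d\}|=1$ for every $d \in \Fq$, which is precisely the statement that $x \mapsto f(x)-cx$ is a bijection of $\Fq$, i.e.\ that $f(x)-cx$ is a permutation polynomial. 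Both directions of the equivalence are then immediate.

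I expect the only genuinely delicate point to be the coordinatization step, specifically the justification that one may simultaneously normalize the internal nucleus to $\lan (0,1,0) \ran$ \emph{and} its unique tangent line to $z=0$; this is exactly the flag-transitivity of $\PGL(3,q)$ and must be invoked explicitly, since mapping the point $P_0$ alone would not pin down the tangent direction needed to place the remaining $q$ points one per vertical line. The other verifications—the one-point-per-vertical-line distribution and the slope computation for the pencil through $\lan (1,c,0) \ran$—are routine incidence bookkeeping.
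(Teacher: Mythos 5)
Your proposal is correct and follows essentially the same route as the paper: normalize the internal nucleus and its unique tangent line to $\lan (0,1,0) \ran$ and the line at infinity (the paper does this implicitly where you invoke flag-transitivity explicitly), obtain $f$ by Lagrange interpolation from the one-point-per-vertical-line distribution, and reduce the nucleus condition to $f(x)-cx$ being a bijection via the same incidence computation.
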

\begin{proof}
Suppose $O$ is an internal nucleus of $S$, then there are $q$ lines through $O$ intersecting $S$ in a second point and one line $\ell$ which does not. Assume that $\ell$ is the line at infinity $\lan (0,0,1)^T \ran$ and $O=\lan (0,1,0) \ran$. Except $\ell$, the remaining $q$ lines through $O$ are of the form $\lan (1,0,-x)^T \ran$, where $x \in \Fq$. Every point of $S$ on $\lan (1,0,-x)^T \ran$ other than $O$ has the form $\lan (x,y_x,1) \ran$, where $y_x$ is an element of $\Fq$ depending on $x$. By Lagrange interpolation \cite[Theorem 1.71]{LN}, there exists a polynomial $f$ over $\Fq$ of degree at most $q-1$, such that $f(x)=y_x$ for each $x \in \Fq$. Consequently, $S=\{\lan (x,f(x),1) \ran \mid x \in \Fq\} \cup \{ \lan (0,1,0) \ran \}$.

Clearly, a nucleus to $S$ must be on the line $\ell$ and of the form $\lan (1,c,0) \ran$ for some $c \in \Fq$. Note that the line through $\lan (1,c,0) \ran$ and $\lan (x,f(x),1) \ran$ is $\lan (c,-1,f(x)-cx)^T \ran$. Hence, $\lan (1,c,0) \ran$ is a nucleus to $S$ if and only if $f(x)-cx$ is a permutation polynomial.
\end{proof}


The following is a natural connection between the intersection distributions of $f$ and the corresponding $(q+1)$-set $S_f$.

\begin{proposition}\label{prop-intdispoly}
Let $f$ be a polynomial over $\Fq$ and $S_f$ the associated $(q+1)$-set in $PG(2,q)$. Then we have
\begin{align*}
& v_0(f)=u_0(S_f), \quad v_1(f)=u_1(S_f)-1, \quad v_2(f)=u_2(S_f)-q, \\
& v_i(f)=u_i(S_f),\; \mbox{for each $3 \le i \le q$}, \quad u_{q+1}(S_f)=0.
\end{align*}
Consequently, we have
\begin{itemize}
\item[(1)] $v_0(f) \ge q-1$, where $v_0(f)=q-1$ if and only if $f$ is a linear function.
\item[(2)] $v_0(f) \le \frac{q(q-1)}{2}$, where $v_0(f)=\frac{q(q-1)}{2}$ if and only if one of the following holds:
           \begin{itemize}
             \item[$\bullet$] When $q$ is even, $f(x)-cx$ is an o-polynomial for some $c \in \Fq$.
             \item[$\bullet$] When $q$ is odd, $f$ is projectively equivalent to $x^2$.
           \end{itemize}
\item[(3)] If $q$ is even and $f(x)-cx$ is not an o-polynomial for each $c \in \Fq$, then $v_0(f) \le \frac{q(q-1)}{2}-\frac{q}{2}+1$.
\item[(4)] If $q$ is odd and $f(x)-cx$ is a permutation polynomial for some $c \in \Fq$, then $v_0(f) \le \frac{q(q-1)}{2}-\frac{q-1}{2}$.
\end{itemize}
\end{proposition}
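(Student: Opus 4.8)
The plan is to first establish the displayed identities by a line-by-line correspondence between $PG(2,q)$ and the polynomial data, and then to read off consequences (1)--(4) by substituting $v_0(f)=u_0(S_f)$ into the geometric results of Section~\ref{sec-2} together with Segre's classification of ovals. Reusing the coordinatization from the proof of Lemma~\ref{lem-polyrep}, I would partition the $q^2+q+1$ lines of $PG(2,q)$ according to how they meet the two distinguished points of $S_f$. The line at infinity $\lan(0,0,1)^T\ran$ contains the internal nucleus $O=\lan(0,1,0)\ran$ but no affine point $\lan(x,f(x),1)\ran$, so it is a tangent, contributing $1$ to $u_1(S_f)$. Each of the $q$ vertical lines $\lan(1,0,-c)^T\ran$ contains $O$ and the single affine point with $x=c$, so each is a $2$-secant, contributing $q$ to $u_2(S_f)$. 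The remaining $q^2$ lines are the non-vertical affine lines $\lan(a,-1,b)^T\ran$; none contains $O$, and such a line meets $S_f$ in exactly $i$ points precisely when $f(x)-ax-b=0$ has $i$ solutions, which is the quantity counted by $v_i(f)$. Collecting the three contributions yields $u_1(S_f)=v_1(f)+1$, $u_2(S_f)=v_2(f)+q$, $u_i(S_f)=v_i(f)$ for $3\le i\le q$, and $u_0(S_f)=v_0(f)$. Since a non-vertical line meets the graph in at most $q$ points while $\ell_\infty$ and the vertical lines meet $S_f$ in at most $2$, no line meets $S_f$ in $q+1$ points, so $u_{q+1}(S_f)=0$; equivalently $S_f$ is never a line (a line through $O$ would otherwise violate the internal-nucleus condition), so the degree $n$ of $S_f$ satisfies $2\le n\le q$.

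For (1), I would split on $2\le n\le q$. If $n=2$ then $S_f$ is a $(q+1)$-arc and $u_0(S_f)=\frac{q(q-1)}{2}\ge q-1$ by Proposition~\ref{prop-upperbound}. If $3\le n\le q$, Theorem~\ref{thm-lower} gives $u_0(S_f)\ge n(q+2-n)-(q+1)$, and the elementary identity $n(q+2-n)-(q+1)-(q-1)=-(n-2)(n-q)\ge 0$ on $[2,q]$ yields $u_0(S_f)\ge q-1$; hence $v_0(f)\ge q-1$ in all cases. For equality, $v_0(f)=q-1$ excludes $n=2$ for $q\ge 3$ (where $\frac{q(q-1)}{2}>q-1$) and forces, by Theorem~\ref{thm-lower}(2), the degree-$q$ configuration: $q$ points of $S_f$ on a line $\ell$ and one point off $\ell$. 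The internal nucleus $O$ cannot lie among the $q$ collinear points (else $\ell$ through $O$ would carry $q-1\ge 2$ further points of $S_f$), so the $q$ collinear points are exactly the affine points, i.e.\ $f$ is linear; the converse is immediate, since for linear $f$ the graph lies on a projective line avoiding $O$. The degenerate case $q=2$ can be checked directly.

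For (2), the bound $v_0(f)=u_0(S_f)\le\frac{q(q-1)}{2}$ with equality iff $S_f$ is a $(q+1)$-arc is exactly Proposition~\ref{prop-upperbound}, so it remains to translate ``$S_f$ is an oval'' into the stated polynomial conditions. Since the only line through $O$ and an affine point is the corresponding vertical line, $O$ is never collinear with two affine points, so $S_f$ is an arc iff no three affine points are collinear, i.e.\ iff every $f(x)-ax$ is at most $2$-to-$1$. When $q$ is odd, Segre's theorem \cite{Hirs} makes $S_f$ a conic; as all nondegenerate conics are projectively equivalent and $S_{x^2}$ is the conic $X^2=YZ$, we obtain $f$ projectively equivalent to $x^2$, and the converse holds because collineations preserve arcs. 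When $q$ is even, $S_f$ extends to a unique hyperoval $S_f\cup\{N\}$ whose nucleus $N$ lies on the tangent to $S_f$ at $O$, namely $\ell_\infty$; thus $N=\lan(1,c_0,0)\ran$ is a nucleus of $S_f$, and Lemma~\ref{lem-polyrep} gives that $g:=f-c_0x$ is a permutation. To upgrade ``at most $2$-to-$1$'' to ``exactly $2$-to-$1$'', I would count tangents: writing $s_a$ for the number of values attained exactly once by $f-ax$, the arc relations give $\sum_a s_a=v_1(f)=u_1(S_f)-1=q$, while $s_{c_0}=q$ (as $g$ is a permutation) forces $s_a=0$ for $a\ne c_0$, so each $g(x)+cx=f(x)-(c_0-c)x$ with $c\ne 0$ is exactly $2$-to-$1$ and $g$ is an o-polynomial; the converse follows by reversing the argument.

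Finally, (3) and (4) reduce to Section~\ref{sec-2}. For (3), if no $f-cx$ is an o-polynomial then by (2) $S_f$ is not a $(q+1)$-arc, and Theorem~\ref{thm-evenupper} gives $u_0(S_f)\le\frac{q(q-1)}{2}-\frac{q}{2}+1$. For (4), if some $f-cx$ is a permutation then by Lemma~\ref{lem-polyrep} the set $S_f$ has a nucleus; since a conic has no nucleus when $q$ is odd and the existence of a nucleus is a collineation invariant, $S_f$ is not a conic, hence (by Segre together with part (2)) not a $(q+1)$-arc, so Proposition~\ref{prop-improvedupper}(1) applies and yields $u_0(S_f)\le\frac{q(q-1)}{2}-\frac{q-1}{2}$. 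I expect the main obstacle to be the even equality case of (2): producing an honest o-polynomial requires both the existence of a permutation direction $c_0$ (supplied by the hyperoval nucleus via Lemma~\ref{lem-polyrep}) and the promotion of ``at most $2$-to-$1$'' to ``exactly $2$-to-$1$'' (supplied by the tangent count $\sum_a s_a=q$), while the small prime powers where the upper and lower bounds nearly coincide should be verified separately.
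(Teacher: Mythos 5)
Your proof is correct and follows essentially the same route as the paper's: the line-by-line correspondence (line at infinity tangent, $q$ vertical $2$-secants, $q^2$ non-vertical lines counted by $v_i$) for the displayed identities, Theorem~\ref{thm-lower} for (1), Proposition~\ref{prop-upperbound} together with Segre's theorem and the hyperoval extension for (2), Theorem~\ref{thm-evenupper} for (3), and Proposition~\ref{prop-improvedupper} for (4). The one genuine divergence is the even-$q$ equality case of (2): the paper cites \cite[Theorem 8.22]{Hirs} and \cite[Lemma 13]{CM} for the oval/o-polynomial dictionary, whereas you derive it directly from the tangent count $\sum_{a}M_1(f,a)=v_1(f)=u_1(S_f)-1=q$ combined with $M_1(f,c_0)=q$, which correctly upgrades ``at most $2$-to-$1$'' to ``exactly $2$-to-$1$''; you also make explicit a point the paper leaves implicit in (4), namely that $S_f$ cannot be a $(q+1)$-arc when it has a nucleus and $q$ is odd, which is needed before Proposition~\ref{prop-improvedupper} can be invoked.
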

\begin{proof}
Since $\lan (0,1,0) \ran$ is an internal nucleus of $S_f$, the relation between $v_i(f)$ and $u_i(S_f)$ easily follows. Note that $v_0(f)=u_0(S_f)$. The $(q+1)$-set $S_f$ containing an internal nucleus means that its degree is at most $q$. Therefore, (1) follows from Theorem~\ref{thm-lower}(2). By Proposition~\ref{prop-upperbound}, $u_0(S_f)=\frac{q(q-1)}{2}$ if and only if $S_f$ is a $(q+1)$-arc in $PG(2,q)$. If $q$ is even, then a $(q+1)$-arc $S_f$ can be uniquely extended to a $(q+2)$-arc $S_f \cup \{ \lan (1,c,0) \ran \}$, where $c \in \Fq$ \cite[Corollary 8.7]{Hirs}. According to \cite[Theorem 8.22]{Hirs}, \cite[Lemma 13]{CM} and Lemma~\ref{lem-polyrep}, $S_f \cup \{\lan (1,c,0) \ran\}$ is a $(q+2)$-arc if and only if $f(x)-cx$ is an o-polynomial. If $q$ is odd, there exists an automorphism of $PG(2,q)$, which transforms $S_f$ into $S_{x^2}=\{ \lan (x,x^2,1) \ran \mid x \in \Fq \} \cup \{ \lan (0,1,0) \ran \}$ \cite[Theorem 8.14]{Hirs}. Thus, we complete the proof of (2). When $q$ is even, if $f(x)-cx$ is not an o-polynomial for each $c \in \Fq$, then $S_f \cup \{\lan (1,c,0) \ran\}$ is not a $(q+2)$-arc for each $c \in \Fq$, which implies that $S_f$ is not a $(q+1)$-arc. Hence, (3) follows from Theorem~\ref{thm-evenupper}. Finally, if $f(x)-cx$ is a permutation polynomial, then by Lemma~\ref{lem-polyrep}, $\lan (1,c,0) \ran$ is a nucleus to $S_f$, and therefore, $v_0(f)=u_0(S_f)=u_0(S_f \cup \{\lan (1,c,0) \ran\})$. When $q$ is odd, applying Proposition~\ref{prop-improvedupper} finishes the proof of (4).
\end{proof}

\begin{remark}
Historically, various aspects of power mappings, including planarity {\rm\cite{CM2,Pott}}, almost perfect nonlinearity {\rm\cite{BCCL,HRS,Pott}},  differential properties in general {\rm\cite{BCC}}, bent property {\rm\cite{CCK,LL}} and Walsh spectrum {\rm\cite{Hel}}, have been intensively studied. We regard the non-hitting index as a new way to analyze and distinguish power mappings. More precisely, given a polynomial $f$ over $\Fq$, its non-hitting index $v_0(f)$ belongs to the interval $[q-1,\frac{q(q-1)}{2}]$, where $v_0(f)=q-1$ if and only if $f$ is linear, and $v_0(f)=\frac{q(q-1)}{2}$ if and only if $f$ is an o-polynomial when $q$ is even, or $f$ is projectively equivalent to $x^2$ when $q$ is odd. The non-hitting index of a polynomial $f$ proposes a new viewpoint to measure the distance between $f$ and the aforementioned polynomials.
\end{remark}

Proposition~\ref{prop-intdispoly} indicates a polynomial approach to study the intersection distribution of a $(q+1)$-set $S$ with an internal nucleus. Once we know the polynomial $f$ associated with $S$, determining the intersection distribution of $S$ can be converted to the computation of the intersection distribution of $f$, which is a problem concerning polynomials over finite field. Not surprisingly, using known results related to polynomials, we can obtain more detailed information about lower and upper bounds on $v_0(f)$, which involves the degree of $f$ and the size of $N_f$.

\begin{proposition}\label{prop-polybound}
Let $f$ be a polynomial over $\Fq$ of degree $d$ with $2 \le d \le q-1$. Then we have
\begin{equation}\label{eqn-lowerupper1}
\lc \frac{q-1}{d}\rc (q-|N_f|) \le v_0(f) \le (q-\lc \frac{q}{d} \rc)(q-|N_f|).
\end{equation}
In particular, we have the following bounds which only involve the degree $d$ and the finite field size $q$.
\begin{itemize}
\item[(1)]
\begin{equation}\label{eqn-lowerupper2}
v_0(f) \ge \lc \frac{q-1}{d}\rc \max \{ \lc \frac{q-1}{d-1} \rc, d+1 \}
\end{equation}
\item[(2)] If $d | (q-1)$ then
\begin{equation}\label{eqn-lowerupper3}
\frac{q(q-1)}{d} \le v_0(f) \le \frac{(d-1)q(q-1)}{d}.
\end{equation}
\end{itemize}
\end{proposition}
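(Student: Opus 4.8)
The plan is to reduce everything to the master identity $v_0(f)=\sum_{c\in\Fq}(q-|V_{f,c}|)$ from Remark~\ref{rem-polyint}(2) together with sharp bounds on the size of a single value set. For $c\notin N_f$ the polynomial $f(x)+cx$ has degree $d$ (since $d\ge 2$) and is not a permutation, so I would invoke two classical value-set estimates: the pigeonhole bound $|V_{f,c}|\ge\lc q/d\rc$ (each value has at most $d$ preimages) and the companion bound $|V_{f,c}|\le q-\lc (q-1)/d\rc$ valid for any non-permutation of degree $d$ (Wan's value-set theorem). The terms with $c\in N_f$ contribute $0$ and there are exactly $q-|N_f|$ terms with $c\notin N_f$, so summing the per-term inequalities $\lc(q-1)/d\rc\le q-|V_{f,c}|\le q-\lc q/d\rc$ yields \eqref{eqn-lowerupper1} at once.

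The two bounds in \eqref{eqn-lowerupper2} then follow from \eqref{eqn-lowerupper1} once $q-|N_f|$ is bounded below, and the key observation is that $q-|N_f|$ is precisely the number of directions determined by the graph $\{(x,f(x))\}$: a slope $c$ is determined exactly when $f(x)+cx$ fails to be injective, i.e.\ when $c\notin N_f$. I would give two independent lower bounds on this count. First, fixing one point $(x_0,f(x_0))$ of the graph, the chord slopes from it are the values of $\phi(x)=\frac{f(x)-f(x_0)}{x-x_0}$, a polynomial of degree $d-1$, so they take at least $\lc(q-1)/(d-1)\rc$ distinct (hence determined) values. Second, I would use the R\'edei polynomial $R(X,Y)=\prod_{a\in\Fq}(X+aY-f(a))$: for every non-determined slope $y$ one has $R(X,y)=X^q-X$, so the coefficient of $X^d$ vanishes at all such $y$ (using $2\le d\le q-1$). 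Computing that coefficient as a polynomial in $Y$ — via $\prod_{a'\ne a}(T-a')=(T-a)^{q-1}-1$, whose elementary symmetric functions give $e_i(\Fq\sm\{a\})\equiv(-1)^ia^i$ — shows its $Y^{q-d-1}$-coefficient equals $-\sum_a a^{q-d-1}f(a)=-c_d\ne 0$, where $c_d$ is the leading coefficient of $f$. Hence this coefficient is a nonzero polynomial of degree $q-d-1$ vanishing at all non-determined slopes, forcing $q-|N_f|\ge d+1$. Taking the larger of the two lower bounds gives \eqref{eqn-lowerupper2}.

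For \eqref{eqn-lowerupper3} the hypothesis $d\mid(q-1)$ collapses $N_f$ entirely. By the Hermite--Dickson criterion there is no permutation polynomial of $\Fq$ of degree $d$ when $2\le d$ and $d\mid(q-1)$: if $g$ had degree $d$ and permuted $\Fq$, then for $m=(q-1)/d\in[1,q-2]$ the power sum $\sum_{x}g(x)^m$ would equal $-(\mathrm{lc}\,g)^m\ne 0$ (only the $x^{q-1}$ term survives the reduction), contradicting the vanishing required by Hermite's criterion. Thus $f(x)+cx$ is never a permutation, so $N_f=\es$ and $q-|N_f|=q$. Substituting into \eqref{eqn-lowerupper1} and simplifying $\lc(q-1)/d\rc=(q-1)/d$ and $q-\lc q/d\rc=(d-1)(q-1)/d$ produces \eqref{eqn-lowerupper3}.

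The step I expect to be the main obstacle is the inequality $q-|N_f|\ge d+1$, equivalently the upper bound on the number of $c$ for which $f(x)+cx$ permutes $\Fq$. Naive pair-counting or incidence arguments only recover the weaker $\lc(q-1)/(d-1)\rc$ (this bound is far from tight, e.g.\ for $x^5$ over $\F_7$ where $d+1$ is exact), so the genuinely decisive input is the algebraic structure of the R\'edei polynomial and the identification of its degree-$d$ coefficient in $X$ as a nonzero polynomial in $Y$ of degree exactly $q-d-1$. Everything else is value-set bookkeeping together with the classical Hermite--Dickson nonexistence result.
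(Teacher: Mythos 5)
Your proof is correct and follows essentially the same route as the paper's: the identity $v_0(f)=\sum_{c\in\Fq}(q-|V_{f,c}|)$, the per-term value-set bounds $\lceil q/d\rceil \le |V_{f,c}|\le q-\lceil (q-1)/d\rceil$ for the non-permutation directions, the two upper bounds $|N_f|\le q-\lceil (q-1)/(d-1)\rceil$ and $|N_f|\le q-1-d$, and the nonexistence of degree-$d$ permutation polynomials when $d\mid(q-1)$. The only difference is that the paper cites these three ingredients from \cite{WSC}, \cite{WMS} and \cite[Corollary 7.5]{LN}, whereas you supply self-contained (and, as far as I can check, correct) proofs via the chord-slope count, the R\'edei-polynomial coefficient computation, and the Hermite-criterion power-sum calculation.
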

\begin{proof}
By definition,
\begin{equation}\label{eqn-nonhitting}
v_0(f)=\sum_{c \in \Fq} (q-|V_{f,c}|)=q^2-q|N_f|-\sum_{c \notin N_f} |V_{f,c}|.
\end{equation}
Note that for each polynomial $g$ over $\Fq$ of degree $d$, which is not a permutation polynomial, we have $\lc \frac{q}{d}\rc \le |V_g| \le \lf q-\frac{q-1}{d} \rf$ (see for instance \cite[p. 711]{WSC}). Hence, by \eqref{eqn-nonhitting}, we have
\begin{align*}
v_0(f) &\ge q^2-q|N_f|-(q-|N_f|)\lf q-\frac{q-1}{d} \rf=\lc \frac{q-1}{d}\rc (q-|N_f|), \\
v_0(f) &\le q^2-q|N_f|-(q-|N_f|)\lc \frac{q}{d} \rc=(q-\lc \frac{q}{d} \rc)(q-|N_f|).
\end{align*}
Note that $|N_f| \le \min\{ q-1-d, q-\lc \frac{q-1}{d-1} \rc\}$ (see for instance \cite[pp. 133-134]{WMS}), hence we have $v_0(f) \ge \lc \frac{q-1}{d}\rc \max \{d+1, \lc \frac{q-1}{d-1} \rc \}$. Finally, if $d|(q-1)$ and $d>1$, then there exists no permutation polynomial of degree $d$ \cite[Corollary 7.5]{LN}. Hence, $|N_f|=0$ and we derive \eqref{eqn-lowerupper3} from \eqref{eqn-lowerupper1}.
\end{proof}

\begin{remark}
Except for a few bounds {\rm\cite{EGN,Tur,WMS,WSC}}, we do not know much about the size of $N_f$. This is the reason that the tightness of the lower and upper bounds in \eqref{eqn-lowerupper1} is subtle. Still, the two special cases in \eqref{eqn-lowerupper2} and \eqref{eqn-lowerupper3} give us some clue. On one hand, when $q$ is odd and $d=2$, the lower and upper bounds in \eqref{eqn-lowerupper3} coincide and are both tight. On the other hand, the lower bound in \eqref{eqn-lowerupper2} is a constant multiple of $q$ when $d$ approaches $q-1$, which is weak in general. Moreover, the upper bound in \eqref{eqn-lowerupper3} becomes vacuous when $d>2$. In this sense, we think the bounds in \eqref{eqn-lowerupper1} could be further improved.
\end{remark}


Given an arbitrary $(q+1)$-set $S$ with an internal nucleus, it is in general difficult to compute its intersection distribution. Equivalently, computing the intersection distribution of an arbitrary polynomial $f$ is hard. On the other hand, we recall that by Remark~\ref{rem-polyint}(2), the intersection distribution of $f$ follows immediately from the multiplicity distribution of $f$ at $c$, where $c$ ranges over $\Fq$. We list in Table~\ref{tab-interdispoly} the intersection distributions of several classes of power mappings, which are considered to be the most obvious ones. More precisely, in Appendix~\ref{sec-appendixB}, we compute the multiplicity distribution of power mappings $x^d$ over finite fields $\Fq$ with $q=p^s$ and $p$ being prime, where $d \in \{p^i,p^i+1,\frac{q-1}{2},\frac{q+1}{2},q-2,q-1\}$, $0 \le i \le s-1$. Thus, the intersection distribution in Table~\ref{tab-interdispoly} follows immediately. Consequently, we derive the intersection distribution of the corresponding $(q+1)$-set which has a nice polynomial representation. We note that to our best knowledge, there are very few polynomials whose multiplicity distribution has been known before, see for instance \cite{Blu} and \cite[Chapter 3, Section 4]{LN}.

\begin{table}[H]
\ra{1.5}
\caption{The intersection distribution of some power mappings $x^d$ over $\Fq$, where $q=p^s$ for a prime $p$}
\label{tab-interdispoly}
\begin{center}

\begin{tabular}{|c|c|}
\hline
Exponent &Intersection Distribution \\ \hline
$d=p^i$, $0 \le i \le s-1$ & \multirow{2}{*}{$v_0=p^{s-h}(p^s-1)$, $v_1=\frac{p^s(p^{s+h}-2p^s+1)}{p^h-1}$, $v_{p^h}=\frac{p^{s-h}(p^s-1)}{p^h-1}$} \\
$h=\gcd(i,s)$      & \\ \hline
$d=p^i+1$, $0 \le i \le s-1$ & $v_0=\frac{p^{h}(p^{2s}-1)}{2(p^h+1)}$, $v_1=p^{2s-h}-p^{s-h}+1$, \\
$h=\gcd(i,s)$  & $v_2=\frac{p^h(p^s-2p^{s-h}+1)(p^s-1)}{2(p^h-1)}$, $v_{p^h+1}=\frac{(p^{s-h}-1)(p^s-1)}{p^{2h}-1}$ \\ \hline
$d=\frac{q-1}{2}$, $q \equiv 1 \bmod4$ & $v_0=\frac{q^2+6q-15}{4}$, $v_1=\frac{q^2-4q+5}{2}$, $v_2=\frac{q^2+2q-3}{4}$, $v_{\frac{q-1}{2}}=2$ \\ \hline
$d=\frac{q-1}{2}$, $q \equiv 3 \bmod4$ & $v_0=\frac{q^2+4q-13}{4}$, $v_1=\frac{q^2-q+2}{2}$, $v_2=\frac{q^2-4q+3}{4}$, $v_3=\frac{q-1}{2}$, $v_{\frac{q-1}{2}}=2$ \\ \hline
$d=\frac{q+1}{2}$& $v_0=\frac{q^2+2q-3}{4}$, $v_1=\frac{q^2-3}{2}$, $v_2=\frac{(q-1)^2}{4}$, $v_{\frac{q+1}{2}}=2$ \\ \hline
$d=q-2$, $q$ even & $v_0=\frac{q(q-1)}{2}$, $v_1=q$, $v_2=\frac{q(q-1)}{2}$ \\ \hline
$d=q-2$, $q$ odd & $v_0=\frac{(q-1)^2}{2}$, $v_1=\frac{5q-3}{2}$, $v_2=\frac{(q-1)(q-3)}{2}$, $v_3=\frac{q-1}{2}$ \\ \hline
$d=q-1$ & $v_0=2q-3$, $v_1=q^2-3q+3$, $v_2=q-1$, $v_{q-1}=1$ \\ \hline
\end{tabular}
\end{center}
\end{table}

\begin{remark}\label{rem-measureconnection}
The equivalence problem for polynomials has been intensively studied under various concepts of equivalence, such as the extended-affine (EA) equivalence (see for instance {\rm\cite[p. 1142]{BCP}}) and the Carlet-Charpin-Zinoviev (CCZ) equivalence {\rm\cite[Proposition 3]{CCZ}, \cite[Definition 1]{BCP}}. These two equivalence criteria are based on the fact that by choosing a basis of $\F_{p^s}$ over $\Fp$, a polynomial over $\F_{p^s}$ can be represented as a mapping from vector space $\Fp^s$ to $\Fp^s$. In the definition of EA and CCZ equivalence, the structure of the vector space $\Fp^s$ plays a crucial role. In contrast, the projective equivalence provides a new angle for the equivalence problem, in the sense that it is really a property about the finite field $\F_{p^n}$ and has nothing to do with the vector space $\Fp^n$. More precisely, the projective equivalence only depends on a geometrical property of the corresponding $(p^s+1)$-set in $PG(2,p^s)$. Note that the intersection distribution is an invariant of the projective equivalence. In Table~\ref{tab-interdispoly}, the intersection distribution offers an interesting viewpoint to distinguish projectively inequivalent power mappings of the form $x^{p^i}$ and $x^{p^i+1}$ over $\F_{p^s}$.
\end{remark}

Finally, in Table~\ref{tab-monononhitting}, we document some computational results demonstrating the non-hitting index of all power mappings in $\Fq$ of degree at most $q-1$, where $q \le 16$. For $1 \le d \le q-1$ and $\gcd(d,q-1)=1$, by definition, the two power mappings $x^d$ and $x^{d^{-1}}$ have the same multiplicity distribution, where $d^{-1}$ is the inverse of $d$ modulo $q-1$. Consequently, $x^d$ and $x^{d^{-1}}$ have the same intersection distribution. Hence, in the second column, if $\gcd(d,q-1)=1$, we group the two exponents $\{d,d^{-1}\}$ since they have the same non-hitting index. We use the superscript $\bigstar$ to highlight the non-hitting index which does not follow from Table~\ref{tab-interdispoly}. 

\begin{table}
\ra{1.2}
\begin{center}
\caption{The non-hitting index of all power mappings in $\Fq$, $q \le 16$}
\label{tab-monononhitting}
\begin{tabular}{|c|l|}
\hline
$q$ & $(d,v_0(x^d))$ \\ \hline
$2$  &  $(1,1)$   \\ \hline
$3$  &  $(1,2)$, $(2,3)$   \\ \hline
$4$  &  $(1,3)$, $(2,6)$, $(3,5)$   \\ \hline
$5$  &  $(1,4)$, $(2,10)$, $(3,8)$, $(4,7)$   \\ \hline
$7$  &  $(1,6)$, $(2,21)$, $(3,16)$, $(4,15)$, $(5,18)$, $(6,11)$ \\ \hline
$8$  &  $(1,7)$, $(\{2,4\},28)$, $(\{3,5\},21)$, $(6,28)$, $(7,13)$ \\ \hline
$9$  &  $(1,8)$, $(2,36)$, $(3,24)$, $(4,30)$, $(5,24)$, $(6,28)^\bigstar$, $(7,32)$, $(8,15)$  \\ \hline
$11$  &  $(1,10)$, $(2,55)$, $(\{3,7\},40)^\bigstar$, $(4,45)^\bigstar$, $(5,38)$, $(6,35)$ , $(8,45)^\bigstar$, $(9,50)$, $(10,19)$ \\ \hline
\multirow{2}{*}{$13$}  &  $(1,12)$, $(2,78)$, $(3,56)^\bigstar$, $(4,57)^\bigstar$, $(5,60)^\bigstar$, $(6,58)$, $(7,48)$, $(8,69)^\bigstar$, $(9,56)^\bigstar$, \\
& $(10,54)^\bigstar$, $(11,72)$, $(12,23)$ \\ \hline
\multirow{2}{*}{$16$}  &  $(1,15)$, $(\{2,8\},120)$, $(3,85)$, $(4,60)$, $(5,102)$, $(6,85)^\bigstar$, $(\{7,13\},75)^\bigstar$, $(9,85)$, \\
& $(10,87)^\bigstar$, $(11,90)^\bigstar$, $(12,70)^\bigstar$, $(14,120)$, $(15,29)$ \\ \hline
\end{tabular}
\end{center}
\end{table}

\section{Application to Kakeya sets in affine planes}\label{sec-4}

Let $\ell$ be the line at infinity in $PG(2,q)$. For each point $P \in \ell$, define $\ell_P$ to be a line through $P$ other than $\ell$. A \emph{Kakeya set} in $PG(2,q)$ is defined to be the point set
$$
K=(\bigcup_{P \in \ell} \ell_P) \sm \ell.
$$
If we restrict ourselves to the affine plane $AG(2,q)=PG(2,q) \sm \ell$, then the Kakeya set $K$ contains an affine line in each direction. While the construction of Kakeya sets is easy, computing its size is difficult. The Kakeya set problem asks for the construction and characterization of Kakeya sets in $PG(2,q)$ with small or large sizes, for which there have been a series of results \cite{BDMS,BM,BV,DM1,DM2,DMS,Fab}. More precisely, when $q$ is even, the Kakeya sets with first, second, and third smallest sizes have been characterized \cite{BDMS,BM}. When $q$ is odd, the characterization of Kakeya sets with smallest size is also known \cite{BM}. On the other hand, in \cite{DM2}, Dover and Mellinger did an exhaustive computer search and determined all attainable sizes of Kakeya sets in $PG(2,q)$ with $q \le 9$. Except for the Kakeya sets with very small or large sizes close to the lower or upper bounds, a large fraction of them have not been well understood. This actually motivates us to consider the Kakeya set problem in a different way. Inspired by the interaction between $(q+1)$-sets and polynomials in Proposition~\ref{prop-intdispoly}, we propose a polynomial approach to Kakeya sets. By examining Kakeya sets which have nice algebraic structure, that is, derived from monomials, we succeed in computing their sizes in many cases. In fact, the computation essentially boils down to the calculation of the multiplicity distribution of certain monomials. As a consequence, many attainable sizes in \cite[Table 1]{DM2} can be explained by Kakeya sets derived from monomials.

We shall note that instead of affine planes, Kakeya sets in affine spaces with higher dimension have also been studied, see \cite{Dvir,Wol} for instance. Here, we focus on the affine plane case, for which the following alternative viewpoint that appeared in \cite{BM} is crucial.

\begin{lemma}\label{lem-DK}
Let $K$ be a Kakeya set in $PG(2,q)$, where $K=(\bigcup_{P \in \ell} \ell_P) \sm \ell$. Define the dual Kakeya set $DK$ to be the dual of the $q+2$ lines $\{\ell_P \mid P \in \ell\} \cup \{\ell\}$. Then $DK$ is a $(q+2)$-set in $PG(2,q)$ with an internal nucleus, such that $|K|=q^2-u_0(DK)$.
\end{lemma}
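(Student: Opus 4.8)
\emph{Proof proposal.} The plan is to reduce the whole statement to the standard point--line duality of $PG(2,q)$. Let $\de$ be the correlation sending the point $\lan (a,b,c) \ran$ to the line $\lan (a,b,c)^T \ran$ and the line $\lan (a,b,c)^T \ran$ to the point $\lan (a,b,c) \ran$; it is an incidence-reversing involution, so for a point $P$ and a line $m$ one has $P \in m$ if and only if $\de(m) \in \de(P)$. By construction $DK = \{\de(\ell_P) \mid P \in \ell\} \cup \{\de(\ell)\}$. First I would check that the $q+2$ lines $\{\ell_P \mid P \in \ell\} \cup \{\ell\}$ are pairwise distinct: since each $\ell_P \ne \ell$ meets $\ell$ only in $P$, the lines $\ell_P$ are distinct for distinct $P \in \ell$ and none of them equals $\ell$. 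As $\de$ is a bijection from lines to points, $DK$ is therefore a $(q+2)$-set, and $\de(\ell) \in DK$.

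Next I would show that $\de(\ell)$ is an internal nucleus of $DK$. The $q+1$ lines through the point $\de(\ell)$ dualize to the $q+1$ points of $\ell$: a line $m \ni \de(\ell)$ satisfies $\de(m) \in \ell$. For $P \in \ell$, the point $\de(\ell_P)$ lies on $m$ if and only if $\de(m) \in \ell_P$, and since $\de(m) \in \ell$ while $\ell_P \cap \ell = \{P\}$, this happens precisely when $\de(m) = P$. Hence each line through $\de(\ell)$ contains exactly one further point of $DK$, namely $\de(\ell_{\de(m)})$, which establishes the internal nucleus property (in fact every such line is a $2$-secant to $DK$).

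Finally, for the size formula I would translate the uncovered affine points into external lines. An affine point $X$, i.e. $X \notin \ell$, fails to lie on $K = (\bigcup_{P \in \ell} \ell_P) \sm \ell$ exactly when $X \notin \ell_P$ for every $P \in \ell$. Dualizing, the conditions $X \notin \ell$ and $X \notin \ell_P$ for all $P$ become $\de(\ell) \notin \de(X)$ and $\de(\ell_P) \notin \de(X)$ for all $P$; equivalently, the line $\de(X)$ meets $DK$ in no point, that is, $\de(X)$ is external to $DK$. Since $\de$ is a bijection between points and lines, this yields a bijection between the affine points not covered by $K$ and the external lines to $DK$. As $\AG(2,q)$ has exactly $q^2$ points, we conclude $|K| = q^2 - u_0(DK)$. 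The only genuine care needed throughout is the bookkeeping of incidences under $\de$ together with the affine/infinite dichotomy; the conceptual crux, and the one step worth isolating cleanly, is the correspondence \emph{uncovered affine point} $\leftrightarrow$ \emph{external line to $DK$}, after which the count is immediate.
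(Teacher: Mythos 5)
Your proposal is correct and follows essentially the same route as the paper: both arguments rest on the standard point--line duality, with the paper counting $|K\cup\ell|$ as the number of lines meeting $DK$ (namely $q^2+q+1-u_0(DK)$) and then subtracting the $q+1$ points of $\ell$, while you equivalently count the uncovered affine points as the external lines to $DK$. Your write-up additionally verifies the claims the paper dismisses as clear (that the $q+2$ lines are distinct and that $\de(\ell)$ is an internal nucleus), which is a welcome but not substantively different addition.
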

\begin{proof}
Clearly, $DK$ is a $(q+2)$-set and the point dual to $\ell$ is an internal nucleus. Note that $|K|=|K \cup \ell |-(q+1)$. By duality, $|K \cup \ell|$ equals the number of lines intersecting the dual $DK$, which is $q^2+q+1-u_0(DK)$. Thus, $|K|=|K \cup \ell |-(q+1)=q^2-u_0(DK)$.
\end{proof}

\begin{remark}
The equation $|K|=q^2-u_0(DK)$ indicates that computing the size of a Kakeya set $K$ is equivalent to computing the non-hitting index of the dual Kakeya set $DK$. There has been some recent progress on characterizing Kakeya sets by their sizes {\rm\cite{BDMS,BM,DM2}}. From the perspective of Lemma~\ref{lem-DK}, these work succeeded in characterizing $(q+2)$-sets with an internal nucleus by its non-hitting index.
\end{remark}

Given a polynomial $f$ over $\Fq$, let $S_f$ be the $(q+1)$-set defined in \eqref{eqn-q+1}. For $c \in \Fq$, by adding a point $\lan (1,c,0) \ran$ to $S_f$, we obtain a $(q+2)$-set with an internal nucleus $\lan (0,1,0) \ran$, which is a dual Kakeya set
$$
DK(f,c):=S_f \cup \{\lan (1,c,0) \ran\}=\{ \lan (x,f(x),1) \ran \mid x \in \Fq \} \cup \{ \lan (0,1,0),(1,c,0)\ran \}.
$$
The next proposition, together with Remark~\ref{rem-polyint}(2), shows that the intersection distribution of $DK(f,c)$ follows from the multiplicity distribution of $f$.

\begin{proposition}\label{prop-intdisKconnection}
For a polynomial $f$ over $\Fq$ and $c \in \Fq$, the intersection distribution of $DK(f,c)$ is as follows.
\begin{align*}
&u_0(DK(f,c))=v_0(f)-M_0(f,c), \\
&u_1(DK(f,c))=v_1(f)-M_1(f,c)+M_0(f,c), \\
&u_2(DK(f,c))=v_2(f)-M_2(f,c)+M_1(f,c)+q+1, \\
&u_i(DK(f,c))=v_i(f)-M_i(f,c)+M_{i-1}(f,c), \,\mbox{for each $3 \le i \le q$}, \\
&u_{q+1}(DK(f,c))=M_q(f,c).
\end{align*}
\end{proposition}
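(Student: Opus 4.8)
The plan is to realize $DK(f,c)$ as the one-point extension $S_f \cup \{P_c\}$ with $P_c=\lan(1,c,0)\ran$, and to track how adjoining this single point perturbs the intersection distribution of $S_f$, which is already linked to $(v_i(f))$ by Proposition~\ref{prop-intdispoly}. First I would record the elementary bookkeeping principle: adjoining one point $P_c$ increases by exactly one the number of intersection points of every line through $P_c$, while leaving the count unchanged for every line avoiding $P_c$. Writing $t_i$ for the number of lines through $P_c$ meeting $S_f$ in exactly $i$ points, this yields the recurrence
$$
u_j(DK(f,c))=\big(u_j(S_f)-t_j\big)+t_{j-1}
$$
valid for all $j$, with the convention $t_{-1}=0$.

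The crux is to compute the numbers $t_i$ from the multiplicity distribution of $f$ at $c$. There are $q+1$ lines through $P_c$. Since $P_c$ lies on the line at infinity $\lan(0,0,1)^T\ran$, one of these is the line at infinity itself, which meets $S_f$ only in the internal nucleus $\lan(0,1,0)\ran$ and hence contributes a single line to $t_1$. The remaining $q$ lines are the affine lines of slope $c$, namely $\lan(c,-1,b)^T\ran$ for $b\in\Fq$, consisting of the points $\lan(x,y,1)\ran$ with $y=cx+b$. Such a line meets the affine part $\{\lan(x,f(x),1)\ran\}$ of $S_f$ precisely in the points with $f(x)-cx=b$, and it avoids the nucleus. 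Therefore the number of these slope-$c$ lines meeting $S_f$ in exactly $i$ points equals the number of $b\in\Fq$ occurring $i$ times in the multiset $\{f(x)-cx\mid x\in\Fq\}$, which is exactly $M_i(f,c)$. Combining the two types of lines gives $t_0=M_0(f,c)$, $t_1=M_1(f,c)+1$, and $t_i=M_i(f,c)$ for every $i\ge 2$; in particular $t_{q+1}=M_{q+1}(f,c)=0$ because the affine part has only $q$ points.

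Finally I would substitute these values of $t_i$, together with the identities $u_0(S_f)=v_0(f)$, $u_1(S_f)=v_1(f)+1$, $u_2(S_f)=v_2(f)+q$, $u_i(S_f)=v_i(f)$ for $3\le i\le q$, and $u_{q+1}(S_f)=0$ furnished by Proposition~\ref{prop-intdispoly}, into the recurrence, and read off the five claimed formulas. I expect the only delicate points to be purely clerical rather than conceptual: the extra $+1$ in $t_1$ arising from the line at infinity, combined with the constants $+1$ and $+q$ appearing in $u_1(S_f)$ and $u_2(S_f)$, is what produces the term $+q+1$ in the formula for $u_2(DK(f,c))$; and the equalities $u_{q+1}(S_f)=0$ together with $t_{q+1}=0$ leave $u_{q+1}(DK(f,c))=t_q=M_q(f,c)$. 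No genuine obstacle arises here, since the entire content of the argument is the identification of the slope-$c$ lines through $P_c$ with the fibres of the map $x\mapsto f(x)-cx$.
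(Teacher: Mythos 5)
Your proposal is correct and follows essentially the same line-counting argument as the paper: both identify the slope-$c$ lines through $\lan (1,c,0) \ran$ with the fibres of $x \mapsto f(x)-cx$ (giving the $M_i(f,c)$ terms) and the vertical lines through the internal nucleus as tangents to the graph (giving the $+q+1$). The only cosmetic difference is that you adjoin the single point $P_c$ to $S_f$ and invoke Proposition~\ref{prop-intdispoly}, whereas the paper adds both special points to the graph $G_f$ at once; the bookkeeping is identical.
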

\begin{proof}
Embed the graph of $f$ into $PG(2,q)$, so that $G_f=\{\lan (x,f(x),1) \ran \mid x \in \Fq\}$. To compute the intersection distribution of $DK(f,c)$ from that of $f$, it suffices to consider how the $2q+1$ lines through $\lan (0,1,0)\ran$ and $\lan (1,c,0) \ran$ intersect $G_f$. The two points $\lan (0,1,0)\ran$ and $\lan (1,c,0) \ran$ determine the line at infinity $\ell$. Except $\ell$, there are $q$ lines $\{\lan (1,0,b)^T \ran \mid b \in \Fq\}$ trough $\lan (0,1,0)\ran$, so that each of them intersects $G_f$ in one point. This explains the $q+1$ term in the expression of $u_2(DK(f,c))$. Except $\ell$, there are $q$ lines $\{\lan (-c,1,-b)^T \ran \mid b \in \Fq\}$ through $\lan (1,c,0) \ran$. By definition, for each $0 \le i \le q$, there are exactly $M_i(f,c)$ lines in the set $\{\lan (-c,1,-b)^T \ran \mid b \in \Fq\}$, intersecting $G_f$ in $i$ points. Meanwhile, each of these $M_i(f,c)$ lines intersects $DK(f,c)$ in $i+1$ points. This explains the $M_i(f,c)$ terms in the intersection distribution.
\end{proof}

\ra{1.5}
\begin{longtable}{|c|c|c|c|}
\caption{The intersection distribution of the dual Kakeya set $DK(d,c)$ and the size of the Kakeya set $K$, where $c \in \Fq$ and $q=p^s$ for a prime $p$}
\label{tab-interdisKakeya}
\\ \hline
$d$ & $c$ & Intersection Distribution of $DK(d,c)$ and $|K|$ \\ \hline
$d=p^i$ & \multirow{2}{*}{$c \in C_{0}^{(p^h-1,q)}$}  & $u_0=p^{s}(p^{s-h}-1)$, $u_1=\frac{p^{s-h}(p^{s+2h}-2p^{s+h}+p^{2h}-p^h+1)}{p^h-1}$, $u_2=p^s+1$, \\
$0 \le i \le s-1$ & & $u_{p^h}=\frac{p^s(p^{s-h}-1)}{p^h-1}$, $u_{p^h+1}=p^{s-h}$, $|K|=p^{2s}-p^{2s-h}+p^s$ \\ \cline{2-3}
$h=\gcd(i,s)$ & \multirow{2}{*}{$c \notin C_{0}^{(p^h-1,q)}$}  &  $u_0=p^{s-h}(p^s-1)$, $u_1=\frac{p^s(p^s-1)(p^h-2)}{p^h-1}$, $u_2=2p^s+1$, \\
& & $u_{p^h}=\frac{p^{s-h}(p^s-1)}{p^h-1}$, $|K|=p^{2s}-p^{2s-h}+p^{s-h}$ \\ \hline
&  \multirow{3}{*}{$c=0$}  & $u_0=\frac{p^h(p^s-1)^2}{2(p^h+1)}$, $u_1=\frac{(p^s+p^{s-h}+p^h)(p^s-1)}{p^h+1}$, \\
$d=p^i+1$  &  & $u_2=\frac{p^{2s+h}-2p^{2s}+2p^{s+h}+3p^h-4}{2(p^h-1)}$, $u_{p^h+1}=\frac{(p^{s-h}-p^h)(p^s-1)}{p^{2h}-1}$, \\
$0 \le i \le s-1$ & & $u_{p^h+2}=\frac{p^s-1}{p^h+1}$, $|K|=p^{2s}-\frac{p^h(p^s-1)^2}{2(p^h+1)}$ \\ \cline{2-3}
$l_2(i)<l_2(s)$ &  \multirow{4}{*}{$c\ne0$}  & $u_0=\frac{p^{s+h}(p^s-1)}{2(p^h+1)}$, $u_1=\frac{p^h(p^s-1)}{2(p^h+1)}+p^{2s-h}-2p^{s-h}+1$, \\
$h=\gcd(i,s)$ &   & $u_2=\frac{(p^h-2)(p^s-1)(p^s-2)}{2(p^h-1)}+2p^s+p^{s-h}-1$, $u_3=\frac{p^{s+h}-2p^s+p^h}{2(p^h-1)}$, \\
&   & $u_{p^h+1}=\frac{p^{2s-h}-p^s-2p^{s-h}+p^h+1}{p^{2h}-1}$, $u_{p^h+2}=\frac{p^{s-h}-p^h}{p^{2h}-1}$, \\
&   & $|K|=p^{2s}-\frac{p^{s+h}(p^s-1)}{2(p^h+1)}$ \\ \hline
$d=p^i+1$&  \multirow{2}{*}{$c=0$}  & $u_0=\frac{p^{h}(p^{2s}-1)}{2(p^h+1)}$, $u_1=(p^{s-h}-1)(p^s-1)$, $u_2=\frac{(p^h-2)(p^s-1)^2}{2(p^h-1)}+3p^s$, \\
$p=2$ &   & $u_{p^h+1}=\frac{(p^{s-h}-1)(p^s-1)}{p^{2h}-1}$, $|K|=p^{2s}-\frac{p^{h}(p^{2s}-1)}{2(p^h+1)}$ \\ \cline{2-3}
$0 \le i \le s-1$ &  \multirow{3}{*}{$c\ne0$}  & $u_0=\frac{p^{h}(p^s+1)(p^s-2)}{2(p^h+1)}$, $u_1=\frac{p^h(p^s+1)}{2(p^h+1)}+p^{2s-h}-2p^{s-h}+2$, \\
$l_2(i) \ge l_2(s)$&   & $u_2=\frac{(p^h-2)(p^s-1)(p^s-2)}{2(p^h-1)}+2p^s+p^{s-h}-2$, $u_3=\frac{p^{s+h}-2p^s+p^h}{2(p^h-1)}$, \\
$h=\gcd(i,s)$ &   & $u_{p^h+1}=\frac{(p^{s-h}-1)(p^s-2)}{p^{2h}-1}$, $u_{p^h+2}=\frac{p^{s-h}-1}{p^{2h}-1}$, $|K|=p^{2s}-\frac{p^{h}(p^s+1)(p^s-2)}{2(p^h+1)}$ \\ \hline
$d=p^i+1$&  \multirow{3}{*}{$c=0$}  & $u_0=\frac{(p^{s+h}-1)(p^s-1)}{2(p^h+1)}$, $u_1=\frac{(2p^{s-h}+1)(p^s-1)}{2}$, \\
$p$ odd&   & $u_2=\frac{p^{2s+h}-2p^{2s}+p^{s+h}+p^s+4p^h-5}{2(p^h-1)}$, $u_3=\frac{p^s-1}{2}$, \\
$0 \le i \le s-1$& & $u_{p^h+1}=\frac{(p^{s-h}-1)(p^s-1)}{p^{2h}-1}$, $|K|=p^{2s}-\frac{(p^{s+h}-1)(p^s-1)}{2(p^h+1)}$ \\ \cline{2-3}
$l_2(i) \ge l_2(s)$ &  \multirow{3}{*}{$c\ne0$}  & $u_0=\frac{p^{2s+h}-p^{s+h}-p^h+1}{2(p^h+1)}$, $u_1=\frac{p^{s+h}-1}{2(p^h+1)}+p^{2s-h}-2p^{s-h}+1$, \\
$h=\gcd(i,s)$&   & $u_2=\frac{p^{2s+h}-2p^{2s}+p^{s+h}+4p^s-2p^{s-h}+p^h-3}{2(p^h-1)}$, $u_3=\frac{p^{s+h}-2p^s+1}{2(p^h-1)}$, \\
&   & $u_{p^h+1}=\frac{(p^{s-h}-1)(p^s-2)}{p^{2h}-1}$, $u_{p^h+2}=\frac{p^{s-h}-1}{p^{2h}-1}$, $|K|=p^{2s}-\frac{p^{2s+h}-p^{s+h}-p^h+1}{2(p^h+1)}$ \\ \hline
$d=\frac{q-1}{2}$ & $c=0$ & $u_0=\frac{q^2+2q-3}{4}$, $u_1=\frac{q^2-2q-3}{2}$, $u_2=\frac{q^2+6q+5}{4}$, $u_{\frac{q+1}{2}}=2$, $|K|=\frac{3q^2-2q+3}{4}$ \\ \cline{2-3}
$q \equiv 1 \bmod4$  & \multirow{2}{*}{$c \ne 0$} & $u_0=\frac{q^2+5q-18}{4}$, $u_1=\frac{2q^2-9q+19}{4}$, $u_2=\frac{q^2+7q-8}{4}$, \\
& & $u_3=\frac{q+3}{4}$, $u_{\frac{q-1}{2}}=2$, $|K|=\frac{3q^2-5q+18}{4}$  \\ \hline
$d=\frac{q-1}{2}$ & \multirow{2}{*}{$c=0$} & \multirow{2}{*}{$u_0=\frac{q^2-1}{4}$, $u_1=\frac{q^2+q-6}{2}$, $u_2=\frac{q^2+11}{4}$, $u_3=\frac{q-1}{2}$, $u_{\frac{q+1}{2}}=2$, $|K|=\frac{3q^2+1}{4}$}  \\
$q \equiv 3 \bmod4$  &  &  \\ \hline
  & \multirow{2}{*}{$c \in C_0^{(2,q)}$} & $u_0=\frac{q^2+3q-18}{4}$, $u_1=\frac{2q^2-3q+15}{4}$, $u_2=\frac{q^2+q+4}{4}$,\\
$d=\frac{q-1}{2}$ & & $u_3=\frac{3q-9}{4}$, $u_4=1$, $u_{\frac{q-1}{2}}=2$, $|K|=\frac{3q^2-3q+18}{4}$  \\ \cline{2-3}
$q \equiv 3 \bmod8$ & \multirow{2}{*}{$c \in C_1^{(2,q)}$} & $u_0=\frac{q^2+3q-10}{4}$, $u_1=\frac{2q^2-3q-5}{4}$, $u_2=\frac{q^2+q+16}{4}$,\\
& & $u_3=\frac{3q-5}{4}$, $u_{\frac{q-1}{2}}=2$, $|K|=\frac{3q^2-3q+10}{4}$ \\ \hline

 & \multirow{2}{*}{$c \in C_0^{(2,q)}$} & $u_0=\frac{q^2+3q-14}{4}$, $u_1=\frac{2q^2-3q+3}{4}$, $u_2=\frac{q^2+q+16}{4}$, \\
$d=\frac{q-1}{2}$ & & $u_3=\frac{3q-13}{4}$, $u_4=1$, $u_{\frac{q-1}{2}}=2$, $|K|=\frac{3q^2-3q+14}{4}$ \\ \cline{2-3}
$q \equiv 7 \bmod8$ & \multirow{2}{*}{$c \in C_1^{(2,q)}$} & $u_0=\frac{q^2+3q-14}{4}$, $u_1=\frac{2q^2-3q+7}{4}$, $u_2=\frac{q^2+q+4}{4}$,\\ & & $u_3=\frac{3q-1}{4}$, $u_{\frac{q-1}{2}}=2$, $|K|=\frac{3q^2-3q+14}{4}$ \\ \hline

$d=\frac{q+1}{2}$ & $c \in \{0\} \cup$ & \multirow{4}{*}{$u_0=\frac{q^2+2q-3}{4}$, $u_1=\frac{q^2-2q-3}{2}$, $u_2=\frac{q^2+6q+5}{4}$, $u_{\frac{q+1}{2}}=2$, $|K|=\frac{3q^2-2q+3}{4}$}   \\
$q \equiv 1 \bmod4$ & $C_{0,0}^q \cup C_{1,1}^q$ & \\ \cline{1-2}
$d=\frac{q+1}{2}$ & $c \in C_{0,1}^q \cup$ & \\
$q \equiv 3 \bmod4$ & $C_{1,0}^q$ & \\ \hline

$d=\frac{q+1}{2}$ & $c \in C_{0,1}^q \cup$ & \multirow{4}{*}{$u_0=\frac{q^2-1}{4}$, $u_1=\frac{q^2+q-6}{2}$, $u_2=\frac{q^2+11}{4}$, $u_3=\frac{q-1}{2}$, $u_{\frac{q+1}{2}}=2$, $|K|=\frac{3q^2+1}{4}$} \\
$q \equiv 1 \bmod4$ & $C_{1,0}^q$ & \\ \cline{1-2}
$d=\frac{q+1}{2}$ &$c \in \{0\} \cup$ & \\
$q \equiv 3 \bmod4$ & $C_{0,0}^q \cup C_{1,1}^q$ & \\ \hline

$d=\frac{q+1}{2}$ & \multirow{2}{*}{$c=\pm1$} & \multirow{2}{*}{$u_0=\frac{q^2-1}{4}$, $u_1=\frac{q^2-3}{2}$, $u_2=\frac{q^2+4q+3}{4}$, $u_{\frac{q+1}{2}}=1$, $u_{\frac{q+3}{2}}=1$, $|K|=\frac{3q^2+1}{4}$} \\
$q$ odd & & \\ \hline

$d=q-2$ & $c=0$ & $u_0=\frac{q(q-1)}{2}$, $u_2=\frac{(q+1)(q+2)}{2}$, $|K|=\frac{q(q+1)}{2}$  \\ \cline{2-3}
$q$ even & $c \ne 0$ & $u_0=\frac{q(q-2)}{2}$, $u_1=\frac{3q}{2}$, $u_2=\frac{q^2}{2}+1$, $u_3=\frac{q}{2}$, $|K|=\frac{q(q+2)}{2}$ \\ \hline

 & $c=0$ & $u_0=\frac{(q-1)^2}{2}$, $u_1=\frac{3(q-1)}{2}$, $u_2=\frac{q^2+5}{2}$, $u_3=\frac{q-1}{2}$, $|K|=\frac{q^2+2q-1}{2}$ \\ \cline{2-3}
$d=q-2$ & \multirow{2}{*}{$c \in C_0^{(2,q)}$} & $u_0=\frac{(q-1)(q-2)}{2}$, $u_1=3q-4$, $u_2=\frac{q^2-3q+14}{2}$, \\
$q \equiv 1 \bmod{4}$ & & $u_3=q-4$, $u_4=1$, $|K|=\frac{q^2+3q-2}{2}$ \\ \cline{2-3}
& $c \in C_1^{(2,q)}$ & $u_0=\frac{(q-1)(q-2)}{2}$, $u_1=3q-3$, $u_2=\frac{q^2-3q+8}{2}$, $u_3=q-1$, $|K|=\frac{q^2+3q-2}{2}$ \\ \hline

$d=q-2$ & $c=0$ & $u_0=\frac{(q-1)^2}{2}$, $u_1=\frac{3(q-1)}{2}$, $u_2=\frac{q^2+5}{2}$, $u_3=\frac{q-1}{2}$, $|K|=\frac{q^2+2q-1}{2}$ \\ \cline{2-3}
$q \equiv 3 \bmod{4}$ & $c \in C_0^{(2,q)}$ & $u_0=\frac{q^2-3q}{2}$, $u_1=3q-1$, $u_2=\frac{q^2-3q+8}{2}$, $u_3=q-3$, $u_4=1$, $|K|=\frac{q^2+3q}{2}$ \\ \cline{2-3}
& $c \in C_1^{(2,q)}$ & $u_0=\frac{q^2-3q+4}{2}$, $u_1=3q-6$, $u_2=\frac{q^2-3q+14}{2}$, $u_3=q-2$, $|K|=\frac{q^2+3q-4}{2}$ \\ \hline

\multirow{3}{*}{$d=q-1$} & $c=0$ & $u_0=q-1$, $u_1=q^2-2q$, $u_2=2q+1$, $u_q=1$, $|K|=q^2-q+1$ \\ \cline{2-3}
                         & \multirow{2}{*}{$c \ne 0$} & $u_0=2q-4$, $u_1=q^2-4q+6$, $u_2=3q-3$, \\
                         &                            & $u_3=1$, $u_{q-1}=1$, $|K|=q^2-2q+4$ \\ \hline
\end{longtable}

If $f$ is a monomial $x^d$, then we write $DK(d,c):=DK(x^d,c)$. We list in Table~\ref{tab-interdisKakeya} the intersection distributions of several classes of dual Kakeya sets derived from monomials, and the sizes of corresponding Kakeya sets. More specifically, in Appendix~\ref{sec-appendixB}, we compute the multiplicity distribution of monomials $x^d$ over finite field $\Fq$ with $q=p^s$ and $p$ being prime, where $d \in \{p^i,p^i+1,\frac{q-1}{2},\frac{q+1}{2},q-2,q-1\}$, $0 \le i \le s-1$. As a consequence, the intersection distribution in Table~\ref{tab-interdisKakeya} follows from Remark~\ref{rem-polyint}(2) and Propositions~\ref{prop-intdisKconnection}. Given a finite field $\Fq$ and a positive integer $N$, we use $C_0^{(N,q)}$ to denote the set consisting of nonzero $N$-th powers in $\Fq$. For a positive integer $i$, we use $l_2(i)$ to denote the largest nonnegative integer, such that $2^{l_2(i)} \mid i$. We also define $l_2(0)=+\infty$.

In \cite[Table 1]{DM2}, an exhaustive computer search presented all attainable sizes of Kakeya sets in $PG(2,q)$ with $q \le 9$. We are going to show that many of these attainable sizes can be achieved by the dual Kakeya sets $DK(d,c)$ derived from monomials. In Table~\ref{tab-monoKakeya}, we list all sizes of Kakeya sets in $PG(2,q)$ with $q \le 9$. In the second column, an entry $(k,D)$ means a Kakeya set of size $k$ can be derived from $DK(d,c)$, for each $d \in D$ and some properly chosen $c$. If $D$ is an empty set, that means for each $1 \le d \le q-1$ and $c \in \Fq$, the dual Kakeya set $DK(d,c)$ never produces a Kakeya set of size $k$.

\begin{table}[H]
\begin{center}
\caption{All attainable sizes of Kakeya sets in $PG(2,q)$ and those realizable by dual Kakeya set $DK(d,c)$, $q \le 9$}
\label{tab-monoKakeya}
\begin{tabular}{|c|l|}
\hline
$q$ & attainable size using $DK(d,c)$ \\ \hline
$2$  &  $(3,\{1\})$, $(4,\{1\})$   \\ \hline
$3$  &  $(7,\{1,2\})$, $(9,\{1\})$   \\ \hline
$4$  &  $(10,\{2\})$, $(12,\{2,3\})$, $(13,\{1,3\})$, $(16,\{1\})$   \\ \hline
$5$  &  $(17,\{2,3\})$, $(18,\es)$, $(19,\{3,4\})$, $(21,\{1,4\})$, $(25,\{1\})$   \\ \hline
\multirow{2}{*}{$7$}  &  $(31,\{2,5\})$, $(32,\es)$, $(33,\{5\})$, $(34,\{4\})$, $(35,\{3,5\})$, $(36,\es)$, $(37,\{3,4\})$, $(39,\{6\})$, \\
& $(43,\{1,6\})$, $(49,\{1\})$ \\ \hline
\multirow{2}{*}{$8$}  &  $(36,\{2,4,6\})$, $(40,\{2,4,6\})$, $(42,\es)$, $(43,\{3,5\})$, $(44,\es)$, $(45,\es)$, $(46,\{3,5\})$, $(47,\es)$, \\
& $(48,\es)$, $(49,\es)$, $(52,\{7\})$, $(57,\{1,7\})$, $(64,\{1\})$ \\ \hline
\multirow{2}{*}{$9$}  &  $(49,\{2,7\})$, $(51,\es)$, $(52,\es)$, $(53,\{7\})$, $(54,\{4\})$, $(55,\es)$, $(56,\{6\})$, $(57,\{3,4,5,6\})$, \\
& $(58,\es)$, $(59,\es)$, $(60,\es)$, $(61,\{5\})$, $(62,\es)$, $(63,\{3\})$, $(67,\{8\})$, $(73,\{1,8\})$, $(81,\{1\})$ \\ \hline
\end{tabular}
\end{center}
\end{table}

We note that except the case of $(q,d)=(9,6)$, all sizes of Kakeya sets derived from $DK(d,c)$ in Table~\ref{tab-monoKakeya} can be explained by the result in Table~\ref{tab-interdisKakeya}. Moreover, we can see that by simply using monomials, the polynomial approach involving the dual Kakeya set $DK(f,c)$ already covers many attainable sizes of Kakeya sets. This justifies that the polynomial viewpoint is an effective way to study Kakeya sets.

\section{Related work and some open problems}\label{sec-5}

In this section, we mention some work related to the concepts of intersection distribution and non-hitting index. As a conclusion, we think these new concepts are interesting and deserve further investigation.

In Section~\ref{sec-1}, some $(q+1)$-sets have been characterized by their non-hitting index. Moreover, some work related to Kakeya sets essentially characterized certain $(q+2)$-set with an internal nucleus by its non-hitting index \cite{BDMS,BM,DM2}. A natural question is, can we do more along this direction? In particular, when a $(q+1)$-set or $(q+2)$-set has no internal nucleus, very little is known.

\flushleft{{\bf Open Problem 1:} Find more $(q+1)$ and $(q+2)$-sets which can be characterized by their non-hitting index. When the non-hitting index is far away from the lower and upper bounds, knowing the non-hitting index alone is not sufficient to determine the point set. In this case, we may ask what is the additional information about the intersection distribution, which is sufficient to characterize a point set.}

\flushleft{{\bf Open Problem 2:} When $q$ is odd, for a $(q+1)$-set $S$, completely determine the second largest value of $u_0(S)$. A similar problem is the determination of an upper bound on $u_0(T)$, where $T$ is a $(q+2)$-set without internal nuclei.}

\vspace{5pt}

Note that when $u_0(S_f)$ achieves the upper bound $\frac{q(q-1)}{2}$, the corresponding polynomial $f$ satisfies some favorable property, such as being an o-polynomial. It is interesting to determine the polynomial $f$ associated with a $(q+1)$-set $S$, where $u_0(S)$ is close to the upper bound.

\flushleft{{\bf Open Problem 3:} Find polynomial representations for the $(q+1)$-sets in Examples~\ref{exam-evenlarge},~\ref{exam-oddlarge}. Find proper polynomials $f$ and field elements $c$ generating dual Kakeya sets $DK(f,c)$, which lead to Kakeya sets in Table~\ref{tab-monoKakeya} and cannot be realized by monomials.

\vspace{5pt}

Given a polynomial $f$, recall that $M_i(f,c)$ equals the number of elements occurring $i$ times in the multiset $\{f(x)-cx \mid x \in \Fq\}$. Since $v_i(f)=\sum_{c \in \Fq} M_i(f,c)$, the intersection distribution and the non-hitting index of $f$ reflect the collective behaviour of the $q$ value sets $V_{f,c}=\{f(x)+cx \mid x \in \Fq\}$, where $c \in \Fq$. In this regard, there has been some closely related research.
\begin{itemize}
\item[(1)] Recall that $N_f=\{c \in \Fq \mid |V_{f,c}|=q\}$. Motivated by the investigation of complete mappings and Latin squares, Evans, Greene and Niederreiter initiated the study concerning the size of $N_f$ \cite{EGN}. Deep algebraic tools have been used in subsequent papers \cite{Tur,WMS}. Moreover, some more involved invariants other than the degree were used to bound the size of value set $V_{f,c}$ \cite{Tur,WSC}. We mention that complete mappings defined over finite fields are also called complete permutation polynomials.
\item[(2)] Counting the size of $N_f$ is equivalent to counting the number of directions determined by the graph of $f$, see for instance \cite{BBBSS}. It turns out the size of $N_f$ not only falls in a collection of disjoint intervals, but also gives information about $|V_{f,c}|$ where $c \notin N_f$, see for instance \cite[Theorem 1.1]{BBBSS}.
\item[(3)] In order to determine the intersection distribution of a polynomial $f$, we need much more information than the size of $N_f$. Recently, following some powerful construction frameworks of permutation polynomials over finite fields \cite{AGW,CK,YD,YD2,Z}, the set $N_f$ has been determined for quite a few families of polynomials $f$. Instead of supplying an exhaustive list of references concerning such polynomials, we refer to two recent excellent surveys \cite{Hou}, \cite[Section 5]{LZ}. Given a polynomial $f$, with the information about $N_f$, one can proceed to study the value set $V_{f,c}$ where $c \notin N_f$, which may lead to the intersection distribution of $f$.
\end{itemize}
For the next two open problems, the ideas in the above references could be helpful.

\flushleft{{\bf Open Problem 4:} Employ more advanced techniques to improve the lower and upper bounds on the non-hitting index of a polynomial in Proposition~\ref{prop-polybound}.}

\vspace{5pt}

\flushleft{{\bf Open Problem 5:} Determine the non-hitting index and the intersection distribution of other classes of polynomials over finite field. The non-hitting index of monomials in Table~\ref{tab-monononhitting}, which do not have a theoretical explanation so far, could be a good place to start.}

\vspace{5pt}

Suppose $f$ is a so-called Dembowski-Ostrom polynomial over $\Fq$. It has been shown in \cite[Theorem 2.3]{WZ} that $f$ is planar if and only if $f$ is $2$-to-$1$. Note that $f$ is $2$-to-$1$ if and only if $M_2(f,0)=\frac{q}{2}$. This surprising result indicates that even partial information of the multiplicity distribution of a polynomial may imply certain strong properties. More generally, we are interested in the connection between the multiplicity distribution and other properties of polynomials. In addition, the projective equivalence raised in Definition~\ref{def-projequiv} seems to be a very natural one. As explained in Remark~\ref{rem-measureconnection}, the projective equivalence offers a new geometrical angle to distinguish polynomials over finite fields. Since the intersection distribution is an invariant of the projective equivalence, one question is, to what extent, the intersection distribution determines the projective equivalence.

%

\flushleft{{\bf Open Problem 6:} Explore the connection between the multiplicity distribution and other properties of polynomials over finite fields. }

\vspace{5pt}

\flushleft{{\bf Open Problem 7:} Explore the relation between the intersection distribution and the projective equivalence. For instance, do there exist projectively inequivalent polynomials having the same intersection distribution?}

\vspace{5pt}

We mention an interesting work due to Coulter and Senger \cite{CS}. Given two sets $A$ and $B$, the authors considered a function $f:A \rightarrow B$, and defined $N_2(f)$ to be the number of pairs $(x,y) \in A \times A$, such that $x \ne y$ and $f(x)=f(y)$. Some bounds on the size of the value set $\{f(x) \mid x \in A \}$, which involves the number $N_2(f)$, were derived in \cite{CS}. A very recent work due to Ding and Tang \cite{DT} employs polynomial $f$ over finite field to construct combinatorial $t$-designs. As pointed out in \cite{DT}, determining the parameters of the $t$-design associated with a polynomial $f$ is difficult in general. On the other hand, we remark that the multiplicity distribution of $f$ implies the parameters of the associated $t$-design. Therefore, this fresh design-theoretic application offers one more motivation to study the multiplicity distribution of polynomials over finite fields. Finally, some recent progress on the intersection and multiplicity distributions of degree three polynomials has been presented in \cite{KLP}.

\section*{Appendix}

\appendix

\section{Proofs of results in Section~\ref{sec-2}}\label{sec-appendixA}

In this appendix, we present proofs of several results in Section~\ref{sec-2}. The following is the proof of Lemma~\ref{lem-upperboundstructure}.

\begin{proof}[Proof of Lemma~\ref{lem-upperboundstructure}]
Suppose $S=\{P_i \mid 0 \le i \le q\}$. Without loss of generality, we can assume that $A=\{P_i \mid 0 \le i \le k-1\}$ and $B=\{P_{k-1+i} \mid 0 \le i \le l\} \sm \{P_{k-1}\}$, where $0 \le l \le q+1-k$. For $0 \le j \le q$, define $S_j=\{ P_i \mid 0 \le i \le j \}$ to be the subset of $S$ consisting of the first $j+1$ points. In order to compute the number of lines intersecting $S$, we do the calculation in a recursive manner by computing the number of lines intersecting $S_j$, where $j$ ranges from $0$ to $q$.

For $1 \le j \le q$, there are at least $q+1-j$ external lines to $S_{j-1}$ through $P_j$. For $1 \le j \le q$, let $m_j$ be a nonnegative integer, such that there being $q+1-j+m_j$ external lines to $S_{j-1}$ through $P_j$. Clearly, $q+1-j+m_j= \sum_{l=1}^{q+1} u_l(S_j)-\sum_{l=1}^{q+1} u_l(S_{j-1})$. This means that adding $P_j$ to the set $\{P_i \mid 0 \le i \le j-1\}$ introduces $q+1-j+m_j$ lines intersecting $S_j$ and not intersecting $S_{j-1}$. Note that $m_j=0$ if and only if there are $j$ tangent lines to $S_{j-1}$ through $P_j$, and $m_j>0$ if and only if there exists at least one secant line to $S_{j-1}$ through $P_j$. For the sake of simplicity, we also define $m_0=0$. Thus, we can see that $\sum_{l=1}^{q+1} u_l(S)=\sum_{j=0}^{q} (q+1-j+m_j)=\frac{(q+1)(q+2)}{2}+\sum_{j=0}^{q} m_j$. Since $A$ is an $S$-maximal $k$-arc, we have $m_j=0$ for each $0 \le j \le k-1$ and $u_0(S)=\frac{q(q-1)}{2}-\sum_{j=k}^q m_j$.

(1) The proof is analogous to that of \cite[Lemma 1.1]{BB}. Suppose for each $P_j \in S \sm A$, there are exactly $\mu_j$ tangent lines to $A$ through $P_j$, where $\mu_j \le \la$. Note that there are $k-\mu_j$ points of $A$ not on these $\mu_j$ tangent lines. Thus, there are at most $\frac{k-\mu_j}{2}$ secant lines to $A$ through $P_j$. Thus, there are at most $\mu_j+\frac{k-\mu_j}{2}=\frac{k+\mu_j}{2}\le\frac{k+\la}{2}$ lines through $P_j$ intersecting $A$, which implies $m_j \ge k-\frac{k+\la}{2}=\frac{k-\la}{2}$, where $j \ge k$. Consequently, we have $u_0(S) \le \frac{q(q-1)}{2}-(q+1-k)\frac{k-\la}{2}$.

(2) The proof is analogous to that of \cite[Lemma 3.2]{BDMS}. For each $k \le j \le k-1+l$, we have $P_j \in B$ and by the definition of $B$, there exists a $2$-secant line to $A$ through $P_j$, which implies $m_j \ge 1$. For each $k+l \le j \le q$, we have $P_j \in S \sm (A \cup B)$. Then,  either there exists $P_l \in B$, such that $P_j$ is on the unique $2$-secant line to $A$ through $P_l$, or there are at least two secant lines to $A$ through $P_j$. In both cases, we have $m_j \ge 2$. Hence,
\begin{align*}
u_0(S)=\frac{q(q-1)}{2}-\sum_{j=k}^q m_j &=\frac{q(q-1)}{2}-\sum_{j=k}^{k-1+l} m_j-\sum_{j=k+l}^q m_j \\
                                         &\le \frac{q(q-1)}{2}-l-2(q+1-k-l)=\frac{q(q-1)}{2}-2(q+1)+2k+l.
\end{align*}
\end{proof}

The next is the proof of Lemma~\ref{lem-numtangent}.

\begin{proof}[Proof of Lemma~\ref{lem-numtangent}]
(1) Since $A$ is an $S$-maximal arc, through each point $P \in S \sm A$, there exists at least one 2-secant line to $A$. Thus, there are at most $k-2$ tangent lines to $A$ through $P$.

(2) This is a direct consequence of \cite[Theorem 10.1]{Hirs}, see also the proof of \cite[Theorem 1.2]{BB}.

(3) This is a direct consequence of \cite[Theorem 10.4]{Hirs}, see also the proof of \cite[Theorem 1.3]{BB}.

(4) Each point $P \in B$ corresponds to a pair of points $Q_1, Q_2 \in A$, such that $P \in \ol{Q_1Q_2}$. Suppose through each point of $A$, there exists at most one $2$-secant line to $A$, which contains one point of $B$. Then the $l$ points in $B$ correspond to $2l$ distinct points of $A$. Consequently, we have $2l \le k$.
\end{proof}

For $q \in \{2,4,8\}$, the following lemma provides a detailed treatment concerning $(q+1)$-sets $S$ satisfying $u_0(S) \in \{\frac{q(q-1)}{2}-\frac{q}{2},\frac{q(q-1)}{2}-\frac{q}{2}+1\}$, which is interesting in view of Theorem~\ref{thm-evenupper}.

\begin{lemma}\label{lem-evensmall}
Let $q \in \{2,4,8\}$. Let $S$ be a $(q+1)$-set and which is not a $(q+1)$-arc. Then we have the following.
\begin{itemize}
\item[(1)] If $u_0(S)=\frac{q(q-1)}{2}-\frac{q}{2}+1$, then one of the following holds.
  \begin{itemize}
    \item[(1a)] When $q=4$, then $S$ is of the form in Example~\ref{exam-evenlarge}(1).
    \item[(1b)] When $q=8$, we have $u_0(S)=25$ if either $S$ is of the form in Example~\ref{exam-evenlarge}(1), or $S$ contains an $S$-maximal $6$-arc $A$, and $S \sm A$ is a pro-arc $3$-set with respect to $S$ and $A$, containing three noncollinear points, and the three lines determined by the points of $S \sm A$ are all external lines to $A$.
  \end{itemize}
\item[(2)]  If $u_0(S)=\frac{q(q-1)}{2}-\frac{q}{2}$, then one of the following holds.
    \begin{itemize}
    \item[(2a)] $S$ is of the form in Example~\ref{exam-evenlarge}(2).
    \item[(2b)] When $q=8$, we have $u_0(S)=24$ if one of the following holds:
        \begin{itemize}
            \item[$\bullet$] $S$ contains an $S$-maximal $7$-arc $A$, where $S \sm A=\{P_1,P_2\}$. For each $1 \le i \le 2$, there are exactly two $2$-secant lines to $A$ through $P_i$, and $\ol{P_1P_2}$ is an external line to $A$.
           \item[$\bullet$] $S$ contains an $S$-maximal $6$-arc $A$, and $S \sm A$ is a pro-arc $3$-set with respect to $S$ and $A$, containing three noncollinear points, and one of the three lines determined by the points of $S \sm A$ is a tangent line to $A$ and two of the three lines are external lines to $A$.
           \item[$\bullet$] $S$ contains an $S$-maximal $6$-arc $A$, and $S \sm A$ is a pro-arc $3$-set with respect to $S$ and $A$, containing three collinear points, and the unique line determined by the points of $S \sm A$ is an external line to $A$.
           \item[$\bullet$] $S$ contains an $S$-maximal $6$-arc $A$, where $S \sm A=\{P_1,P_2,P_3\}$ and these three points are not collinear. The set $\{P_1,P_2\}$ is a pro-arc $2$-set with respect to $S$ and $A$. The line $\ol{P_1P_3}$ is the unique $2$-secant line through $P_1$ to $A$ and there exists no other $2$-secant line to $A$ through $P_3$. The three lines $\{\ol{P_1P_2},\ol{P_1P_3},\ol{P_2P_3}\}$ are all external lines to $A$.
        \end{itemize}
    \end{itemize}
\end{itemize}
\end{lemma}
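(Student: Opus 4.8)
The plan is to combine the exact expression for $u_0(S)$ in Proposition~\ref{prop-upperbound} with the recursive bookkeeping developed in the proof of Lemma~\ref{lem-upperboundstructure}. Fixing an $S$-maximal $k$-arc $A$ and ordering the points of $S$ so that $A$ comes first, one has $u_0(S)=\frac{q(q-1)}{2}-\sum_{j=k}^{q}m_j$, where $m_j\ge 0$ records the excess of secant lines created when the $j$-th point is adjoined. The two target values $\frac{q(q-1)}{2}-\frac{q}{2}+1$ and $\frac{q(q-1)}{2}-\frac{q}{2}$ correspond to $\sum_{j=k}^{q}m_j\in\{\frac{q}{2}-1,\frac{q}{2}\}$, which for $q\in\{2,4,8\}$ is a very small budget. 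First I would invoke Lemma~\ref{lem-upperboundmaxarc}(1) to pin down which arc sizes $k$ are even compatible with such a large $u_0(S)$: for the larger value this forces $k=q$ when $q\in\{2,4\}$ and $k\in\{6,8\}$ when $q=8$, while for the smaller value a few more sizes ($k\in\{5,6,7,8\}$ when $q=8$) survive. This reduces the problem to a finite case analysis indexed by $k$.

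For each admissible $k$ I would then translate the budget constraint on $\sum_j m_j$ into geometric conditions on $S\sm A$. When $k=q$, the set $S=A\cup\{Q\}$ is precisely the configuration of Example~\ref{exam-evenlarge}, whose intersection distribution is computed there; matching $u_0(S)$ against the two values listed in that example accounts for (1a), the Example~\ref{exam-evenlarge}(1) alternative of (1b), and case (2a). When $k<q$ (which occurs only for $q=8$), I would use Lemma~\ref{lem-numtangent}(2) together with the inequality $m_j\ge\frac{k-\la}{2}$ from Lemma~\ref{lem-upperboundstructure}(1) to bound the per-point excess from below, so that the small global budget forces each adjoined point to be a pro-arc point contributing $m_j=1$ (for a $6$-arc) or $m_j=2$ (for a $7$-arc), and forces the mutual lines among the points of $S\sm A$ to avoid creating further secants. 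Spelling this out gives exactly the collinearity patterns and the external/tangent line conditions recorded in (1b) and in the four bullet items of (2b).

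Conversely, for each listed configuration I would run the recursive count forward to confirm that $\sum_j m_j$ equals the claimed value, thereby verifying that the stated intersection data indeed produce $u_0(S)\in\{24,25\}$ when $q=8$ and the analogous values when $q\in\{2,4\}$. A subtlety to watch is that an $S$-maximal arc need be neither unique nor of a unique size, so a single set $S$ may be reachable through several values of $k$; I would check that the descriptions are mutually consistent and that each $S$ is listed exactly once, in particular ruling out that a configuration ostensibly built from a $5$- or $6$-arc secretly contains a larger $S$-maximal arc that would reclassify it.

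The hard part will be the $q=8$ enumeration for the third largest value $u_0(S)=24$, where maximal arcs of sizes $5,6,7$ all remain feasible and the three (or two) extra points admit several genuinely distinct arrangements relative to the secant, tangent and external lines of $A$, as well as relative to each other. For these small fields the bounds of Lemmas~\ref{lem-upperboundstructure} through \ref{lem-upperboundmaxarc} are too coarse to isolate single configurations, so the argument must descend to an explicit, finite combinatorial verification of each $m_j$; the classification of hyperovals in $PG(2,8)$ and, where convenient, a direct computer check can be used to confirm that the four bullet items in (2b) exhaust the possibilities and that no spurious configuration is overlooked.
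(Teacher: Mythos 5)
Your overall strategy coincides with the paper's: restrict the admissible sizes $k$ of an $S$-maximal arc via Lemma~\ref{lem-upperboundmaxarc}(1) (your lists $k\in\{6,8\}$ for $u_0=25$ and $k\in\{5,6,7,8\}$ for $u_0=24$ when $q=8$ are exactly what the paper obtains), dispose of $k=q$ by matching against Example~\ref{exam-evenlarge}, use the recursive bookkeeping $u_0(S)=\frac{q(q-1)}{2}-\sum_{j\ge k}m_j$ for the remaining cases, and reduce $k=5$ by exhibiting a larger arc inside $S$ (the paper does this by finding a point $Q\in A$ with two $2$-secants meeting $S\sm A$, so that $(A\sm\{Q\})\cup\{P_1,P_2\}$ is a $6$-arc). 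The paper carries out the $k=6$ and $k=7$ enumerations entirely by hand, splitting on the size $l\in\{2,3\}$ of a pro-arc set and on the collinearity of $S\sm A$, with no appeal to the hyperoval classification or to a computer.

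There is one concrete misstep in your sketch that would cause the enumeration to come out wrong. You assert that the small budget ``forces each adjoined point to be a pro-arc point contributing $m_j=1$ (for a $6$-arc).'' That is true for $u_0(S)=25$ (budget $3$ over $3$ points, each $m_j\ge 1$), but for $u_0(S)=24$ with a $6$-arc the budget is $4$ over $3$ points, so the multiset of excesses is $\{1,1,2\}$ and the point with excess $2$ need not be a pro-arc point: its unique $2$-secant to $A$ may pass through a previously adjoined point, or it may have two $2$-secants to $A$ one of which is shared. This is precisely the configuration recorded in the fourth bullet of (2b), where $l=2$ and $P_3$ lies on the $2$-secant $\ol{P_1P_3}$ through $P_1$; the paper's Case B ($l=2$, points collinear or not, with parameters $x,y,z$ counting shared and unshared $2$-secants and tangent lines among $\{\ol{P_1P_2},\ol{P_1P_3},\ol{P_2P_3}\}$) is devoted to isolating it. Following your sketch literally, you would only search among pro-arc $3$-sets and would miss that case, so the forward enumeration must be widened to allow $l=2$ before the converse verification you describe can certify completeness.
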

\begin{proof}
The case $q \in \{2,4\}$ is easy to see. Below, we focus on the case $q=8$.

Suppose $S$ contains an $S$-maximal $k$-arc $A$. Let $B$ be a pro-arc $l$-set with respect to $S$ and $A$. In order to ensure $u_0(S) \in \{24,25\}$, by Proposition~\ref{prop-upperbound}, we can assume that $k<9$, namely $2 \le k \le 8$. If $k=8$, Example~\ref{exam-evenlarge}(1)(2) leads to (2a). For $2 \le k \le 7$, Lemma~\ref{lem-upperboundmaxarc}(1) implies $5 \le k \le 7$.

{\bf We first consider the case $k=7$}, where $S \sm A=\{P_1,P_2\}$. By Lemma~\ref{lem-upperboundmaxarc}(1), we have $u_0(S) \le 24$. By Lemma~\ref{lem-numtangent}(2), there are at most three tangent lines to $A$ through $P_i$, for $i \in \{1,2\}$. Note that the number of external lines to $A$ through $P_i$ is $4$, if there are exactly three tangent lines to $A$ through $P_i$, and the number of external lines to $A$ through $P_i$ is at least $5$, if there are at most two tangent lines to $A$ through $P_i$. We claim that $u_0(S)=24$ implies that there are exactly three tangent lines to $A$ through $P_i$, for $i \in \{1,2\}$, Otherwise, for instance, there are at least $4$ external lines to $A$ through $P_1$ and at least $5$ external lines to $A$ through $P_2$. In total, there are at least $8$ \emph{distinct} external lines to $A$ through $P_1$ or $P_2$. Consequently, the number of lines intersecting $S$ is at least $21+21+8=50$, where we have $21$ secant and $21$ tangent lines to $A$, which forces $u_0(S) \le 73-50=23$. Thus, we have shown that that there are exactly three tangent lines to $A$ through $P_i$, or equivalently, two $2$-secant lines to $A$ through $P_i$, for $i \in \{1,2\}$. If $\ol{P_1P_2}$ is a $2$-secant line to $A$, then there are one $4$-secant line and two $3$-secant lines to $S$, which by Proposition~\ref{prop-upperbound} implies $u_0(S) \le 23$. Similarly, $\ol{P_1P_2}$ cannot be a tangent line to $A$. Hence $\ol{P_1P_2}$ must be an external line to $A$. In this case, $S$ has degree $3$ and there are exactly four $3$-secant lines to $S$, which gives the first case in (2b).

{\bf Second, we consider the case $k=6$}. By Lemma~\ref{lem-upperboundstructure}(2), $u_0(S)=25$ only if $l=3$ and $u_0(S)=24$ only if $l \in \{2,3\}$. Now we consider the following cases.

Case A): $l=3$. We further split into two subcases.

Case A1): the three points of $B$ are not collinear. Then there are three lines determined by $B$. By the definition of a pro-arc set, these three lines cannot be $2$-secant lines to $A$ and we assume there are $z$ lines among them being tangent lines to $A$, where $0 \le z \le 3$. Therefore, we have
\begin{align*}
  \mbox{number of $2$-secant lines to $A$:\quad} & 15 \\
  \mbox{number of tangent lines to $A$:\quad} & 24 \\
  \mbox{number of tangent lines to $B$, which do not intersect $A$:\quad} & 2\cdot3+2z=6+2z \\
  \mbox{number of $2$-secant lines to $B$, which do not intersect $A$:\quad} & 3-z
\end{align*}
In total, the number of lines intersecting $S$ is $48+z$ and $u_0(S)=25-z$. When $z=0$ or $z=1$, we have (1b) or the second case in (2b).

Case A2): the three points of $B$ are collinear. Then there is a unique line determined by $B$, which is denoted by $\ell$. By the definition of a pro-arc set, $\ell$ cannot be a $2$-secant line to $A$. If $\ell$ is a tangent or external line to $A$, using similar argument as in Case A1), we have $u_0(S)=22$ or $u_0(S)=24$, where the latter leads to the third case in (2b).

Case B): $l=2$. In this case, let $S \sm A=\{P_1,P_2,P_3\}$ and $B=\{P_1,P_2\}$. For $i \in \{1,2\}$, denote the unique $2$-secant line to $A$ through $P_i$ by $\ell_i$. We further split into two subcases.

Case B1): the three points $P_1$, $P_2$, $P_3$ are not collinear. Let $x$ be the number of $2$-secant lines to $A$ through $P_3$, which belong to the set $\{\ell_1, \ell_2\}$. Let $y$ be the number of $2$-secant lines to $A$ through $P_3$, which are not in $\{\ell_1,\ell_2\}$ and $z$ the number of tangent lines to $A$ in the set $\{\ol{P_1P_2},\ol{P_1P_3},\ol{P_2P_3}\}$. Therefore, we have
\begin{align*}
  \mbox{number of $2$-secant lines to $A$:\quad} & 15 \\
  \mbox{number of tangent lines to $A$:\quad} & 24 \\
  \mbox{number of tangent lines to $B$, which do not intersect $A$:\quad} & 2\cdot2+7-[(x+y)+6-2(x+y)]+2(x+y+z) \\
                                                                          =&5+3x+3y+2z \\
  \mbox{number of $2$-secant lines to $B$, which do not intersect $A$:\quad} & 3-(x+y+z)
\end{align*}
In total, the number of lines intersecting $S$ is $47+2x+2y+z$ and $u_0(S)=26-2x-2y-z$. By the definition of a pro-arc set, if $x=0$, then $y \ge 2$. To ensure $u_0(S) \ge 24$, we must have $x=1$ and $y=z=0$, which leads to the fourth case of (2b).

Case B2): the three points $P_1$, $P_2$, $P_3$ are collinear. Let $y$ be the number of $2$-secant lines to $A$ through $P_3$. Note that $P_3$ is not in the pro-arc set $B$ and $P_3$ is not on the $2$-secant lines $\ell_1$ and $\ell_2$. Thus, by the definition of a pro-arc set, $y \ge 2$. Moreover, $\ol{P_1P_2P_3}$ cannot be a $2$-secant line to $A$. Using similar argument as in Case B1), we see that $u_0(S) \le 23$, whenever $\ol{P_1P_2P_3}$ is a tangent or external line to $A$.

{\bf Finally, we consider the case $k=5$}, where $S \sm A=\{P_1,P_2,P_3,P_4\}$. By Lemma~\ref{lem-upperboundmaxarc}(1), we have $u_0(S) \le 24$, where by Lemma~\ref{lem-upperboundstructure}(2) the equality holds only if $l=4$, namely $\{P_1,P_2,P_3,P_4\}$ is a pro-arc $4$-set with respect to $S$ and $A$. Suppose through each $Q \in A$, there exists at most one $2$-secant line to $A$, which contains one point of $S \sm A$. However, by Lemma~\ref{lem-numtangent}(4), we must have $l \le 2$. Hence, there must be a point $Q \in A$, so that two $2$-secant lines to $A$ through $Q$, intersecting $S \sm A$ in two different points, say, $P_1$ and $P_2$. Hence, $(A \sm \{Q\}) \cup \{P_1,P_2\}$ is a $6$-arc in $S$. Therefore, we go back to the $k \ge 6$ cases discussed before.
\end{proof}

Now we proceed to prove Theorem~\ref{thm-evenupper}.

\begin{proof}[Proof of Theorem~\ref{thm-evenupper}]
Suppose $S$ contains an $S$-maximal $k$-arc $A$ with $2 \le k \le q$. Let $B$ be a pro-arc $l$-set with respect to $S$ and $A$. Applying Lemma~\ref{lem-upperboundmaxarc}(1), we have
\begin{equation}\label{eqn-quad}
u_0(S) \le \begin{cases}
  \frac{q(q-1)}{2}-\frac{q}{2}+1 & \mbox{if $k \in \{\frac{q}{2}+2,q\}$,} \\
  \frac{q(q-1)}{2}-\frac{q}{2} & \mbox{if $k=\frac{q}{2}+1$,} \\
  \frac{q(q-1)}{2}-\frac{q}{2}-1 & \mbox{if $k \in [2,q] \sm \{\frac{q}{2}+1,\frac{q}{2}+2,q\}$,}
\end{cases}
\end{equation}
which implies $u_0(S) \le \frac{q(q-1)}{2}-\frac{q}{2}+1$.

According to \eqref{eqn-quad}, we only need to consider the three cases $k \in \{\frac{q}{2}+1,\frac{q}{2}+2,q\}$. First, if $k=q$, Example~\ref{exam-evenlarge}(1)(2) leads to (1) and (2). Second, if $k=\frac{q}{2}+2$, to ensure that $u_0(S) \in \{\frac{q(q-1)}{2}-\frac{q}{2},\frac{q(q-1)}{2}-\frac{q}{2}+1\}$, by Lemma~\ref{lem-upperboundstructure}(2) we must have $l \ge \frac{q}{2}-2$. If there exists a point $Q \in A$ and two points $P_1,P_2 \in B$, such that $\ol{P_1Q}$ and $\ol{P_2Q}$ are both $2$-secant lines to $A$, then $(A \sm \{Q\}) \cup \{P_1,P_2\}$ is a $(\frac{q}{2}+3)$-arc in $S$. Thus, there exists an $S$-maximal $k^{\pr}$-arc in $S$ with $k^{\pr} \ge \frac{q}{2}+3$. If $k^{\pr}=q$, then we go back to the first case. If $\frac{q}{2}+3 \le k^{\pr} < q$, \eqref{eqn-quad} implies $u_0(S) \le \frac{q(q-1)}{2}-\frac{q}{2}-1$. Consequently, for each $Q \in A$, there exists at most one $2$-secant line through $Q$, which contains a point of $B$. By Lemma~\ref{lem-numtangent}(4), we have $2l \le \frac{q}{2}+2$. For $l \ge \frac{q}{2}-2$, this means $q \le 12$. Third, if $k=\frac{q}{2}+1$, employing a similar approach, we can show that $S$ contains an $S$-maximal $k^{\pr}$-arc with $k^{\pr} \ge \frac{q}{2}+2$, unless $q \le 2$. Hence, either we reduce to the two cases discussed before, or we have $q \le 2$. To sum up, when $k<q$, we have $u_0(S) \in \{\frac{q(q-1)}{2}-\frac{q}{2},\frac{q(q-1)}{2}-\frac{q}{2}+1\}$ only if $q \in \{2,4,8\}$. Applying Lemma~\ref{lem-evensmall} gives the proof of (3).
\end{proof}

Next, we give a proof of Theorem~\ref{thm-oddupper}.

\begin{proof}[Proof of Theorem~\ref{thm-oddupper}]
Suppose $S$ contains an $S$-maximal $k$-arc with $2 \le k \le q$. If $\frac{2q+6}{3} \le k \le q$, then by Lemma~\ref{lem-upperboundmaxarc}(2) we have $u_0(S) \le \frac{q(q-1)}{2}-\frac{3}{2}(q+1-k)(k-\frac{2q+4}{3})$, which implies
\begin{equation}\label{eqn-upperbound1}
u_0(S) \le \begin{cases}
  \frac{q(q-1)}{2}-\frac{q-3}{3} & \mbox{if $q \equiv 0\bmod3$ and $q \ge 9$, then the equality holds only if $k=\frac{2q+6}{3}$,} \\
  \frac{q(q-1)}{2}-\frac{q-3}{2} & \mbox{if $q \equiv 1\bmod3$ and $q \ge 7$, then the equality holds only if $k \in \{\frac{2q+7}{3},q\}$,} \\
  \frac{q(q-1)}{2}-\frac{q-3}{2} & \mbox{if $q \equiv 2\bmod3$ and $q \ge 11$, then the equality holds only if $k=q$.}
\end{cases}
\end{equation}
If $2 \le k<\frac{2q+6}{3} \le q$, then by Lemma~\ref{lem-upperboundmaxarc}(2) we have $u_0(S) \le \frac{q(q-1)}{2}-(q+1-k)$, which implies
\begin{equation}\label{eqn-upperbound2}
u_0(S) \le \begin{cases}
  \frac{q(q-1)}{2}-\frac{q}{3} & \mbox{if $q \equiv 0\bmod3$, then the equality holds only if $k=\frac{2q+3}{3}$,} \\
  \frac{q(q-1)}{2}-\frac{q-1}{3} & \mbox{if $q \equiv 1\bmod3$, then the equality holds only if $k=\frac{2q+4}{3}$,} \\
  \frac{q(q-1)}{2}-\frac{q-2}{3} & \mbox{if $q \equiv 2\bmod3$, then the equality holds only if $k=\frac{2q+5}{3}$.}
\end{cases}
\end{equation}
Combining \eqref{eqn-upperbound1} and \eqref{eqn-upperbound2}, we derive the upper bound \eqref{eqn-oddupperbound}. The case $q \in \{3,5,7\}$ follows from Lemma~\ref{lem-oddsmall} below.
\end{proof}

\begin{lemma}\label{lem-oddsmall}
\quad
\begin{itemize}
    \item[(1)] If $q=3$, then $u_0(S) \le 2$, and the equality holds if and only if $S$ is of the form in Example~\ref{exam-oddlarge}(2).
    \item[(2)] If $q=5$, then $u_0(S) \le 9$, and the equality holds if and only if $S$ is of the form in Example~\ref{exam-oddlarge}(1).
    \item[(3)] If $q=7$, then $u_0(S) \le 19$, and the equality holds if and only if one of the following holds:
           \begin{itemize}
             \item[$\bullet$] $S$ is of the form in Example~\ref{exam-oddlarge}(1),
             \item[$\bullet$] $S$ contains an $S$-maximal $6$-arc, and $S \sm A$ is a pro-arc $2$-set with respect to $S$ and $A$, where the unique line determined by $S \sm A$ is an external line to $A$.
           \end{itemize}
\end{itemize}
\end{lemma}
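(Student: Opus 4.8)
The plan is to run a case analysis on $q\in\{3,5,7\}$ and, in each case, to choose an $S$-maximal $k$-arc $A$ of \emph{largest} possible size, which exists with $2\le k\le q$ because $S$ is not a $(q+1)$-arc, and then to feed this $A$ into Lemma~\ref{lem-upperboundmaxarc}(2). For $q$ odd this bound already forces $k$ into a tiny range whenever $u_0(S)$ is near $\frac{q(q-1)}{2}$. Evaluating the two branches at $q=3$ gives $u_0(S)\le k-1$, so $u_0(S)\le 2$ with equality forcing $k=3$; at $q=5$ it gives $u_0(S)\le k+4$, so $u_0(S)\le 9$ with equality forcing $k=5$; and at $q=7$ it gives $u_0(S)\le 18$ for $k\le 5$ but $u_0(S)\le 19$ for $k\in\{6,7\}$. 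Thus in every case the stated numerical upper bound is immediate, and the real work is to identify the extremal configurations.

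First I would dispose of the cases where the extremal $k$ equals $q$, i.e.\ where $A$ is a $q$-arc. Then $S=A\cup\{Q\}$ with $A$ a $q$-arc, so $S$ is exactly one of the configurations analysed in Example~\ref{exam-oddlarge}; reading off the two possibilities, case (1) yields $u_0(S)=\frac{q(q-1)}{2}-\frac{q-3}{2}$ and case (2) yields $u_0(S)=\frac{q(q-1)}{2}-\frac{q-1}{2}$. For $q=3$ only case (2) occurs (case (1) is vacuous), giving $u_0(S)=2$ and the description in part (1); for $q=5$ the two values are $9$ and $8$, so equality $u_0(S)=9$ holds exactly for Example~\ref{exam-oddlarge}(1), which is part (2); and for $q=7$ they are $19$ and $18$, supplying the first extremal family in part (3).

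The remaining and genuinely delicate case is $q=7$ with $k=6$, i.e.\ $S=A\cup\{P_1,P_2\}$ where $A$ is an $S$-maximal $6$-arc and $S$ contains no $7$-arc. Here I would argue directly through the incidence recursion from the proof of Lemma~\ref{lem-upperboundstructure}, writing $u_0(S)=\frac{q(q-1)}{2}-m_{P_1}-m_{P_2}$, where $m_{P_1}$ counts the $2$-secants to $A$ through $P_1$ and $m_{P_2}$ counts the excess incidences of lines through $P_2$ with the $7$-point set $A\cup\{P_1\}$. Since $A$ is $S$-maximal, each $P_i$ lies on at least one $2$-secant to $A$, so $m_{P_1},m_{P_2}\ge 1$ and hence $u_0(S)=19$ forces $m_{P_1}=m_{P_2}=1$. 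The key computation is that $m_{P_1}=1$ is equivalent to $P_1$ being a pro-arc point, and that, writing $t$ for the number of $2$-secants to $A$ through $P_2$, one has $m_{P_2}=t$ when $\ol{P_1P_2}$ is external to $A$ and $m_{P_2}=t+1$ otherwise, the extra unit arising because $\ol{P_1P_2}$ then already joins $P_1$ to a point of $A$. Consequently $m_{P_2}=1$ holds precisely when $P_2$ is a pro-arc point and $\ol{P_1P_2}$ is external to $A$; these two conditions also force $P_1,P_2$ onto distinct $2$-secants, so $\{P_1,P_2\}$ is a genuine pro-arc $2$-set with respect to $S$ and $A$. This yields exactly the second extremal family in part (3).

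I expect the last paragraph to be the main obstacle: one must carry out the line-by-line incidence bookkeeping through $P_2$ carefully and verify that no $6$-arc-plus-two-points configuration other than the pro-arc-with-external-line one reaches $u_0=19$, while also checking that this $k=6$ family is disjoint from the $k=7$ family (it contains no $7$-arc by hypothesis) so that the disjunction in part (3) is both exhaustive and non-redundant.
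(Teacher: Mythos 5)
Your proposal is correct and follows essentially the same route as the paper: the numerical bounds and the restriction of $k$ come from Lemma~\ref{lem-upperboundmaxarc}(2) (equivalently, the bounds \eqref{eqn-upperbound1} and \eqref{eqn-upperbound2} specialised to $q\in\{3,5,7\}$), the $k=q$ configurations are read off from Example~\ref{exam-oddlarge}, and the $(q,k)=(7,6)$ case is settled by incidence counting. Where the paper merely invokes ``a similar approach as in the proof of Lemma~\ref{lem-evensmall}'' for that last case, you carry it out explicitly via the $m_j$-recursion of Lemma~\ref{lem-upperboundstructure}, and your bookkeeping ($m_{P_1}=$ number of $2$-secants to $A$ through $P_1$; $m_{P_2}=t$ or $t+1$ according as $\ol{P_1P_2}$ is or is not external to $A$; externality forcing the two pro-arc points onto distinct $2$-secants) checks out.
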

\begin{proof}
When $q \in \{3,5\}$, the $(q+1)$-set achieving the bound \eqref{eqn-upperbound2} contains a $q$-arc, which has been determined in Example~\ref{exam-oddlarge}. When $q=7$, the two bounds \eqref{eqn-upperbound1} and \eqref{eqn-upperbound2} coincide, in which the $8$-set $S$ contains either an $S$-maximal $6$-arc or an $S$-maximal $7$-arc. The latter case follows from Example~\ref{exam-oddlarge}(1), and by using a similar approach as in the proof Lemma~\ref{lem-evensmall}, we complete the former case.
\end{proof}

Now we proceed to prove Proposition~\ref{prop-improvedupper}.

\begin{proof}[Proof of Proposition~\ref{prop-improvedupper}]
For a $(q+2)$-set $T$ with an internal nucleus, by the proof of \cite[Proposition 8]{BM}, we have
$$
\begin{cases}
u_0(T) < \frac{q(q-1)}{2}-\frac{q-1}{2} & \mbox{if $T$ contains exactly one internal nucleus,} \\
u_0(T) \le \frac{q(q-1)}{2}-\frac{q-1}{2} & \mbox{if $T$ contains two internal nuclei.}
\end{cases}
$$
If $S$ contains no internal nucleus and has a nucleus $N$, then $S \cup \{N\}$ is a $(q+2)$-set with exactly one internal nucleus $N$. Therefore, $u_0(S)=u_0(S \cup \{N\})< \frac{q(q-1)}{2}-\frac{q-1}{2}$. If $S$ contains an internal nucleus $O$ and a nucleus $N$, then $T=S \cup \{N\}$ is a $(q+2)$-set with exactly two internal nuclei $O$ and $N$. Therefore, $u_0(S)=u_0(T) \le \frac{q(q-1)}{2}-\frac{q-1}{2}$. Moreover, by \cite[Proposition 8]{BM}, $u_0(T)=\frac{q(q-1)}{2}-\frac{q-1}{2}$ if and only if $T$ is a $(q+1)$-arc plus a point $Q$, so that there are exactly two tangent lines to the $(q+1)$-arc through $Q$. Denote these two tangent lines as $\ol{P_1Q}$ and $\ol{P_2Q}$, where $P_1$ and $P_2$ are two points on the $(q+1)$-arc. Clearly, $P_1$ and $P_2$ are two internal nuclei of $T$. According to \cite[Theorem 1]{BK}, when $q$ is odd, a $(q+2)$-set contains at most $2$ internal nuclei. This forces $\{P_1,P_2\}=\{O,N\}$. Consequently, $S$ must be of the form in Example~\ref{exam-oddlarge}(2).
\end{proof}

When $q$ is odd, a $(q+2)$-set $T$ contains $0$ or $1$ or $2$ internal nuclei \cite[Theorem 1]{BK}. To our best knowledge, when $T$ contains no internal nucleus, the upper bound on $u_0(T)$ is still open. This upper bound is closely related to the upper bound on $u_0(S)$, where $S$ is a $(q+1)$-set with no internal nucleus and no nucleus. Next, we give the proof of Theorem~\ref{thm-lower}.

\begin{proof}[Proof of Theorem~\ref{thm-lower}]
Since the $(q+1)$-set $S$ has degree $n$, there exists a line $\ell$ intersecting $S$ in exactly $n$ points. Let $L$ be a subset of $S$ consisting of these $n$ points. There are $1+qn$ lines intersecting $L$, and at most $(q+1-n)^2$ lines intersecting the $q+1-n$ points in $S \sm L$ and not intersecting $L$. Consequently, $\sum_{i=1}^{q+1} u_i(S) \le 1+qn+(q+1-n)^2=q^2+2q+2-n(q+2-n)$, which implies $u_0(S) \ge n(q+2-n)-(q+1)$, where $3 \le n \le q+1$. Employing this inequality, we have
\begin{equation*}
u_0(S) \ge \begin{cases}
  0 & \mbox{if $n=q+1$,} \\
  q-1 & \mbox{if $n=q$ and $q \ge 3$,} \\
  2q-4 & \mbox{if $n \in \{3,q-1\}$ and $q \ge 4$,} \\
  3q-9 & \mbox{if $4 \le n \le q-2$ and $q \ge 7$.}
\end{cases}
\end{equation*}
Clearly, $u_0(S)=0$ if and only if $n=q+1$ and $u_0(S)=q-1$ if and only if $n=q$, which give the smallest and second smallest values of $u_0(S)$. This observation, together with Lemma~\ref{prop-upperbound}, already determine all achievable value of $u_0(S)$ when $q \in \{2,3\}$ (see Remark~\ref{rem-nonhittingspec}). Thus, we only need to consider the case $q \ge 4$ and $3 \le n \le q-1$ below. We claim that $2q-4 \le u_0(S) \le 2q-3$ only if $n=q-1$. Indeed, if $q \ge 7$, when $4 \le n \le q-2$, we have $u_0(S) \ge 3q-9>2q-3$. Hence, $2q-4 \le u_0(S) \le 2q-3$ only if $n \in \{3,q-1\}$. If $n=3$, then by Proposition~\ref{prop-upperbound}, $u_0(S)=\frac{q(q-1)}{2}-u_3(S)$. Since through each point of $S$ there are at most $\frac{q}{2}$ $3$-secant lines to $S$, $u_3(S) \le \frac{q}{2}(q+1)/3=\frac{q(q+1)}{6}$. Thus, we have $u_0(S)=\frac{q(q-1)}{2}-u_3(S) \ge \frac{q(q-1)}{2}-\frac{q(q+1)}{6}>2q-3$. For the three remaining cases $(q,n) \in \{ (4,3), (5,3), (5,4) \}$, we have $u_0(S) \in \{ 4,5 \}$, $u_0(S) \in \{8,9\}$ and $u_0(S) \in \{ 6,7 \}$, respectively. Thus, the claim holds true. The cases achieving equalities in (3) are all easy to see. Finally, the intersection distributions follow immediately from the characterization of $S$.
\end{proof}

\section{The multiplicity distribution of some power mappings}\label{sec-appendixB}

Let $q=p^s$ be a power of prime $p$. In this appendix, we determine the multiplicity distribution $(M_i(x^{d},c))_{i=0}^q$, at each $c \in \Fq$, where $d \in \{p^i,p^i+1,\frac{q-1}{2},\frac{q+1}{2},q-2,q-1\}$. Note that for simplicity, whenever referring to the multiplicity distribution, we only list the $M_i(x^{d},c)$'s with nonzero values and omit all the $M_i(x^{d},c)$'s equal to zero. Indeed, the information about nonzero $M_i(x^{d},c)$'s, already forces the rest to be zero. Recall that $C_0^{(N,q)}$ is the set consisting of nonzero $N$-th powers in $\Fq$. The following three propositions are easy to see.

\begin{proposition}
Let $q=p^s$ be a power of prime $p$. Let $0 \le i \le s-1$ be an integer with $h=\gcd(i,s)$. We have
$$
\begin{cases}
M_0(x^{p^i},c)=p^s-p^{s-h}, \quad M_{p^h}(x^{p^i},c)=p^{s-h}, & \mbox{if $c \in C_0^{(p^h-1,q)}$,} \\
M_1(x^{p^i},c)=p^s, & \mbox{if $c \notin C_0^{(p^h-1,q)}$.}
\end{cases}
$$
\end{proposition}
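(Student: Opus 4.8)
The plan is to exploit the fact that $p^i$ is a power of the characteristic, so that $g(x) := x^{p^i} - cx$ is an additive, hence $\Fp$-linear, self-map of $\Fq$ regarded as an $s$-dimensional vector space over $\Fp$. Indeed $(x+y)^{p^i} = x^{p^i} + y^{p^i}$ and multiplication by scalars in $\Fp$ is preserved, so $g$ is a linearized polynomial, and the multiset $\{x^{p^i} - cx \mid x \in \Fq\}$ is exactly the value multiset of this linear map.

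For any $\Fp$-linear map $g \colon \Fq \to \Fq$, every element of the image $g(\Fq)$ has precisely $|\Ker g|$ preimages, while every element outside the image has none. Hence the multiset $\{g(x) \mid x \in \Fq\}$ realizes only the two multiplicities $0$ and $|\Ker g|$, and
$$
M_{|\Ker g|}(x^{p^i}, c) = |g(\Fq)| = \frac{q}{|\Ker g|}, \qquad M_0(x^{p^i}, c) = q - \frac{q}{|\Ker g|},
$$
with all other $M_j(x^{p^i}, c)$ equal to $0$. Thus the whole problem reduces to determining $|\Ker g|$, the number of solutions in $\Fq$ of $x^{p^i} = cx$.

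The next step is to count those solutions. The element $x=0$ always solves the equation, and for $x \ne 0$ it is equivalent to $x^{p^i - 1} = c$. Since $\Fq^*$ is cyclic of order $q-1$, the number of solutions of $x^{p^i - 1} = c$ in $\Fq^*$ is either $0$ or $\gcd(p^i - 1, q-1)$, the latter occurring exactly when $c$ is a $(p^i-1)$-th power. The crucial arithmetic input is the standard identity $\gcd(p^i - 1, p^s - 1) = p^{\gcd(i,s)} - 1 = p^h - 1$, which simultaneously yields that there are $p^h - 1$ nonzero solutions when they exist, and (because in a cyclic group the subgroup of $k$-th powers depends only on $\gcd(k, q-1)$) that $c$ is a $(p^i-1)$-th power if and only if it is a $(p^h-1)$-th power, i.e. $c \in C_0^{(p^h-1,q)}$. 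I would also record that $p^h - 1 \mid p^s - 1$ since $h \mid s$, so that $C_0^{(p^h-1,q)}$ is a genuine subgroup of $\Fq^*$ of order $(q-1)/(p^h-1)$.

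Combining these facts finishes the argument: if $c \in C_0^{(p^h-1,q)}$ then $|\Ker g| = (p^h - 1) + 1 = p^h$, giving $M_{p^h}(x^{p^i},c) = p^{s-h}$ and $M_0(x^{p^i},c) = p^s - p^{s-h}$; and if $c \notin C_0^{(p^h-1,q)}$ (in particular for $c=0$, where $g$ is the Frobenius bijection), then $x=0$ is the only solution, so $|\Ker g| = 1$ and $M_1(x^{p^i},c) = p^s$. This is precisely the claimed multiplicity distribution. The only genuine obstacle is the solution count for $x^{p^i - 1} = c$, which rests entirely on the gcd identity above; once that identity is in hand, everything else is immediate.
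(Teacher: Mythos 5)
Your proof is correct. The paper states this proposition without proof (it is one of the three results declared ``easy to see''), and your argument --- observing that $x^{p^i}-cx$ is $\Fp$-linear so the multiset of values has only multiplicities $0$ and $|\mathrm{Ker}|$, then computing the kernel size via $\gcd(p^i-1,p^s-1)=p^h-1$ --- is exactly the routine verification the authors intend, including the correct handling of $c=0$ and of $c\notin C_0^{(p^h-1,q)}$.
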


\begin{proposition}
Let $q$ be a prime power. We have
$$
\begin{cases}
M_0(x^{q-1},0)=q-2, \quad M_1(x^{q-1},0)=1, \quad M_{q-1}(x^{q-1},0)=1, & \\
M_0(x^{q-1},c)=1, \quad M_1(x^{q-1},c)=q-2, \quad M_{2}(x^{q-1},c)=1, & \mbox{if $c \ne 0$.}
\end{cases}
$$
\end{proposition}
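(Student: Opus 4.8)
The plan is to exploit the fact that the monomial $x^{q-1}$ takes only two values over $\Fq$: one has $0^{q-1}=0$, while $x^{q-1}=1$ for every $x \in \Fq^*$. This collapses the computation of both multiplicity distributions to an elementary count on the multiset $\{x^{q-1}-cx \mid x \in \Fq\}$, which I would carry out by isolating the contribution of $x=0$ from that of the nonzero arguments.

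First I would dispose of the case $c=0$. Here the multiset is $\{x^{q-1} \mid x \in \Fq\}$, in which the value $0$ occurs exactly once (only at $x=0$) and the value $1$ occurs exactly $q-1$ times (at the $q-1$ nonzero arguments), while no other element of $\Fq$ occurs at all. Reading off multiplicities gives $M_1(x^{q-1},0)=1$ for the element $0$, $M_{q-1}(x^{q-1},0)=1$ for the element $1$, and $M_0(x^{q-1},0)=q-2$ for the remaining elements of $\Fq \sm \{0,1\}$, as claimed.

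For $c \ne 0$, write $g(x)=x^{q-1}-cx$. The crucial observation is that on $\Fq^*$ one has $g(x)=1-cx$, and the affine map $x \mapsto 1-cx$ is injective on $\Fq$ with $x=0$ as the unique preimage of $1$; hence its restriction to $\Fq^*$ is a bijection onto $\Fq \sm \{1\}$. Thus, as $x$ runs through $\Fq^*$, every element of $\Fq \sm \{1\}$ is hit exactly once. Incorporating the single remaining value $g(0)=0$ then raises the multiplicity of $0$ to $2$, leaves $1$ unattained, and keeps each element of $\Fq \sm \{0,1\}$ at multiplicity $1$. This yields $M_2(x^{q-1},c)=1$ (the element $0$), $M_0(x^{q-1},c)=1$ (the element $1$), and $M_1(x^{q-1},c)=q-2$ (the elements of $\Fq \sm \{0,1\}$), which is the stated formula.

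Since everything reduces to the two-valued behaviour of $x^{q-1}$ together with the bijectivity of a single affine map, there is no substantive obstacle. The only points demanding care are bookkeeping the double hit on the value $0$ in the case $c \ne 0$ (it is attained both at $x=0$ and at $x=c^{-1}$), and noting that when $q=2$ the two listed indices $1$ and $q-1$ coincide in the $c=0$ case, so the two multiplicity-one elements merge and the nonzero entries must be read additively.
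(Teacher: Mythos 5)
Your proof is correct, and it is exactly the direct verification the paper has in mind: the paper offers no argument at all for this proposition, simply declaring it (together with the two adjacent ones) ``easy to see.'' Your bookkeeping of the double hit at $0$ for $c\ne 0$ and the remark about the coincidence of indices when $q=2$ are both accurate.
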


\begin{proposition}
Let $q$ be an even prime power. We have
$$
\begin{cases}
M_1(x^{q-2},0)=q, & \\
M_0(x^{q-2},c)=\frac{q}{2}, \quad M_2(x^{q-2},c)=\frac{q}{2}, & \mbox{if $c \ne 0$.}
\end{cases}
$$
\end{proposition}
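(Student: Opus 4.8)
The plan is to work directly with the function $x^{q-2}$, using that $x^{q-2}=x^{-1}$ for every $x\in\Fq^*$ while $0^{q-2}=0$, so that $x^{q-2}$ is field inversion extended by fixing $0$; throughout, recall that in characteristic $2$ all signs are irrelevant, so $f(x)-cx=x^{q-2}+cx$. First I would dispose of the case $c=0$: since inversion permutes $\Fq^*$ and sends $0$ to $0$, the multiset $\{x^{q-2}\mid x\in\Fq\}$ is exactly $\Fq$ with every element occurring once. Hence $x^{q-2}$ is a permutation polynomial and $M_1(x^{q-2},0)=q$, which is the first asserted value.

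For $c\ne0$ set $g(x)=x^{q-2}+cx$. The key reduction is that counting, for a fixed $b\in\Fq$, the number of $x$ with $g(x)=b$ reduces to counting roots of a quadratic. Indeed, for $x\ne0$ the equation $x^{-1}+cx=b$ becomes, after multiplying by $x$, the quadratic $cx^2+bx+1=0$. Its constant term is nonzero, so $x=0$ is never a root; thus every root lies automatically in $\Fq^*$. Consequently, for $b\ne0$ the multiplicity of $b$ in the multiset equals the number of roots of $cx^2+bx+1=0$, while for $b=0$ one must additionally account for the contribution $g(0)=0$.

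The next step is the standard solvability criterion for quadratics in characteristic $2$. For $b\ne0$ the quadratic is separable, and the substitution $x=(b/c)z$ transforms it into $z^2+z=c/b^2$, which has exactly two roots in $\Fq$ when $\Tr^{q}_{2}(c/b^2)=0$ and none when $\Tr^{q}_{2}(c/b^2)=1$. For $b=0$ the equation degenerates to $cx^2+1=0$, i.e. $x^2=c^{-1}$, which has the unique square root of $c^{-1}$ as its only solution by bijectivity of the Frobenius map; together with $x=0$ this shows the value $0$ is attained exactly twice. Therefore every value of $g$ is attained either $0$ or $2$ times, so no value is attained exactly once, giving $M_1(x^{q-2},c)=0$.

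Finally I would perform the count of values attained twice. As $b$ ranges over $\Fq^*$, the map $b\mapsto c/b^2$ is a bijection of $\Fq^*$ (Frobenius followed by inversion and scaling), so the number of $b\in\Fq^*$ with $\Tr^{q}_{2}(c/b^2)=0$ equals the number of $t\in\Fq^*$ with $\Tr^{q}_{2}(t)=0$. Since the absolute trace is a surjective $\F_2$-linear form, exactly $q/2$ elements of $\Fq$ have trace $0$, one of which is $t=0$; hence $q/2-1$ nonzero values $b$ are attained twice. Adding the value $b=0$, a total of $q/2$ values are attained twice and the remaining $q/2$ values are not attained, yielding $M_2(x^{q-2},c)=\frac{q}{2}$ and $M_0(x^{q-2},c)=\frac{q}{2}$. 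The only genuinely delicate point is the bookkeeping at $b=0$: because the quadratic is there inseparable, it contributes a single nonzero root, and one must verify that combining it with $x=0$ places the value $0$ into $M_2$ rather than $M_1$; everything else is a routine trace balance computation.
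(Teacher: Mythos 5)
Your proof is correct. Note that the paper offers no argument for this proposition at all---it is one of the three multiplicity distributions declared ``easy to see''---so there is nothing to diverge from; what you have written is the natural argument and it runs exactly parallel to the paper's own proof of the odd-characteristic analogue of this statement (the proposition for $x^{q-2}$ with $q$ odd), where the same reduction of $x^{-1}-cx=b$ to a quadratic is carried out and the solvability criterion is phrased via quadratic residues and cyclotomic numbers of order two instead of the Artin--Schreier condition $\Tr^q_2(c/b^2)=0$. Your bookkeeping is sound: the value $b=0$ is hit by $x=0$ and by the unique root of the inseparable equation $x^2=c^{-1}$, hence lands in $M_2$; each $b\ne 0$ is hit $0$ or $2$ times according to the trace of $c/b^2$, and since $b\mapsto c/b^2$ permutes $\Fq^*$ and the trace-zero hyperplane contains $q/2-1$ nonzero elements, you get $M_2=q/2$, $M_0=q/2$, $M_1=0$, consistent with $v_1(x^{q-2})=q$ in Table~3.1 coming entirely from $c=0$. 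The only caveat, which the paper shares, is the degenerate case $q=2$, where $x^{q-2}$ is the constant polynomial $1$ rather than the inversion map fixing $0$, so the statement should implicitly be read with $q>2$ (or with the convention $0^{q-2}=0$).
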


Next, we introduce the concept of \emph{cyclotomic numbers}. When $q$ is odd, we know that $C_0^{(2,q)}$ is the set of nonzero squares in $\Fq$ and denote the set of nonsquares in $\Fq$ by $C_1^{(2,q)}$. For $0 \le i,j \le 1$, define the cyclotomic numbers of order $2$ as
$$
(i,j)_q=|(1+C_i^{(2,q)}) \cap C_j^{(2,q)}|.
$$
The cyclotomic numbers of order $2$ are well known, see for instance \cite{Sto}.

\begin{lemma}\label{lem-cyclotomy}
Let $q$ be an odd prime power. If $q \equiv 1 \bmod 4$, then we have
$$
(0,0)_q=\frac{q-5}{4}, \quad (0,1)_q=(1,0)_q=(1,1)_q=\frac{q-1}{4}.
$$
If $q \equiv 3 \bmod 4$, then we have
$$
(0,1)_q=\frac{q+1}{4}, \quad (0,0)_q=(1,0)_q=(1,1)_q=\frac{q-3}{4}.
$$
\end{lemma}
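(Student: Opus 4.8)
The plan is to evaluate the cyclotomic numbers directly by means of the quadratic character. Let $\eta$ denote the quadratic multiplicative character of $\Fq$, extended by $\eta(0)=0$, so that for $x \in \Fq^*$ the indicator of membership in $C_i^{(2,q)}$ is $\frac{1+(-1)^i\eta(x)}{2}$. Since $(i,j)_q$ counts those $x \in C_i^{(2,q)}$ with $x+1 \in C_j^{(2,q)}$, and any such $x$ automatically satisfies $x \ne 0$ and $x \ne -1$, I would first write
$$
(i,j)_q=\sum_{x \in \Fq \sm \{0,-1\}} \frac{1+(-1)^i\eta(x)}{2}\cdot\frac{1+(-1)^j\eta(x+1)}{2}.
$$

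Expanding the product yields four sums. The constant term contributes $\frac{q-2}{4}$. For the two single-character sums, using $\sum_{x \in \Fq^*}\eta(x)=0$ I would obtain $\sum_{x \ne 0,-1}\eta(x)=-\eta(-1)$ and, after the shift $y=x+1$, $\sum_{x \ne 0,-1}\eta(x+1)=-1$. The crux is the mixed term $\sum_{x \ne 0,-1}\eta(x)\eta(x+1)=\sum_{x}\eta(x(x+1))$; writing $x(x+1)=x^2(1+x^{-1})$ for $x \ne 0$ and substituting $u=1+x^{-1}$ (which ranges over $\Fq \sm \{1\}$) reduces it to $\sum_{u \ne 1}\eta(u)=-1$. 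Equivalently one may invoke the standard evaluation $\sum_x \eta(ax^2+bx+c)=-\eta(a)$ valid whenever $b^2-4ac \ne 0$, here with $a=1$, $b=1$, $c=0$. Assembling the four pieces gives the single closed form
$$
(i,j)_q=\frac{1}{4}\bigl[(q-2)-(-1)^i\eta(-1)-(-1)^j-(-1)^{i+j}\bigr].
$$

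Finally I would use $\eta(-1)=(-1)^{(q-1)/2}$, so that $\eta(-1)=1$ when $q \equiv 1 \bmod 4$ and $\eta(-1)=-1$ when $q \equiv 3 \bmod 4$, and substitute the four pairs $(i,j) \in \{(0,0),(0,1),(1,0),(1,1)\}$ to read off the eight claimed values directly. The only subtle point of the argument — and where real care is needed — is the bookkeeping of the two excluded points $x=0$ and $x=-1$, at which the quadratic $x(x+1)$ degenerates: it is exactly the value of $\eta$ at $-1$ that survives in the closed form above and produces the dichotomy between the $q \equiv 1$ and $q \equiv 3 \bmod 4$ cases.
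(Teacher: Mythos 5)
Your proof is correct, and I verified the closed form
$$
(i,j)_q=\tfrac{1}{4}\bigl[(q-2)-(-1)^i\eta(-1)-(-1)^j-(-1)^{i+j}\bigr]
$$
reproduces all eight claimed values upon substituting $\eta(-1)=(-1)^{(q-1)/2}$: the identification of $(i,j)_q$ with the count of $x\in C_i^{(2,q)}$ satisfying $x+1\in C_j^{(2,q)}$ matches the paper's definition $|(1+C_i^{(2,q)})\cap C_j^{(2,q)}|$, the indicator expansion is valid precisely because both excluded points $x=0$ and $x=-1$ contribute nothing, and the evaluation of the mixed sum via $\sum_x\eta(x^2+x)=-\eta(1)=-1$ (or the substitution $u=1+x^{-1}$) is the standard nondegenerate-quadratic character sum. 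Note, however, that the paper does not prove this lemma at all; it simply cites Storer's book on cyclotomy, where these numbers are classically derived by counting solutions of $g^{2s+i}+1=g^{2t+j}$ for a primitive root $g$. Your character-sum argument is therefore a genuinely self-contained alternative: it trades the classical cyclotomic bookkeeping for a four-line expansion in which the entire $q\bmod 4$ dichotomy is isolated in the single surviving boundary term $\eta(-1)$, and it generalizes mechanically to counts involving any fixed translates. The only caution is the one you already flagged — the two degenerate points — and you have handled it correctly.
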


Now we proceed to determine the multiplicity distribution of $x^{q-2}$ with $q$ odd.

\begin{proposition}
Let $q$ be an odd prime power. If $q \equiv 1 \bmod 4$, then we have
$$
\begin{cases}
M_1(x^{q-2},0)=q, & \\
M_0(x^{q-2},c)=\frac{q-1}{2}, \quad M_1(x^{q-2},c)=2, \quad M_2(x^{q-2},c)=\frac{q-5}{2}, \quad M_3(x^{q-2},c)=1, & \mbox{if $c \in C_0^{(2,q)}$,} \\
M_0(x^{q-2},c)=\frac{q-1}{2}, \quad M_1(x^{q-2},c)=1, \quad M_2(x^{q-2},c)=\frac{q-1}{2}, & \mbox{if $c \in C_1^{(2,q)}$.}
\end{cases}
$$
If $q \equiv 3 \bmod 4$, then we have
$$
\begin{cases}
M_1(x^{q-2},0)=q, &\\
M_0(x^{q-2},c)=\frac{q+1}{2}, \quad M_2(x^{q-2},c)=\frac{q-3}{2}, \quad M_3(x^{q-2},c)=1, & \mbox{if $c \in C_0^{(2,q)}$,} \\
M_0(x^{q-2},c)=\frac{q-3}{2}, \quad M_1(x^{q-2},c)=3, \quad M_2(x^{q-2},c)=\frac{q-3}{2}, & \mbox{if $c \in C_1^{(2,q)}$.}
\end{cases}
$$
\end{proposition}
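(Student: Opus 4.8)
The plan is to exploit the fact that $x^{q-2}$ is the inversion map on $\Fq$: it fixes $0$ and sends each nonzero $x$ to $x^{-1}$. Consequently, for a fixed $c$ the value multiset $\{x^{q-2}-cx \mid x \in \Fq\}$ is governed by the fibers of the function $g_c(x)=x^{q-2}-cx$, and $M_i(x^{q-2},c)$ merely counts how many values of $\Fq$ have a fiber of size exactly $i$. When $c=0$ the map $x\mapsto x^{q-2}$ is a bijection, so every value occurs once and $M_1(x^{q-2},0)=q$; this disposes of the first line in each case.

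For $c\ne 0$ I would analyze $g_c$ by separating $x=0$ from $x\ne 0$. Since $g_c(0)=0$, the value $0$ always receives the preimage $x=0$. For $x\ne 0$, clearing the denominator in $x^{-1}-cx=v$ gives the quadratic $cx^2+vx-1=0$, whose roots are automatically nonzero (substituting $x=0$ yields $-1=0$). Hence for $v\ne 0$ the fiber size of $v$ equals the number of roots of this quadratic, which is controlled by the discriminant $\Delta_v=v^2+4c$: two roots if $\Delta_v$ is a nonzero square, one if $\Delta_v=0$, none if $\Delta_v$ is a nonsquare. For $v=0$ the quadratic becomes $x^2=c^{-1}$, contributing two further preimages when $c\in C_0^{(2,q)}$ and none when $c\in C_1^{(2,q)}$; adding the preimage $x=0$, the fiber over $0$ has size $3$ when $c$ is a square and size $1$ when $c$ is a nonsquare.

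The crux is therefore to count, as $v$ ranges over $\Fq^*$, how often $\Delta_v=v^2+4c$ is a nonzero square, is zero, and is a nonsquare; call these numbers $A$, $B$, $C$, so that $A+B+C=q-1$. The count $B$ is the number of $v$ with $v^2=-4c$, namely $2$ or $0$ according as $-c$ is or is not a square, and this is where the congruence $q\bmod 4$ enters through $\eta(-1)=(-1)^{(q-1)/2}$. To compute $A$ I would substitute $s=v^2$ (each nonzero square value of $s$ arising from exactly two $v$), and then $u=4c\,s^{-1}$, which ranges bijectively over $C_0^{(2,q)}$ or $C_1^{(2,q)}$ according to whether $c$ is a square; using $\eta(s+4c)=\eta(1+u)$, the condition $s+4c\in C_0^{(2,q)}$ becomes $u+1\in C_0^{(2,q)}$, so that $A=2(0,0)_q$ when $c$ is a square and $A=2(1,0)_q$ when $c$ is a nonsquare. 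Substituting the values of $(0,0)_q$ and $(1,0)_q$ from Lemma~\ref{lem-cyclotomy} then yields $A$, $B$, $C$ explicitly in each of the four cases.

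Finally I would assemble the distribution by bookkeeping: the values $v\ne 0$ with empty fiber give $M_0=C$; those with a single root contribute $B$ to $M_1$; those with two roots give $M_2=A$; and the fiber over $v=0$, of size $3$ or $1$, contributes to $M_3$ or to $M_1$. Collecting terms produces exactly the four displayed systems, and the identity $M_1+2M_2+3M_3=q$ (respectively $M_1+2M_2=q$) serves as a consistency check. The only genuine subtlety is the careful accounting: the factor $2$ coming from $s=v^2$, the special role of $v=0$ (which alone benefits from the preimage $x=0$), and the interplay between the parity of $(q-1)/2$, which governs $B$, and the cyclotomic numbers, which govern $A$. This interplay is precisely what separates the $q\equiv 1$ and $q\equiv 3 \bmod 4$ cases, and I expect it to be the main place where sign errors could creep in.
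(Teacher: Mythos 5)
Your proposal is correct and follows essentially the same route as the paper's proof: both reduce the fiber count for $v\ne 0$ to a quadratic whose solvability is governed by whether $v^2+4c$ is a nonzero square, zero, or a nonsquare (the paper phrases this via the substitution $x=\tfrac{b}{c}y$ and the completed square $(2y+1)^2=1+\tfrac{4c}{b^2}$, which is the same discriminant condition up to the square factor $b^2$), treat the fiber over $0$ separately in the identical way, and convert the counts into the cyclotomic numbers $2(i,j)_q$ evaluated by Lemma~\ref{lem-cyclotomy}. The bookkeeping you describe, including the role of $\eta(-1)$ in deciding when $v^2=-4c$ is solvable, matches the paper's case analysis exactly.
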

\begin{proof}
Since $x^{q-2}$ is a permutation polynomial, we have $M_1(x^{q-2},0)=q$. When $c \ne 0$, we need to know the number of solutions to $x^{q-2}-cx=b$, for each $b \in \Fq$. If $b=0$, it is easy to see that
\begin{equation}\label{eqn-b=0}
|\{x \in \Fq \mid x^{q-2}-cx=0\}|=\begin{cases}
  3 & \mbox{if $c \in C_0^{(2,q)}$,} \\
  1 & \mbox{if $c \in C_1^{(2,q)}$.} \\
\end{cases}
\end{equation}
If $b \ne 0$, then $x^{q-2}-cx=b$ can only have nonzero solutions in $\Fq$. Equivalently, we only need to consider nonzero solutions in $\Fq$ to the equation
$$
x^{-1}-cx=b,
$$
where $b,c \in \Fq^*$. Replacing $x$ with $\frac{b}{c}y$, the above is equivalent to $4(y+\frac{1}{2})^2=1+\frac{4c}{b^2}$, which has $0$, $1$ or $2$ solutions in $\Fq$ if and only if $1+\frac{4c}{b^2}$ belongs to $C_1^{(2,q)}$, $\{0\}$ or $C_0^{(2,q)}$. Note that the number of nonzero $b$'s such that $1+\frac{4c}{b^2} \in C_0^{(2,q)}$ or $1+\frac{4c}{b^2} \in C_1^{(2,q)}$ can be expressed using cyclotomic numbers of order two, and $1+\frac{4c}{b^2}=0$ holds if and only if $-c \in C_0^{(2,q)}$. Together with \eqref{eqn-b=0}, we have
$$
M_0(x^{q-2},c)=2(0,1)_q, \quad M_1(x^{q-2},c)=\begin{cases}
  2 & \mbox{if $q \equiv 1 \bmod 4$,} \\
  0 & \mbox{if $q \equiv 3 \bmod 4$,}
\end{cases}
\quad M_2(x^{q-2},c)=2(0,0)_q, \quad M_3(x^{q-2},c)=1,
$$
when $c \in C_0^{(2,q)}$ and
$$
M_0(x^{q-2},c)=2(1,1)_q, \quad M_1(x^{q-2},c)=\begin{cases}
  1 & \mbox{if $q \equiv 1 \bmod 4$,} \\
  3 & \mbox{if $q \equiv 3 \bmod 4$,}
\end{cases}
\quad M_2(x^{q-2},c)=2(1,0)_q,
$$
when $c \in C_1^{(2,q)}$. Applying Lemma~\ref{lem-cyclotomy} completes the proof.
\end{proof}

For $0 \le i,j \le 1$, define $C_{i,j}^q=\{ x \in \Fq^* \mid 1-x \in C_{i}^{(2,q)}, 1+x \in C_{j}^{(2,q)} \}$. For $q=p^s$, define
$$
\de_{p,s}=\begin{cases}
  1 & \mbox{if $2 \in C_0^{(2,q)}$,} \\
  0 & \mbox{if $2 \in C_1^{(2,q)}$.}
\end{cases}
$$
Note that $\de_{p,s}=1$ if $s$ is even or $p \equiv 1,7 \bmod 8$, and $\de_{p,s}=0$ if $s$ is odd and $p \equiv 3,5 \bmod 8$. The following lemma determines the size of $C_{i,j}^q$.

\begin{lemma}\label{lem-Cij}
Let $q=p^s$ be a power of prime $p$. If $q \equiv 1 \bmod 4$, then
$$
|C_{0,0}^{q}|=\frac{q-5}{4}-\de_{p,s}, \quad |C_{1,1}^{q}|=\frac{q-5}{4}+\de_{p,s}, \quad |C_{0,1}^q|=|C_{1,0}^q|=\frac{q-1}{4}.
$$
If $q \equiv 3 \bmod 4$, then
$$
|C_{0,0}^{q}|=\frac{q-3}{4}-\de_{p,s}, \quad |C_{1,1}^{q}|=\frac{q-3}{4}+\de_{p,s}, \quad |C_{0,1}^q|=|C_{1,0}^q|=\frac{q-3}{4}.
$$
\end{lemma}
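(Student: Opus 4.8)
The plan is to count each $C_{i,j}^q$ by a character-sum argument using the quadratic multiplicative character $\eta$ of $\Fq$, normalized so that $\eta(y)=1$ for $y \in C_0^{(2,q)}$, $\eta(y)=-1$ for $y \in C_1^{(2,q)}$, and $\eta(0)=0$. Since membership of $1-x$ and $1+x$ in $C_i^{(2,q)}$ forces both to be nonzero, the relevant range is $x \in \Fq \sm \{0,1,-1\}$, and for such $x$ the indicator that $1-x \in C_i^{(2,q)}$ equals $\tfrac{1+(-1)^i\eta(1-x)}{2}$, with the analogous expression for $1+x$. Multiplying the two indicators and summing over $x$, I would obtain
$$
|C_{i,j}^q|=\frac14\sum_{x}\Bigl[1+(-1)^i\eta(1-x)+(-1)^j\eta(1+x)+(-1)^{i+j}\eta(1-x)\eta(1+x)\Bigr],
$$
where $x$ ranges over $\Fq \sm \{0,1,-1\}$. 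Everything then reduces to evaluating four character sums.

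The first sum is simply $q-3$. For the two linear sums I would use $\sum_{y \in \Fq}\eta(y)=0$ together with the substitutions $y=1\mp x$, then subtract the contributions of the three excluded points; a short computation gives $\sum_{x}\eta(1-x)=\sum_{x}\eta(1+x)=-1-\eta(2)$. The crucial term is $\sum_{x}\eta(1-x)\eta(1+x)=\sum_{x}\eta(1-x^2)$, which I would evaluate by the standard formula for quadratic character sums \cite[Theorem 5.48]{LN}: since the discriminant of $1-x^2$ is nonzero, $\sum_{x \in \Fq}\eta(1-x^2)=-\eta(-1)$, and removing the excluded points (only $x=0$ contributes, giving $\eta(1)=1$) yields $\sum_{x}\eta(1-x^2)=-\eta(-1)-1$.

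Finally, I would substitute the standard values $\eta(-1)=1$ when $q\equiv1\bmod4$ and $\eta(-1)=-1$ when $q\equiv3\bmod4$, together with $\eta(2)=2\de_{p,s}-1$, which is exactly the content of the definition of $\de_{p,s}$. Plugging these into the displayed expression for each of the four pairs $(i,j)$ and simplifying produces precisely the eight claimed cardinalities, with the symmetry $|C_{0,1}^q|=|C_{1,0}^q|$ visible from the interchangeability of the two linear sums. The calculation is elementary once the character sums are in hand; the only genuine external input is the evaluation of $\sum_{x}\eta(1-x^2)$, and the main place demanding care is the bookkeeping of the excluded points $x\in\{0,1,-1\}$ in each of the three nontrivial sums, where an off-by-one slip would corrupt the final constants and, in particular, the precise appearance of $\de_{p,s}$ that distinguishes $|C_{0,0}^q|$ from $|C_{1,1}^q|$.
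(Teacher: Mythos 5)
Your proof is correct; I checked the four character sums and the final substitutions for all four pairs $(i,j)$ in both congruence classes, and they reproduce the stated cardinalities exactly (including the bookkeeping at the excluded points $x\in\{0,1,-1\}$, which you handle correctly). However, it takes a genuinely different route from the paper. The paper argues combinatorially: it first notes that $|C_{0,1}^q|+|C_{1,0}^q|=|\{x\in\Fq^*\mid 1-x^2\in C_1^{(2,q)}\}|=2(0,1)_q$ and that the involution $x\mapsto -x$ swaps $C_{0,1}^q$ and $C_{1,0}^q$, giving $|C_{0,1}^q|=|C_{1,0}^q|=(0,1)_q$; it then counts the $\frac{q-3}{2}$ elements $x\in\Fq^*$ with $1-x\in C_0^{(2,q)}$ and splits them as $|C_{0,0}^q|+|C_{0,1}^q|+\de_{p,s}$ (the $\de_{p,s}$ accounting for $x=-1$), and finally substitutes the classical cyclotomic numbers of order two from Lemma~\ref{lem-cyclotomy}. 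Your approach instead expands the indicator functions into sums involving the quadratic character $\eta$ and evaluates everything directly, the only external input being $\sum_{x\in\Fq}\eta(1-x^2)=-\eta(-1)$ from \cite[Theorem 5.48]{LN}. What each buys: the paper's argument is shorter because it outsources the hard content to the known values of $(i,j)_q$, whereas your computation is self-contained modulo one standard character-sum formula and in effect reproves the order-two cyclotomic numbers along the way; it also makes the role of $\eta(2)=2\de_{p,s}-1$ and $\eta(-1)$ completely transparent, at the cost of somewhat heavier bookkeeping.
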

\begin{proof}
We only prove the $q \equiv 1 \bmod 4$ case and the rest is similar. Note that
$$
|\{x \in \Fq^* \mid 1-x^2 \in C_1^{(2,q)}\}|=|C_{0,1}^q|+|C_{1,0}^q|=2(0,1)_q.
$$
Since $x \in C_{0,1}^q$ if and only if $-x \in C_{1,0}^q$, we have $|C_{0,1}^q|=|C_{1,0}^q|=(0,1)_q$. Note that there are $\frac{q-3}{2}$ elements in $\Fq^*$, such that $1-x \in C_0^{(2,q)}$. We have
$$
|C_{0,0}^q|+|C_{0,1}^q|+\de_{p,s}=\frac{q-3}{2},
$$
which leads to $|C_{0,0}^q|=\frac{q-3}{2}-(0,1)_q-\de_{p,s}$. Similarly, we have $|C_{1,1}^q|=\frac{q-3}{2}-(0,1)_q+\de_{p,s}$. Applying Lemma~\ref{lem-cyclotomy} completes the proof.
\end{proof}

Now we are ready to compute the multiplicity distribution of $x^{\frac{q-1}{2}}$.

\begin{proposition}
Let $q=p^s$ be a power of prime $p$. If $q \equiv 1 \bmod 4$, then
$$
\begin{cases}
M_0(x^{\frac{q-1}{2}},0)=q-3, \quad M_1(x^{\frac{q-1}{2}},0)=1, \quad M_{\frac{q-1}{2}}(x^{\frac{q-1}{2}},0)=2, & \\
M_0(x^{\frac{q-1}{2}},c)=\frac{q+3}{4}, \quad M_1(x^{\frac{q-1}{2}},c)=\frac{q-3}{2}, \quad M_{2}(x^{\frac{q-1}{2}},c)=\frac{q+3}{4}, & \mbox{if $c \ne 0$}.
\end{cases}
$$
If $q \equiv 3 \bmod 4$, then
$$
\begin{cases}
M_0(x^{\frac{q-1}{2}},0)=q-3, \quad M_1(x^{\frac{q-1}{2}},0)=1, \quad M_{\frac{q-1}{2}}(x^{\frac{q-1}{2}},0)=2, &\\
M_0(x^{\frac{q-1}{2}},c)=\frac{q+5}{4}-\de_{p,s}, \quad M_1(x^{\frac{q-1}{2}},c)=\frac{q-3}{2}+2\de_{p,s}, \quad M_2(x^{\frac{q-1}{2}},c)=\frac{q-3}{4}-\de_{p,s}, & \\
M_3(x^{\frac{q-1}{2}},c)=1, & \mbox{if $c \in C_0^{(2,q)}$}, \\
M_0(x^{\frac{q-1}{2}},c)=\frac{q-3}{4}+\de_{p,s}, \quad M_1(x^{\frac{q-1}{2}},c)=\frac{q+3}{2}-2\de_{p,s}, \quad M_2(x^{\frac{q-1}{2}},c)=\frac{q-3}{4}+\de_{p,s},& \mbox{if $c \in C_1^{(2,q)}$}.
\end{cases}
$$
\end{proposition}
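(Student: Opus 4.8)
The plan is to use the three-valued nature of $g(x):=x^{\frac{q-1}{2}}$. Let $\chi$ denote the quadratic character of $\Fq$, so that $g(0)=0$ while $g(x)=\chi(x)\in\{1,-1\}$ for $x\in\Fq^*$ according as $x\in C_0^{(2,q)}$ or $x\in C_1^{(2,q)}$. For $c=0$ the multiset $\{g(x)\mid x\in\Fq\}$ contains $0$ once and each of $\pm 1$ with multiplicity $\frac{q-1}{2}$, which reads off directly as $M_0=q-3$, $M_1=1$, $M_{\frac{q-1}{2}}=2$ in both congruence classes of $q$.

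For $c\ne 0$ I would split the multiset $\{g(x)-cx\mid x\in\Fq\}$ into the disjoint pieces $A_0=\{0\}$ (from $x=0$), $A_1=\{1-cx\mid x\in C_0^{(2,q)}\}$ and $A_2=\{-1-cx\mid x\in C_1^{(2,q)}\}$. Since $x\mapsto 1-cx$ and $x\mapsto -1-cx$ are injective, $A_1$ and $A_2$ are genuine $\frac{q-1}{2}$-sets, and the multiplicity of any $b$ equals $[b\in A_0]+[b\in A_1]+[b\in A_2]\le 3$. Writing $s_1=|A_0|+|A_1|+|A_2|=q$, $s_2$ for the sum of the three pairwise intersection sizes and $s_3=|A_0\cap A_1\cap A_2|$, inclusion--exclusion gives $M_3=s_3$, $M_2=s_2-3s_3$, $M_1=q-2s_2+3s_3$ and $M_0=s_2-s_3$, so the task reduces to computing $s_2$ and $s_3$.

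The two intersections with $A_0$ are immediate: $0\in A_1$ iff $c\in C_0^{(2,q)}$, and $0\in A_2$ iff $\chi(-1)\chi(c)=-1$; from these $s_3=[0\in A_1]\,[0\in A_2]$ is determined at once. The real work is the overlap $a:=|A_1\cap A_2|$, which counts $x\in C_0^{(2,q)}$ with $x-2/c\in C_1^{(2,q)}$. The substitution $u=cx-1$ is an affine bijection of $\Fq$ carrying these two conditions into $\chi(1+u)=\chi(c)$ and $\chi(1-u)=-\chi(-1)\chi(c)$, i.e. it prescribes the quadratic characters of $1\pm u$. Hence $a$ is the cardinality of exactly one of the sets $C_{i,j}^q$ of Lemma~\ref{lem-Cij}, the indices $(i,j)$ being dictated by the signs of $\chi(c)$ and $\chi(-1)$. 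Substituting the values from Lemmas~\ref{lem-cyclotomy} and \ref{lem-Cij} into the inclusion--exclusion formulas and simplifying then yields the four stated distributions, one for each pair (congruence class of $q$, square/nonsquare status of $c$).

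The one delicate point, and the main obstacle, is a boundary correction hidden in the identification $a=|C_{i,j}^q|$. The value $u=0$ (equivalently $b=0$, equivalently the pair $x=1/c$, $x-2/c=-1/c$) is excluded from every $C_{i,j}^q$, yet it genuinely belongs to $A_1\cap A_2$ precisely when $\chi(c)=1$ and $\chi(-1)=-1$, that is when $c\in C_0^{(2,q)}$ and $q\equiv 3\bmod 4$. In that single case one must add $1$ to $|C_{i,j}^q|$; in all other cases $u=0$ fails the prescribed character conditions and no correction arises. Overlooking this point shifts $M_0$ and $M_2$ by one, so the proof must track the midpoint $u=0$ carefully while otherwise carrying out only routine substitution of cyclotomic-number values.
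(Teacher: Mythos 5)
Your proposal is correct and follows essentially the same route as the paper: both split $x^{\frac{q-1}{2}}-cx=b$ into the square/nonsquare branches $x=\frac{1-b}{c}$ and $x=-\frac{1+b}{c}$, reduce the generic count to the quantities $|C_{i,j}^q|$ of Lemma~\ref{lem-Cij}, and isolate the same boundary value $b=0$ (your $u=0$), which is exactly where the paper's three-solution count and its $M_3=1$ term come from. The only difference is bookkeeping: you assemble the $M_i$ by inclusion--exclusion over the three multiset components $A_0,A_1,A_2$, which absorbs the values $b=\pm1$ automatically, whereas the paper treats $b=0$ and $b=\pm1$ as explicit special cases (the latter contributing the $\de_{p,s}$ terms directly).
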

\begin{proof}
We only prove the $q \equiv 3 \bmod 4$ case and the rest is similar. When $c=0$, the multiplicity distribution is clear. When $c \ne 0$, we need to determine the number of solutions in $\Fq$ to
\begin{equation}\label{eqn-(q-1)/2}
x^{\frac{q-1}{2}}-cx=b,
\end{equation}
for each $b \in \Fq$. Note that $0$ is a solution in $\Fq$ to \eqref{eqn-(q-1)/2} if and only if $b=0$. Thus, we only need to consider the nonzero solutions in $\Fq$ to \eqref{eqn-(q-1)/2}, which is equivalent to
\begin{equation}\label{eqn-SNS}
\begin{cases}
x=\frac{1-b}{c}, & \mbox{if $x \in C_0^{(2,q)}$} \\
x=-\frac{1+b}{c}, & \mbox{if $x \in C_1^{(2,q)}$}
\end{cases}
\end{equation}
We first consider the case $c \in C_0^{(2,q)}$. For $b=0$, \eqref{eqn-(q-1)/2} has three solutions in $\Fq$. For $b\in\{\pm1\}$, we can see that \eqref{eqn-(q-1)/2} has $\de_{p,s}$ solution in $\Fq$. Consequently, by \eqref{eqn-SNS}, we have
\begin{align*}
M_0(x^{\frac{q-1}{2}},c)=|C_{1,1}^q|+2(1-\de_{p,s}), & \quad M_1(x^{\frac{q-1}{2}},c)=|C_{0,1}^q|+|C_{1,0}^q|+2\de_{p,s}, \\ M_2(x^{\frac{q-1}{2}},c)=|C_{0,0}^q|, & \quad M_3(x^{\frac{q-1}{2}},c)=1.
\end{align*}
The case $c \in C_1^{(2,q)}$ is also similar: for $b=0$, \eqref{eqn-(q-1)/2} has one solution in $\Fq$, and for $b\in\{\pm1\}$, we can see that \eqref{eqn-(q-1)/2} has $1-\de_{p,s}$ solutions in $\Fq$. Consequently, by \eqref{eqn-SNS}, we have
\begin{align*}
M_0(x^{\frac{q-1}{2}},c)=|C_{0,0}^q|+2\de_{p,s}, & \quad M_1(x^{\frac{q-1}{2}},c)=|C_{0,1}^q|+|C_{1,0}^q|+1+2(1-\de_{p,s}), \quad M_2(x^{\frac{q-1}{2}},c)=|C_{1,1}^q|.
\end{align*}
Applying Lemma~\ref{lem-Cij} completes the proof.
\end{proof}

The multiplicity distribution of $x^{\frac{q+1}{2}}$ can also be determined in a similar way.

\begin{proposition}
Let $q$ be a prime power. If $q \equiv 1 \bmod 4$, then
$$
\begin{cases}
M_1(x^{\frac{q+1}{2}},c)=q, & \mbox{if $c \in \{0\} \cup C_{0,0}^q \cup C_{1,1}^q$,}\\
M_0(x^{\frac{q+1}{2}},c)=\frac{q-1}{2}, \quad M_1(x^{\frac{q+1}{2}},c)=1, \quad M_2(x^{\frac{q+1}{2}},c)=\frac{q-1}{2}, & \mbox{if $c \in C_{0,1}^q \cup C_{1,0}^q$,} \\
M_0(x^{\frac{q+1}{2}},c)=\frac{q-1}{2}, \quad M_1(x^{\frac{q+1}{2}},c)=\frac{q-1}{2}, \quad M_{\frac{q+1}{2}}(x^{\frac{q+1}{2}},c)=1, & \mbox{if $c=\pm1$.}
\end{cases}
$$
If $q \equiv 3 \bmod 4$, then
$$
\begin{cases}
M_1(x^{\frac{q+1}{2}},c)=q, & \mbox{if $c \in C_{0,1}^q \cup C_{1,0}^q$,}\\
M_0(x^{\frac{q+1}{2}},c)=\frac{q-1}{2}, \quad M_1(x^{\frac{q+1}{2}},c)=1, \quad M_2(x^{\frac{q+1}{2}},c)=\frac{q-1}{2}, & \mbox{if $c \in \{0\} \cup C_{0,0}^q \cup C_{1,1}^q$,} \\
M_0(x^{\frac{q+1}{2}},c)=\frac{q-1}{2}, \quad M_1(x^{\frac{q+1}{2}},c)=\frac{q-1}{2}, \quad M_{\frac{q+1}{2}}(x^{\frac{q+1}{2}},c)=1, & \mbox{if $c=\pm1$.}
\end{cases}
$$
\end{proposition}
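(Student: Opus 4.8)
The plan is to exploit the factorization $x^{\frac{q+1}{2}} = x \cdot x^{\frac{q-1}{2}}$, together with the fact that $x^{\frac{q-1}{2}}$ equals the quadratic character $\chi(x) \in \{1,-1\}$ of $x$ for $x \in \Fq^*$ (while $0^{\frac{q+1}{2}}=0$). Thus $x^{\frac{q+1}{2}}-cx$ is piecewise linear: it equals $(1-c)x$ on the nonzero squares $C_0^{(2,q)}$, equals $-(1+c)x$ on the nonsquares $C_1^{(2,q)}$, and equals $0$ at $x=0$. Fixing $c \ne \pm1$ and $b \in \Fq$, I would count the solutions of $x^{\frac{q+1}{2}}-cx=b$ by treating the two linear branches separately: the square branch contributes the single candidate $x = b/(1-c)$, which is a genuine solution precisely when it is a nonzero square, and the nonsquare branch contributes $x = -b/(1+c)$, valid precisely when it is a nonsquare. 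For $b \ne 0$ these conditions translate into the character equalities $\chi(b) = \chi(1-c)$ and $\chi(b) = \chi(-1)\chi(1+c)$, respectively.

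The key step is to compare these two conditions. Writing $\chi(-1) = (-1)^{\frac{q-1}{2}}$, the two branches behave either in complementary or in identical fashion according to whether $\chi(1-c)$ and $\chi(1+c)$ agree, and the relevant comparison flips with $q \bmod 4$. Concretely: when exactly one branch is valid for every nonzero $b$, the map $x \mapsto x^{\frac{q+1}{2}}-cx$ is a bijection, giving $M_1 = q$; when for each $b$ either both branches or neither is valid, the $\frac{q-1}{2}$ values $b$ of one fixed character are each attained twice while those of the opposite character are missed, and $b=0$ is attained only by $x=0$, yielding $M_0 = M_2 = \frac{q-1}{2}$ and $M_1 = 1$. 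Sorting $c$ into the cyclotomic classes $C_{i,j}^q$, which by definition record $\chi(1\mp c)$ — and noting that $c=0$ falls into the ``both squares'' pattern since $1\pm 0 = 1$ — produces exactly the case division in the statement, with the roles of $C_{0,1}^q \cup C_{1,0}^q$ and $C_{0,0}^q \cup C_{1,1}^q$ interchanged between $q \equiv 1$ and $q \equiv 3 \bmod 4$.

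The degenerate values $c = \pm1$ must be handled separately. For $c=1$ the square branch collapses ($1-c=0$), so every element of $C_0^{(2,q)}$, together with $x=0$, is sent to $0$, while $x \mapsto -2x$ maps the nonsquares bijectively onto a single multiplicative coset; hence $0$ is attained $\frac{q+1}{2}$ times, $\frac{q-1}{2}$ nonzero values once, and the rest not at all, giving $M_{\frac{q+1}{2}} = 1$, $M_1 = \frac{q-1}{2}$, $M_0 = \frac{q-1}{2}$. The case $c=-1$ is symmetric with squares and nonsquares exchanged, and the outcome is independent of $q \bmod 4$.

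I expect the main obstacle to be the careful bookkeeping of the quadratic-character conditions, in particular tracking the sign $\chi(-1)$ so that the interchange of the ``permutation'' and ``two-to-one'' regimes between the two residues of $q$ modulo $4$ comes out correctly. Once the branch conditions are set up, the per-$c$ multiplicities follow from the elementary fact that exactly $\frac{q-1}{2}$ nonzero elements carry each quadratic character; in contrast to the earlier propositions, the explicit cardinalities of Lemma~\ref{lem-Cij} are therefore not needed here, only the membership of $c$ in the appropriate cyclotomic class.
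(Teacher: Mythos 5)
Your approach is essentially the paper's own: both reduce $x^{\frac{q+1}{2}}-cx=b$ to the two linear branches $(1-c)x=b$ on nonzero squares and $-(1+c)x=b$ on nonsquares, sort $c$ by membership in the classes $C_{i,j}^q$ to decide whether one, both, or neither branch yields a valid solution for each $b\ne 0$, and treat $c=\pm1$ (where one branch collapses) separately; all of your stated multiplicities agree with the proposition. One caveat in the bookkeeping you yourself flag as delicate: the validity condition for the nonsquare branch is $\chi(b)=-\chi(-1)\chi(1+c)$, not $\chi(b)=\chi(-1)\chi(1+c)$, the extra factor $-1$ coming from requiring the candidate $-b/(1+c)$ to be a \emph{non}square. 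Taken literally, the condition as you wrote it would interchange the roles of $C_{0,0}^q\cup C_{1,1}^q$ and $C_{0,1}^q\cup C_{1,0}^q$ in both congruence classes of $q$ modulo $4$, so the (correct) case division you conclude with only follows after this sign is repaired.
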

\begin{proof}
We only prove the $q \equiv 1 \bmod 4$ case and the rest is similar. Clearly, $M_1(x^{\frac{q+1}{2}},0)=q$. When $c \ne 0$, we can see that $0$ is an solution in $\Fq$ to
\begin{equation}\label{eqn-(q+1)/2}
x^{\frac{q+1}{2}}-cx=b
\end{equation}
if and only if $b=0$. Thus, we need to consider the number of nonzero solutions in $\Fq$ to $\eqref{eqn-(q+1)/2}$ for each $b \in \Fq$, which is equivalent to
\begin{equation}\label{eqn-SNS2}
\begin{cases}
(1-c)x=b, & \mbox{if $x \in C_0^{(2,q)}$,} \\
-(1+c)x=b, & \mbox{if $x \in C_1^{(2,q)}$.}
\end{cases}
\end{equation}
When $c=1$, if $b=0$, there are $\frac{q+1}{2}$ solutions in $\Fq$ to \eqref{eqn-(q+1)/2}. For $b \ne 0$, \eqref{eqn-(q+1)/2} has one solution if and only if $\frac{b}{2} \in C_1^{(2,q)}$. Therefore, we obtain the multiplicity distribution of $x^{\frac{q+1}{2}}$ at $1$. A similar approach applies to the $c=-1$ case. If $c \in C_{0,0}^q \cup C_{1,1}^q$, for each $b \in \Fq^*$, exactly one of two equations in \eqref{eqn-SNS2} has one solution. If $c \in C_{0,1}^q$, each of two equations in \eqref{eqn-SNS2} has one solution in $\Fq$ if $b \in C_0^{(2,q)}$ and none of them has solution in $\Fq$ if $b \in C_1^{(2,q)}$. Analogously, if $c \in C_{1,0}^q$, each of two equations in \eqref{eqn-SNS2} has one solution in $\Fq$ if $b \in C_1^{(2,q)}$ and none of them has solution in $\Fq$ if $b \in C_0^{(2,q)}$. Consequently, we complete the multiplicity distribution of $x^{\frac{q+1}{2}}$.
\end{proof}

Finally, we compute the multiplicity distribution of $x^{p^i+1}$, which is a direct consequence of \cite[Theorem 5.6]{Blu}. Recall that for a positive integer $i$, the number $l_2(i)$ is the largest nonnegative integer such that $2^{l_2(i)} \mid i$ and $l_2(0)=+\infty$.

\begin{proposition}\label{prop-Blu}
Let $q=p^s$ be a power of prime $p$. Let $0 \le i \le s-1$ be an integer with $h=\gcd(i,s)$. If $l_2(h)<l_2(s)$, then
$$
\begin{cases}
M_0(x^{p^i+1},0)=\frac{p^h(p^s-1)}{p^h+1}, \quad M_1(x^{p^i+1},0)=1, \quad M_{p^h+1}(x^{p^i+1},0)=\frac{p^s-1}{p^h+1}, & \\
M_0(x^{p^i+1},c)=\frac{p^{s+h}-p^h}{2(p^h+1)}, \quad M_1(x^{p^i+1},c)=p^{s-h}, \quad M_2(x^{p^i+1},c)=\frac{p^{s+h}-2p^s+p^h}{2(p^h-1)}, & \\ M_{p^h+1}(x^{p^i+1},c)=\frac{p^{s-h}-p^h}{p^{2h}-1}, & \mbox{if $c \ne 0$}.
\end{cases}
$$
If $p=2$ and $l_2(h) \ge l_2(s)$, then
$$
\begin{cases}
M_1(x^{p^i+1},0)=p^s, & \\
M_0(x^{p^i+1},c)=\frac{p^{s+h}+p^h}{2(p^h+1)}, \quad M_1(x^{p^i+1},c)=p^{s-h}-1, \quad M_2(x^{p^i+1},c)=\frac{p^{s+h}-2p^s+p^h}{2(p^h-1)}, & \\ M_{p^h+1}(x^{p^i+1},c)=\frac{p^{s-h}-1}{p^{2h}-1}, & \mbox{if $c \ne 0$}.
\end{cases}
$$
If $p$ is odd and $l_2(h) \ge l_2(s)$, then
$$
\begin{cases}
M_0(x^{p^i+1},0)=\frac{p^s-1}{2}, \quad M_1(x^{p^i+1},0)=1, \quad M_{2}(x^{p^i+1},0)=\frac{p^s-1}{2}, & \\
M_0(x^{p^i+1},c)=\frac{p^{s+h}-1}{2(p^h+1)}, \quad M_1(x^{p^i+1},c)=p^{s-h}, \quad M_2(x^{p^i+1},c)=\frac{p^{s+h}-2p^s+1}{2(p^h-1)}, & \\ M_{p^h+1}(x^{p^i+1},c)=\frac{p^{s-h}-1}{p^{2h}-1}, & \mbox{if $c \ne 0$}.
\end{cases}
$$
\end{proposition}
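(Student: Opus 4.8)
The plan is to reduce the computation of $(M_j(x^{p^i+1},c))_j$ to root‑counting: by the definition of the multiplicity distribution, $M_j(x^{p^i+1},c)$ is precisely the number of $b\in\Fq$ for which the trinomial equation $x^{p^i+1}-cx=b$ has exactly $j$ solutions in $\Fq$. The case $c=0$ is elementary, since $x^{p^i+1}=b$ is a pure power map: for $b\ne 0$ the number of solutions is either $0$ or $e:=\gcd(p^i+1,q-1)$, while $b=0$ always contributes one solution. A routine gcd computation, organized by whether $s/h$ is even or odd (equivalently whether $l_2(h)<l_2(s)$ or not) and by the parity of $p$, gives $e=p^h+1$ when $l_2(h)<l_2(s)$, $e=2$ when $l_2(h)\ge l_2(s)$ and $p$ is odd, and $e=1$ when $l_2(h)\ge l_2(s)$ and $p=2$. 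Counting the nonzero $(p^h+1)$‑th powers (respectively nonzero squares, respectively all nonzero elements) then yields the three displayed distributions at $c=0$.

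For $c\ne 0$ the key preliminary observation is that the distribution is independent of the particular nonzero $c$. Substituting $x=\mu y$ with $\mu\in\Fq^*$ gives $x^{p^i+1}-cx=\mu^{p^i+1}\bigl(y^{p^i+1}-c\mu^{-p^i}y\bigr)$; since $\mu\mapsto\mu^{-p^i}$ permutes $\Fq^*$, the scalar $c\mu^{-p^i}$ ranges over all of $\Fq^*$, and the bijection $x=\mu y$ only relabels the right‑hand side $b$ (via $b\mapsto\mu^{p^i+1}b$), leaving fiber sizes unchanged. Hence $M_j(x^{p^i+1},c)=M_j(x^{p^i+1},c')$ for all nonzero $c,c'$, so it suffices to treat a single convenient representative.

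With $c$ fixed and nonzero, I would invoke \cite[Theorem 5.6]{Blu}, which records exactly how many $b$ make $x^{p^i+1}-cx-b$ split with a prescribed number of roots. In every regime the admissible numbers of roots lie in $\{0,1,2,p^h+1\}$, and the theorem supplies the frequency of each. Translating these frequencies into the notation $M_0,M_1,M_2,M_{p^h+1}$, and separating the three cases according to $l_2(h)$ versus $l_2(s)$ and the parity of $p$ (which is precisely the case division appearing in \cite[Theorem 5.6]{Blu}), produces the stated formulas. The identities $\sum_j M_j=q$ and $\sum_j jM_j=q$ then serve as an independent numerical check.

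The main obstacle I anticipate is bookkeeping rather than conceptual difficulty: aligning the normalization of the trinomial in \cite[Theorem 5.6]{Blu} with the form $x^{p^i+1}-cx-b$ (absorbing the scaling of the previous paragraph into the choice of representative), and carefully reading off the three separate frequency counts in the regimes $l_2(h)<l_2(s)$, $(l_2(h)\ge l_2(s),\ p=2)$, and $(l_2(h)\ge l_2(s),\ p\text{ odd})$, across which both $\gcd(p^i+1,q-1)$ and the quadratic structure underlying Blu's count change. It is worth stressing that a naive linear‑algebra argument will not suffice here: the fibers of $x\mapsto x^{p^i+1}-cx$ can have size $2$ or $p^h+1$, which are not powers of $p$ in general, so the solution sets are genuinely not additive cosets, and this is exactly why the combinatorial count of \cite[Theorem 5.6]{Blu} is needed.
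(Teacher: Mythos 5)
Your proposal is correct and follows essentially the same route as the paper: the $c=0$ case via $\gcd(p^i+1,q-1)$, and for $c\ne 0$ a scaling substitution that normalizes $x^{p^i+1}-cx-b$ into the one-parameter family covered by Bluher's Theorem 5.6, whose root-frequency counts are then read off as the $M_j$'s (the paper does this in one step with $x=-\tfrac{b}{c}y$, which simultaneously plays the role of your independence-of-$c$ reduction and your $b$-normalization). The only bookkeeping point worth making explicit is that Bluher's counts $N_j$ run over $b\ne 0$, and the fiber over $b=0$ has exactly two points, so $M_2=N_2+1$ while $M_j=N_j$ otherwise.
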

\begin{proof}
We only prove the $l_2(h)<l_2(s)$ case and the rest is similar. When $c=0$, noting that $(p^i+1,p^s-1)=p^h+1$, we immediately get the multiplicity distribution of $x^{p^i+1}$ at $0$. When $c \ne 0$, we need to compute the number of solutions in $\Fq$ to
\begin{equation}\label{eqn-p^i+1}
x^{p^i+1}-cx-b=0.
\end{equation}
If $b=0$, then \eqref{eqn-p^i+1} has two solutions. If $b \ne 0$, replacing $x$ with $-\frac{b}{c}y$, we have
\begin{equation}\label{eqn-p^i+1new}
y^{p^i+1}+(-1)^{p^i+1}\frac{c^{p^i+1}}{b^{p^i}}y-(-1)^{p^i+1}\frac{c^{p^i+1}}{b^{p^i}}=0.
\end{equation}
For $0 \le i \le p^h+1$, let $N_i$ be the number of $b \in \Fq^*$, such that \eqref{eqn-p^i+1new} has $i$ solutions in $\Fq$. By \cite[Theorem 5.6]{Blu}, we have
$$
N_0=\frac{p^{s+h}-p^h}{2(p^h+1)}, \quad N_1=p^{s-h}, \quad N_2=\frac{(p^h-2)(p^s-1)}{2(p^h-1)}, \quad N_{p^h+1}=\frac{p^{s-h}-p^h}{p^{2h}-1}.
$$
Together with the $b=0$ case, we derive the multiplicity distribution.
\end{proof}

\section*{Acknowledgement}

Shuxing Li is supported by the Alexander von Humboldt Foundation. The authors wish to thank Keith Mellinger for supplying a copy of \cite{DM2} and the anonymous reviewers for their very detailed and helpful comments.

\end{document}